\theoremstyle{plain}
\newtheorem*{thm*}{Theorem}
\newtheorem{theorem}{Theorem}[section]
\Crefname{theorem}{Theorem}{Theorems}
\newtheorem*{lem*}{Lemma}
\newtheorem{lemma}[theorem]{Lemma}
\Crefname{lemma}{Lemma}{Lemmas}
\newtheorem*{claim*}{Claim}
\newtheorem{claim}[theorem]{Claim}
\crefname{claim}{Claim}{Claims}
\Crefname{claim}{Claim}{Claims}
\newtheorem{prop}[theorem]{Proposition}
\Crefname{prop}{Proposition}{Propositions}
\newtheorem{fact}[theorem]{Fact}
\Crefname{fact}{Fact}{Facts}
\newtheorem{corollary}[theorem]{Corollary}
\crefname{corollary}{Corollary}{Corollaries}
\newtheorem{conj}[theorem]{Conjecture}
\crefname{conj}{Conjecture}{Conjectures}
\newtheorem*{conj*}{Conjecture}
\newtheorem{qn}[theorem]{Question}
\Crefname{qn}{Question}{Questions}
\Crefname{obs}{Observation}{Observations}
\Crefname{ex}{Example}{Examples}
\theoremstyle{definition}
\newtheorem{prob}[theorem]{Problem}
\Crefname{prob}{Problem}{Problems}
\Crefname{defn}{Definition}{Definitions}
\newtheorem*{defn*}{Definition}
\theoremstyle{remark}
\renewenvironment{proof}[1][]{\begin{trivlist}
\item[\hspace{\labelsep}{\bf\noindent Proof#1.\/}] }{\qed\end{trivlist}}
\newcommand{\remove}[1]{}
\newcommand{\cM}{\mathcal{M}}
\newcommand{\cP}{\mathcal{P}}
\newcommand{\cK}{\mathcal{K}}
\renewcommand{\setminus}{-}
\newcommand{\bR}{\mathbb{R}}
\newcommand{\bN}{\mathbb{N}}
\newcommand{\ceil}[1]{\left\lceil #1 \right\rceil}
\newcommand{\floor}[1]{\left\lfloor #1 \right\rfloor}
\newcommand{\comp}[1]{\overline{#1}}
\newcommand{\one}{\mathbbm{1}}
\newcommand{\eps}{\varepsilon}
\def\moverlay{\mathpalette\mov@rlay}
\def\mov@rlay#1#2{\leavevmode\vtop{%
   \baselineskip\z@skip \lineskiplimit-\maxdimen
   \ialign{\hfil$\m@th#1##$\hfil\cr#2\crcr}}}
\newcommand{\charfusion}[3][\mathord]{
    #1{\ifx#1\mathop\vphantom{#2}\fi
        \mathpalette\mov@rlay{#2\cr#3}
      }
    \ifx#1\mathop\expandafter\displaylimits\fi}
\newcommand{\cupdot}{\charfusion[\mathbin]{\cup}{\cdot}} 
\newcommand{\N}{\mathbb{N}}
\DeclareMathOperator{\TF}{TF}
\DeclareMathOperator{\TK}{TK}
\newcommand{\optionaldesc}[2]{%
  \phantomsection
  #1\protected@edef\@currentlabel{#1}\label{#2}%
}
\DeclareMathOperator{\supp}{supp}
\def \Gref {\Gamma_{\mathrm{ref}}}
\title{Packing subgraphs in regular graphs}
\author{Shoham Letzter\thanks{Department of Mathematics, University College London, Gower Street, London, WC1E 6BT, U.K. Research supported by the Royal Society. Email:~\textbf{s.letzter@ucl.ac.uk}.} \and Abhishek Methuku\thanks{Department of Mathematics, University of Illinois at Urbana–Champaign, Urbana, IL, USA. Research supported by the UIUC Campus Research Board Award RB25050. Email:~\textbf{abhishekmethuku@gmail.com}} \and Benny Sudakov\thanks{Department of Mathematics, ETH, Z\"urich, Switzerland. Research supported in part by SNSF grant 200021-228014. Email:~\textbf{benjamin.sudakov@math.ethz.ch}}}
\date{}
\begin{document}

\maketitle
\begin{abstract}
    \noindent An \emph{$H$-packing} in a graph $G$ is a collection of pairwise vertex-disjoint copies of $H$ in $G$. We prove that for every $c > 0$ and every bipartite graph $H$, any $\floor{cn}$-regular graph $G$ admits an $H$-packing that covers all but a constant number of vertices. This resolves a problem posed by Kühn and Osthus in 2005. Moreover, our result is essentially tight: the conclusion fails if $G$ is not both regular and sufficiently dense, it is in general not possible to guarantee covering all vertices of $G$ by an $H$-packing, and if $H$ is non-bipartite then $G$ need not contain any copies of $H$.

	We also prove that for all $c > 0$, integers $t \geq 2$, and sufficiently large $n$, all the vertices of every $\lfloor cn \rfloor$-regular graph can be covered by vertex-disjoint subdivisions of $K_t$. This resolves another problem of K\"uhn and Osthus from 2005, which goes back to a conjecture of Verstra\"ete from 2002. 

	Our proofs combine novel methods for balancing expanders and super-regular subgraphs with a number of powerful techniques including properties of robust expanders, regularity lemma, and blow-up lemma. 
\end{abstract}

\section{Introduction}

	Given two graphs $F$ and $G$, an $F$-\emph{packing} (or an $F$-\emph{tiling}) in $G$ is a collection of pairwise vertex-disjoint copies of $F$ in $G$. An $F$-packing in $G$ is called \emph{perfect} if it covers all the vertices of $G$. Note that when $F$ consists of a single edge, an $F$-packing is a graph matching. Tutte’s theorem characterizes those graphs which have a perfect $F$-packing if $F$ is an edge, but no such characterisation is known for other connected graphs $F$. In fact, it is known~\cite{kirkpatrick1983complexity} that the decision problem of whether a graph $G$ has a perfect $F$-packing is NP-complete if and only if $F$ has a component which contains at least three vertices. So it is natural to seek simple sufficient conditions that guarantee the existence of a perfect $F$-packing in a given graph. 

	A fundamental result in this area is the Hajnal--Szemer\'edi theorem~\cite{hajnal1970proof} from 1970 which states that every graph whose order $n$ is divisible by $t$ and whose minimum degree is at least $(1 - \frac{1}{t})n$ contains a perfect $K_t$-packing. For arbitrary graphs, Koml\'os, S\'ark\"ozy and Szemer\'edi~\cite{komlos2001proof} showed that for every graph $F$, there exists a constant $C = C(F)$ such that every graph $G$ whose order $n$ is divisible by $|V(F)|$ and whose minimum degree is at least $(1- \frac{1}{\chi(F)})n + C$ contains a perfect $F$-packing. It turns out that there are graphs $F$ for which the bound on the minimum degree can be improved significantly i.e., one can often replace $\chi(F)$ by a smaller parameter called the \emph{critical chromatic number} $\chi_{cr}(F)$ of $F$, which is defined as  $\chi_{cr}(F) \coloneqq (\chi(F)-1) \frac{|F|}{|F|-\sigma(F)}$. Here $\sigma(F)$ denotes the minimum size of the smallest colour class in an optimal colouring of $F$. It is easy to see that $\chi(F) - 1 < \chi_{cr}(F) \le \chi(F)$. In 2000, Koml\'os~\cite{komlos2000tiling} showed that the critical chromatic number is the parameter that governs the existence of almost perfect packings in graphs of large minimum degree. Finally, in 2009, K\"uhn and Osthus~\cite{kuhn2009minimum} determined, up to an additive constant, the minimum degree of a graph $G$ that ensures the existence of a perfect $F$-packing in $G$, for every graph $F$. More precisely, they proved that for every graph $F$, either its critical chromatic number or its chromatic number is the parameter which governs the existence of perfect $F$-packings in graphs of large minimum degree (where the exact classification depends on a parameter called the \emph{highest common factor} of $F$).

	\subsection{$F$-packings in dense regular graphs}

		In view of the above results, rather surprisingly, K\"uhn and Osthus~\cite{kuhn2005packings} showed that if we restrict our attention to packings in regular graphs, then \emph{any} linear bound on the degree guarantees an almost perfect $F$-packing. 

		\begin{theorem}[K\"uhn and Osthus~\cite{kuhn2005packings}]
			\label{thm:almostregpackingKO}
			For every bipartite graph $F$ and every $0 < c, \alpha \le 1$, there exists $n_0 = n_0(F, c, \alpha)$ such that every $d$-regular graph $G$ of order $n$, where $d \ge cn$ and $n \ge n_0$, has an $F$-packing that covers all but at most $\alpha n$ vertices of $G$.
		\end{theorem}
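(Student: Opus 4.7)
The strategy is to apply Szemer\'edi's regularity lemma, exploit $d$-regularity of $G$ to produce a nearly perfect fractional matching in the reduced graph, and then realise an $H$-packing inside the integer matching edges and odd cycles obtained from half-integral rounding by invoking the blow-up lemma.

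\emph{Step 1 (Regularity lemma; weighted degrees in $R$).} Pick constants $0 < \eps \ll \eta \ll \alpha, c, 1/|V(H)|$ and apply Szemer\'edi's regularity lemma to $G$, obtaining a partition $V_0, V_1, \ldots, V_k$ with $|V_0| \le \eps n$ and $|V_1| = \cdots = |V_k| = m$. Form the weighted reduced graph $R$ on $[k]$ with $ij \in E(R)$ iff $(V_i, V_j)$ is $\eps$-regular of density at least $\eta$, and edge weight $w(ij) = d(V_i, V_j)$. Counting edges leaving each cluster and using $d = \lfloor cn\rfloor$ yields
\[
\sum_{j\colon ij\in E(R)} w(ij)\;\ge\;(c - O(\sqrt\eps + \eta))\,k \qquad\text{for all } i\in[k],
\]
after moving into $V_0$ the $O(\sqrt\eps\, k)$ clusters that have atypically many irregular neighbours.

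\emph{Step 2 (Half-integral near-perfect fractional matching).} Setting $x_{ij} = w(ij)/(ck)$ defines a fractional matching on $R$ whose value is at least $(1-O(\sqrt\eps + \eta))\,k/2$. By half-integrality of the fractional matching polytope (Balinski / Nemhauser--Trotter), there is an optimum fractional matching whose support decomposes into an integer matching $M$ together with a family $\mathcal{C}$ of pairwise vertex-disjoint odd cycles; the set $U\subseteq[k]$ of clusters outside $V(M)\cup V(\mathcal{C})$ has size $O((\sqrt\eps + \eta)\,k)$.

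\emph{Step 3 (Balanced tilings through the blow-up lemma).} Let $A,B$ be the parts of $H$ with $|A|=a\le b=|B|$ and $h=a+b$. For each matched edge $ij\in M$ we plan $m/h$ copies of $H$ with $A\subseteq V_i$, $B\subseteq V_j$, together with $m/h$ copies with $B\subseteq V_i$, $A\subseteq V_j$; this uses exactly $m$ vertices in each of $V_i, V_j$. For each odd cycle $c_1c_2\ldots c_{2\ell+1}\in\mathcal{C}$ we plan $m/(2h)$ copies of each orientation along every edge $c_jc_{j+1}$; a direct cluster-by-cluster count shows each cluster of the cycle is again exactly covered. Trimming $O(\eps m)$ vertices per cluster makes the relevant $\eps$-regular pairs super-regular, and then the blow-up lemma of Koml\'os--S\'ark\"ozy--Szemer\'edi realises both plans inside $G$.

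\emph{Accounting and main obstacle.} The uncovered vertices arise from $V_0$, the clusters of $U$, the $O(\eps m)$ vertices discarded during super-regularisation, and $O(1)$ divisibility remainders per cluster, giving a total of $O((\sqrt\eps + \eta)\,n)\le \alpha n$ once $\eps,\eta$ are chosen small enough in terms of $\alpha, c, |V(H)|$. The main difficulty is Step~2: it is precisely the $d$-regularity of $G$ (rather than a mere minimum degree condition) that forces near-balanced weighted degrees on $R$ and hence a nearly perfect fractional matching. A secondary subtlety is that for odd cycles one must pack both orientations of $H$ simultaneously in each pair, since using a single orientation would leave a positive fraction of each cluster uncovered.
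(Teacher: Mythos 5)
Your proposal is essentially correct. Note first that the paper does not prove \Cref{thm:almostregpackingKO} at all --- it is quoted from K\"uhn--Osthus~\cite{kuhn2005packings} as background --- so you are supplying a self-contained argument rather than reproducing one. Your pipeline (regularity lemma $\to$ near-perfect half-integral fractional matching in the reduced graph $\to$ edges plus odd cycles $\to$ split clusters, super-regularise, blow-up lemma) is sound, and it is in fact very close in spirit to the machinery the paper deploys for its much stronger \Cref{lem:Ktt-packing-expander}: there too a fractional matching in the reduced graph is converted into a perfect $2$-matching, the clusters are halved via the $2$-lift $\Gref$ to turn odd cycles into a genuine matching, and the blow-up lemma finishes. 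The difference is quantitative and structural: you extract only a \emph{nearly} perfect fractional matching directly from the $d$-regularity of $G$, which costs $O((\sqrt{\eps}+\eta)n)$ uncovered vertices and therefore suffices for the $\alpha n$ error term but cannot reach the $O(1)$ error of \Cref{mainthm:packingsubgraphs}; the paper instead first decomposes $G$ into balanced expanders and uses their Hamiltonicity to get a \emph{perfect} fractional matching, plus the collections $\cK_1,\cK_2,\cK_3$ to absorb the exceptional set and fix divisibility and balance exactly.

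Two small points to tidy up. First, your normalisation $x_{ij}=w(ij)/(ck)$ need not satisfy the fractional matching constraint $\sum_j x_{ij}\le 1$ when $d>cn$; take instead $x_{ij}=e(V_i,V_j)/(dm)$, so that the maximum degree $d$ gives $\sum_j x_{ij}\le 1$ exactly and the minimum degree $d$ gives $\sum_j x_{ij}\ge 1-O(\sqrt{\eps}+\eta)$ --- this is where regularity of $G$ (both bounds, not just the lower one) is used. Second, for the odd cycles you should say explicitly that each cluster on a cycle is split into two halves, one per incident cycle edge, and that the resulting half-pairs remain ($2\eps$-)regular of essentially the same density before super-regularising; without this split the copies of $H$ assigned to the two edges at a cluster would not be vertex-disjoint. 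Neither point affects the validity of the approach.
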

	
		It is easy to see that the restriction to (sufficiently) regular graphs $G$ in Theorem~\ref{thm:almostregpackingKO} is necessary. Indeed, if $G$ is, say, a complete bipartite graph $K_{n/4, 3n/4}$ and $F$ is an edge, then it is clearly impossible to find an $F$-packing covering almost all the vertices of $G$. The restriction to bipartite graphs $F$ in Theorem~\ref{thm:almostregpackingKO} is also necessary because if $c \le 1/2$, then $G$ could be a bipartite graph (in which case it cannot contain any subgraph $F$ which is non-bipartite).

		This raises the natural question of whether the bound $\alpha n$ on the number of uncovered vertices in Theorem~\ref{thm:almostregpackingKO} can be lowered significantly to obtain an $F$-packing which is close to being perfect. Indeed, K\"uhn and Osthus~\cite{kuhn2005packings} proposed the following general problem in 2005.

		\begin{prob}[K\"uhn and Osthus~\cite{kuhn2005packings}]
			\label{Problem1KO}
			Is it true that, for every bipartite graph $F$ and every $0 < c \le 1$, there is a constant $C = C(F, c)$ such that every $d$-regular graph $G$ of order $n$, where $d \ge cn$, has an $F$-packing that covers all but at most $C$ vertices of $G$?
		\end{prob}

		K\"uhn and Osthus~\cite{kuhn2005packings} resolved Problem~\ref{Problem1KO} in the special case when the parts of $F$ have unequal sizes. More precisely, they showed the following.

		\begin{theorem}[K\"uhn and Osthus~\cite{kuhn2005packings}]
			\label{KOdifferentvertexclasses}
			For every bipartite graph $F$ whose vertex classes have different size and for every $0 < c \le 1$, there is a constant $C = C(F, c)$ such that every $d$-regular graph $G$ of order $n$, where $d \ge cn$, has an $F$-packing which covers all but at most $C$ vertices of $G$.
		\end{theorem}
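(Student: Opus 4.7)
The theorem hinges on the asymmetry of $H$: denote the two vertex classes of $H$ by sizes $a < b$. Every copy of $H$ inside a bipartite host can then be placed in two orientations, contributing $a$ vertices to one side and $b$ to the other, or vice versa. This two-orientation flexibility is absent from the balanced case and is exactly what allows the leftover set to be reduced from the $\alpha n$ of \Cref{thm:almostregpackingKO} down to a constant.

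My approach combines the Regularity Lemma with an absorbing argument. I would first apply Szemerédi's Regularity Lemma with a small parameter $\eps$ to $G$, obtaining a partition $V_0, V_1, \ldots, V_k$ and a reduced graph $\Gamma$; averaging shows that $\Gamma$ has minimum degree $(c - o(1))k$, so by Dirac's theorem $\Gamma$ contains a perfect matching. Each matching edge yields a super-regular bipartite pair of comparable sizes in $G$, and together these pairs cover all but $O(\eps n)$ vertices. Within each pair $(X_i, Y_i)$, an $H$-tiling using $k_1$ copies of one orientation and $k_2$ of the other requires $ak_1 + bk_2 = |X_i|$ and $bk_1 + ak_2 = |Y_i|$; since $a \neq b$, this linear system has a non-negative integer solution whenever $|X_i|, |Y_i|$ satisfy two divisibility conditions modulo $a+b$ and $b-a$, which can be arranged by an initial redistribution of at most a constant number of vertices into a reservoir. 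The Blow-up Lemma then produces an exact $H$-tiling of each pair.

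The heart of the argument is absorption, and this is where the unequal-parts hypothesis is truly essential. Before carrying out the main tiling, I would reserve a small random set $A \subseteq V(G)$ of size $o(n)$ together with a family of absorbing gadgets inside $A$: each gadget is a small $H$-packable subgraph with a distinguished "slot" vertex $v$ that admits two $H$-packings whose covered sets differ precisely in whether $v$ is used. Such gadgets exist exactly because $a \neq b$: two copies of $H$ glued along a well-chosen pair of vertices admit two packings with identical base footprints, one of which additionally covers $v$. A probabilistic argument, using that $G$ has minimum degree $cn$, then lets us pick $A$ so that every vertex of $G \setminus A$ is the slot of many vertex-disjoint gadgets inside $A$, whence any leftover set $L$ of size at most $\eps^2 n$ can be absorbed with only $C = C(H,c)$ vertices uncovered in total. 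The principal obstacle is the construction of these absorbers in a merely regular host (rather than one of very high minimum degree), and the unequal-parts assumption is exactly what makes them possible — the balanced case, which reduces to \Cref{Problem1KO} in full generality, is the central difficulty to be overcome later in the paper.
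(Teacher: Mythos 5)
First, a point of comparison: the paper does not prove this statement at all --- it is quoted from K\"uhn and Osthus~\cite{kuhn2005packings} --- so there is no in-paper proof to match. Judged on its own merits, your proposal contains two genuine gaps.

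The first is the perfect matching in the reduced graph. Dirac's theorem requires minimum degree at least $k/2$, whereas you only have $(c-o(1))k$ and $c$ may be far below $1/2$; more fundamentally, the reduced graph of a dense regular graph need not have a perfect matching at all. If $G$ is a disjoint union of cliques of order $d+1$, the reduced graph is a disjoint union of complete graphs which may all have an odd number of clusters, and then no matching covers even a $(1-\eps)$-fraction of them (a union of triangles already caps you at two thirds). This is exactly why the paper, for its more general \Cref{mainthm:packingsubgraphs}, first decomposes $G$ into expanders, extracts a perfect \emph{fractional} matching of the reduced graph from Hamiltonicity of robust expanders, converts it to a perfect $2$-matching (edges plus odd cycles, \Cref{fact:fractional-matching}), and then passes to the $2$-lift of the reduced graph to obtain a genuine perfect matching. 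Some substitute for this machinery is unavoidable; the Dirac shortcut does not exist.

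The second gap is the absorbing gadget, which cannot exist as described. Every $H$-packing covers a multiple of $|V(H)| = a+b \ge 3$ vertices, so a gadget admitting two $H$-packings whose covered sets differ in exactly one vertex $v$ is arithmetically impossible. Absorbers for $H$-tilings must exchange whole $(a+b)$-sets of prescribed structure, which is a much more delicate construction, and your proposal says nothing about how a leftover of size $\eps^2 n$ would be grouped into such sets. The actual arguments (both in~\cite{kuhn2005packings} and in this paper's \Cref{lem:Ktt-packing-expander}) avoid absorption altogether: exceptional vertices are covered greedily, one copy of $H$ (or $K_{t,t}$) through each exceptional vertex into the regular pairs via the K\H{o}v\'ari--S\'os--Tur\'an theorem, and the residual imbalances and divisibility defects of the super-regular pairs are repaired by further small packings before the blow-up lemma is applied. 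Your one sound observation is that when $a \neq b$ the two orientations of $H$ allow an exact tiling of a super-regular pair whose sides differ slightly in size --- that is indeed where the unequal-classes hypothesis enters --- but the surrounding architecture does not work as written.
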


		Despite extensive research on graph packings, this problem (Problem~\ref{Problem1KO}) remained open for nearly twenty years. Our main result is a complete resolution of Problem~\ref{Problem1KO} as follows. 

		\begin{theorem}
			\label{mainthm:packingsubgraphs}
			For every bipartite graph $F$ and every $0 < c \le 1$, there is a constant $C = C(F, c)$ such that every $d$-regular graph $G$ of order $n$, where $d \ge cn$, has an $F$-packing that covers all but at most $C$ vertices of $G$.
		\end{theorem}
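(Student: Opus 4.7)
The plan is to combine Szemer\'edi's regularity lemma and the blow-up lemma with a new balancing argument and an absorbing scheme. By Theorem~\ref{KOdifferentvertexclasses}, I may assume $H$ is balanced bipartite with parts of size $h$, so each copy of $H$ contributes exactly $h$ vertices per side; the heart of the problem is to align global vertex counts with this rigid structure. First, apply the regularity lemma to $G$ to obtain an $\varepsilon$-regular partition with clusters $V_1,\dots,V_k$ and a small exceptional set $V_0$. Since $G$ is $d$-regular with $d \ge cn$, the reduced graph $R$ is dense and nearly $c'k$-regular, hence a robust expander. This lets me find a spanning structure in $R$ (a near-perfect matching or a $2$-factor, say) whose edges correspond to pairs of clusters that, after a small thinning, become super-regular bipartite pairs $(X_i,Y_i)$.

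The central new ingredient is a \emph{balancing} step: the blow-up lemma will produce a perfect $H$-packing in the pair $(X_i,Y_i)$ only if $|X_i|=|Y_i|$ is divisible by $h$. To achieve this for all pairs simultaneously, I would route the discrepancies through $R$ via short shifting walks: moving a single vertex from one cluster to a neighbouring cluster along an edge of $R$ changes two cluster sizes by $\pm 1$, and by composing many such moves along paths in $R$ one can cancel any initial imbalance. The robust expansion of $R$ provides enough connectivity to transport all discrepancies, while the $\varepsilon$-regularity of $G$ ensures that each moved vertex has the correct typical-degree behaviour in its new cluster, so the pairs remain super-regular after the reassignment.

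Alongside this, I would set aside an \emph{absorber} $A$ at the very start: a small subgraph such that for every sufficiently small $S \subseteq V(G) - A$ with $|S|$ divisible by $2h$, the set $A \cup S$ admits a perfect $H$-packing. The absorber is built by finding many disjoint flexible $H$-gadgets, each able to substitute an arbitrary small vertex set while preserving the bipartite balance required by $H$; the density of $G$ guarantees enough such gadgets. Applying the blow-up lemma to the balanced super-regular pairs covers almost all vertices of $V(G) - A$, and the few remaining vertices---together with $V_0$ and those displaced during balancing---are absorbed by $A$, leaving only the $O(1)$ residue forced by the divisibility of $n$ by $|V(H)|$. The main obstacle I anticipate is the balancing step: in the balanced bipartite case there is no slack from unequal part sizes (which was the key freedom exploited in Theorem~\ref{KOdifferentvertexclasses}), so balancing must rely entirely on the expansion of $R$ and must simultaneously fix all $k$ pair imbalances while preserving super-regularity and moving only a negligible number of vertices; a secondary technical difficulty is maintaining the correct bipartite parity of the final absorbed set.
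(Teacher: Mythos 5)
Your reduction to balanced bipartite $H$ via Theorem~\ref{KOdifferentvertexclasses} is fine (the paper instead reduces to $K_{t,t}$, which is equivalent in spirit), but the first substantive step of your plan already fails: applying the regularity lemma directly to $G$ and asserting that the reduced graph $R$ is a robust expander because it is "dense and nearly regular" is not justified. A $d$-regular graph with $d \ge cn$ can have very sparse cuts (e.g.\ two $d$-regular pieces joined by $o(n^2)$ edges), in which case $R$ inherits the sparse cut and is not a robust expander. This is precisely why the paper's proof begins with the expander decomposition (Lemma~\ref{lem:expander-decomposition}), partitioning $V(G)$ into a bounded number of pieces $Z_1,\ldots,Z_r$ that individually have no sparse cuts, and only then runs regularity-type arguments inside each piece.

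The more serious gap is in your balancing step, which is where the whole difficulty of the theorem lives. The genuine obstruction is an expander $G[Z]$ that is close to an \emph{imbalanced} bipartite graph with parts $X,Y$ and $|X|-|Y| = \Omega(n)$ (this is consistent with $G$ being regular, since the imbalance can be compensated by edges inside $X$ and edges leaving $Z$). Every copy of a balanced $H$ placed across the bipartition uses equally many vertices of $X$ and $Y$, so to leave only $O(1)$ vertices uncovered one must consume the excess $|X|-|Y|$ using the few edges inside $X$ or the few edges leaving $Z$. Your shifting walks cannot do this: a vertex of $X$ has essentially no neighbours in $X$, so it cannot be relocated to a cluster sitting on the $X$-side of a super-regular pair while remaining typical, and in any case the discrepancy to be transported is linear, not a bounded divisibility defect. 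The paper resolves this with an exact identity forced by $d$-regularity, namely $d(|X|-|Y|) = e(X,\overline{Z}) - e(Y,\overline{Z}) + 2e(X) - 2e(Y)$, and constructs a $K_{t,t}$-packing whose sizes are proportional to these edge counts (Lemma~\ref{lem:balancing}); nothing in your proposal plays this role. Finally, your absorber cannot exist as stated: if $G$ itself is bipartite, no set $A$ can absorb an \emph{arbitrary} small $S$, since $A\cup S$ admits a perfect packing by balanced bipartite $H$ only if $S$ respects the bipartition of $G$ up to the right balance, and you give no construction of the "flexible gadgets" in a general dense regular graph. The paper avoids absorbers altogether, instead correcting divisibility and balance with explicitly constructed small $K_{t,t}$-packings ($\cK_2$ and the matching-template packing $\cK_3$) before invoking the blow-up lemma.
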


		As noted in~\cite{kuhn2005packings}, it is necessary to allow for uncovered vertices in \Cref{mainthm:packingsubgraphs}, and the upper bound $C$ on the number of uncovered vertices must depend on $c$ and $F$ (even if $n$ is divisible by $|V(F)|$), in sharp contrast to the result of Koml\'os, S\'ark\"ozy and Szemer\'edi~\cite{komlos2001proof} mentioned earlier. Indeed, for instance, if $G$ is the vertex-disjoint union of cliques of order $k|V(F)|-1 = d+1$ (for some positive integer $k$), then $G$ is $d$-regular, and it is easy to see that we must have at least $(|V(F)|-1) \cdot \frac{n}{d+1}$ uncovered vertices in any $F$-packing of $G$. Moreover, this example also shows that $d$ must be linear in $n$ in order for the number of uncovered vertices to be bounded by a constant $C$, so the requirement that the vertex degrees of $G$ are linear in $n$ in Theorem~\ref{mainthm:packingsubgraphs} is also necessary. 

		It is easy to reduce \Cref{mainthm:packingsubgraphs} to the special case when $F$ is a complete bipartite graph $K_{t,t}$. This follows, for example, from the fact that any bipartite graph $F$ can be perfectly packed into the complete bipartite graph $K_{|F|,|F|}$. To prove \Cref{mainthm:packingsubgraphs} when $F = K_{t,t}$, we start by partitioning the vertex set of our dense regular graph $G$ into a small number of expanders using a structural decomposition result from the work of Gruslys and Letzter \cite{gruslys2021cycle}, which goes back to the work of K\"uhn, Lo, Osthus and Staden~\cite{kuhn2015robust, kuhn2016solution} on Hamilton cycles in dense regular graphs. These expanders are, in turn, closely related to a powerful notion called \emph{robust expanders} that was introduced by Kühn, Osthus and Treglown \cite{kuhn2010hamiltonian} and has since been used to prove several longstanding conjectures (see, e.g., \cite{kuhn2011proof, kuhn2011approximate, kuhn2013hamilton}). 

		One of the key contributions of our paper is a novel method for `balancing' these expanders. By this, we mean transforming expanders that are bipartite (or close to being bipartite) with imbalanced part sizes into \emph{balanced} bipartite expanders. This method thus enables us to obtain expanders that are either balanced bipartite or far from bipartite by removing a carefully constructed, small $K_{t,t}$-packing. 
		Our main idea is to track a certain subgraph, which initially consists of the edges between expanders. We then sequentially move vertices between expanders and update the subgraph accordingly, ensuring that by the end, it has bounded maximum degree while still containing enough edges to admit a $K_{t,t}$-packing that balances the expanders (up to an additive constant, which is sufficient for our purposes).
		Note that while balancing expanders also plays a key role in~\cite{kuhn2015robust, kuhn2016solution}—in resolving a problem of Bollobás and H\"aggkvist on Hamilton cycles in regular graphs—and in~\cite{gruslys2021cycle}, which proves a conjecture of Magnant and Martin~\cite{magnant2009note} for dense graphs, our approach differs significantly from the methods used in these works, so it is of independent interest. See Section~\ref{subsec:balancingexpanders} for further details on our balancing procedure.  

		After balancing the expanders, the original problem reduces to finding an almost perfect $K_{t,t}$-packing either in a balanced bipartite expander or in an expander that is far from bipartite. To address the latter case, our first goal is to apply the regularity lemma to obtain vertex-disjoint super-regular pairs covering all but a small set of exceptional vertices. For this, rather than directly seeking a perfect matching in the reduced graph, we exploit the Hamiltonicity of expanders to find a perfect \emph{fractional} matching (inspired by \cite{kelly2008dirac, keevash2009exact, korandi2021minimum}). This fractional matching is used to construct a collection of edges and odd cycles covering all vertices of the reduced graph. We then refine the clusters by splitting them each into two equal parts and then remove a small number of vertices from the new clusters to obtain vertex-disjoint super-regular pairs covering all but a small exceptional set of vertices, as required.

		Second, we cover the exceptional vertices with a $K_{t,t}$-packing, while also ensuring that none of the super-regular pairs are overused. Third, we construct another small $K_{t,t}$-packing, whose purpose is to ensure that the number of remaining vertices in each part of every super-regular pair is divisible by $t$. Ideally, at this point, one could attempt to complete the proof by applying the blow-up lemma to find a perfect $K_{t,t}$-packing covering the uncovered portion of each super-regular pair. However, this is not immediately feasible because the number of remaining vertices in the two parts of a super-regular pair may differ.
		To address this, we introduce another novel balancing strategy: we construct a suitable matching within a small subgraph induced by the uncovered vertices, once again exploiting the Hamiltonicity of expanders. This matching serves as a ‘template’ for constructing a carefully designed balancing $K_{t,t}$-packing, whose removal allows us to find a perfect $K_{t,t}$-packing covering all remaining vertices using the blow-up lemma, as intended. We give a detailed sketch of this balancing strategy in Section~\ref{subsec:proofsketchmatchingtemplate}, and a more detailed outline of our methods in Section~\ref{sec:proosketchmain1}.
        
	\subsection{Packing subdivisions in dense regular graphs}

		Given a graph $F$, a \emph{subdivision} of $F$, denoted by $\TF$, is a graph obtained by replacing the edges of $F$ with pairwise internally vertex-disjoint paths between the corresponding ends, whose interiors avoid the vertices of $F$. In this case, we refer to the vertices of $F$ as the \emph{branch vertices} of $\TF$. For instance, a subdivision $\TK_t$ of the complete graph of order $t$ consists of $t$ (branch) vertices $\{v_1, \ldots, v_t\}$ and $\binom{t}{2}$ pairwise internally vertex-disjoint paths $P_{i,j}$, $1 \le i < j \le t$, such that $P_{i,j}$ joins $v_i$ and $v_j$ and avoids all other vertices in $\{v_1, \ldots, v_t\}$.

		The notion of subdivisions has played an important role in topological graph theory since the seminal result of Kuratowski~\cite{kuratowski1930probleme} from 1930 showing that a graph is planar if and only if it does not contain a subdivision of $K_5$ or $K_{3,3}$. One of the most classical results in this area is due to Mader~\cite{MR0220616}, who showed that
		there is some $d = d(t)$ such that every graph with average degree at least $d$ contains a
		subdivision of the complete graph $K_t$. Mader~\cite{MR0220616}, and independently Erd\H{o}s and Hajnal~\cite{MR0173247} conjectured that $d(t) = O(t^2)$. In the 1990s, Koml\'os and Szemer\'edi~\cite{komlos1994topological, komlos1996topological}, and independently, Bollob\'as and Thomason~\cite{bollobas1998proof} confirmed this conjecture. Since then, various extensions and strengthenings of this result have been studied, see, e.g., \cite{fernandez2023disjoint, liu2017proof, montgomery2015logarithmically}.

		Given graphs $F$ and $G$, a \emph{$\TF$-packing} in $G$ is a collection of pairwise vertex-disjoint copies of subdivisions of $F$ in $G$ (which are not required to be isomorphic). In 2002, Verstra\"ete~\cite{verstraete2002note} made the following conjecture.

		\begin{conj}[Verstra\"ete~\cite{verstraete2002note}]
			\label{conj:packingsubdivisons}
			For every graph $F$ and every $\eta > 0$, there exists $d_0 = d_0(F, \eta)$ such that, for all integers $d \ge d_0$, every $d$-regular graph $G$ of order $n$ contains a $\TF$-packing which covers all but at most $\eta n$ vertices of $G$.
		\end{conj}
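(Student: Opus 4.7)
The plan is to first reduce the statement to the case $F = K_t$ where $t = |V(F)|$. Any subdivision $\TK_t$ contains a $\TF$ on the same branch vertices, obtained by retaining only the paths corresponding to edges of $F$; the interior vertices of the discarded paths then become uncovered. So provided one can find a $\TK_t$-packing in which each copy has relatively short subdivision paths (so branch vertices dominate its vertex set), the conversion loses a sublinear number of vertices and it suffices to prove the conjecture for $F = K_t$.

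Focusing on $F = K_t$, the approach splits by regime. In the dense regime $d \geq c n$, which is the range the methods of this paper are built for, I would first apply the Gruslys--Letzter structural decomposition to partition $V(G)$ into a bounded number of robust expanders in the sense of K\"uhn--Osthus--Treglown. Inside each robust expander $G'$, I would apply the regularity lemma and exploit Hamiltonicity of the reduced graph to produce a perfect fractional matching in it, much as in the proof sketch of \Cref{mainthm:packingsubgraphs}. This yields a decomposition of most of $V(G')$ into super-regular pairs, with a small exceptional set outside. Within each super-regular pair $(A,B)$ I would embed $\TK_t$ subdivisions whose branch vertices lie (say) in $A$ and whose internally disjoint paths zig-zag between $A$ and $B$; the \emph{continuous} flexibility in path lengths is a major advantage over the rigid $K_{t,t}$-packing setting, since the imbalance $||A|-|B||$ and any divisibility constraints can be absorbed by adjusting a few path lengths by one or two, avoiding the delicate matching-template balancing step required in the proof of \Cref{mainthm:packingsubgraphs}. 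The exceptional set is cleaned up by a short auxiliary $\TK_t$-packing that exploits common neighborhoods in the super-regular pairs, and the remainder of each pair is completed by the blow-up lemma.

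The main obstacle is the sparse regime, where $d$ is permitted to be merely a large constant (or slowly growing in $n$). Here the regularity and blow-up lemmas are not available, and one must instead rely on sublinear expansion in the sense of Koml\'os--Szemer\'edi. My plan would be to iteratively locate large subgraphs of the current graph with strong expansion properties, use the Koml\'os--Szemer\'edi machinery to extract a single large $\TK_t$ subdivision that consumes nearly all of each such subgraph, remove it, and recurse. The principal difficulty is ensuring that the residual graph retains enough regularity and expansion to continue throughout the iteration, and that the total leftover can be held below $\eta n$; addressing this would most likely require setting aside an absorbing template at the outset, dedicated to mopping up the final uncovered vertices. Making this absorption work uniformly over all $d$-regular host graphs is the genuinely delicate point and is, in my view, the main obstacle to establishing the conjecture in its full generality.
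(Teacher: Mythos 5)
This statement is a \emph{conjecture} that the paper does not prove: it is attributed to Verstra\"ete, and the paper explicitly credits its full resolution to the recent work of Montgomery, Petrova, Ranganathan and Tan~\cite{montgomery2025packing} (with earlier special cases by Kelmans--Mubayi--Sudakov, Alon, K\"uhn--Osthus, and Letzter--Methuku--Sudakov). The paper's own contribution in this direction is \Cref{thm:Kt-subdivision-dense}, a \emph{perfect} $\TF$-packing in the dense regime $d \ge cn$ only. So there is no in-paper proof to compare against, and your proposal must stand on its own. It does not: by your own admission, the sparse regime --- where $d$ is merely a large constant, which is the entire content of the conjecture beyond what was already known --- is left as an ``obstacle'' with only a vague plan (sublinear expansion plus an unspecified absorbing template). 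That is the whole problem, not a detail to be filled in later.

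There is also a concrete gap in your opening reduction to $F = K_t$. Deleting the paths of a $\TK_t$ that correspond to non-edges of $F$ discards their interior vertices, and these constitute a \emph{constant fraction} of each copy: if the subdivision paths have typical length $\ell$, the discarded interiors make up roughly $\bigl(\tbinom{t}{2}-e(F)\bigr)\ell \big/ \bigl(t+\tbinom{t}{2}\ell\bigr)$ of the copy, which is bounded away from $0$ unless $\ell$ is essentially $1$. You cannot force $\ell = 1$ (or force branch vertices to dominate) when $d$ is a constant independent of $n$: for instance, in a bipartite $d$-regular graph every path of a $\TK_t$ joining two branch vertices on the same side must have length at least $2$, so internal vertices necessarily outnumber branch vertices for large $t$. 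Hence your reduction loses a fixed positive fraction of the covered vertices, not a sublinear amount, and cannot yield the ``all but $\eta n$'' conclusion for arbitrary $\eta$. A correct reduction would need to control which paths are long (those retained for $F$) and which are short (those discarded), or avoid the detour through $K_t$ altogether, as the actual resolutions of the conjecture do.
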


		Quite a lot of research has established Conjecture~\ref{conj:packingsubdivisons} in several natural special cases. 
		The conjecture follows from a result of Kelmans, Mubayi and Sudakov~\cite{kelmans2001asymptotically} when $F$ is a tree, and in 2003, Alon~\cite{alon2003problems} proved it when $F$ is a cycle. 
		In 2005, K\"uhn and Osthus~\cite{kuhn2005packings} observed that when $G$ is dense (i.e., $d = \Omega(n)$) the conjecture follows from their results on packings in dense regular graphs. 
		Making significant progress towards the general case, Letzter, Methuku and Sudakov~\cite{LetzterMetukuSudakov} applied their work on nearly-Hamilton cycles in sublinear expanders to show that the conjecture holds for all graphs $F$ when $G$ has degree at least polylogarithmic in $n$. 
		Finally, very recently, Montgomery, Petrova, Ranganathan and Tan~\cite{montgomery2025packing} resolved the conjecture in full.

		For subdivisions of complete graphs $K_t$, K\"uhn and Osthus~\cite{kuhn2005packings} went even further and showed that one can actually guarantee perfect packings in dense regular graphs for $t = 4$ and $t = 5$. More precisely, they showed that for every $0 < c \le 1$, there is a positive number $n_0 = n_0(c)$ such that every $d$-regular graph $G$ of order $n$, with $d \ge cn$ and $n \ge n_0$, has a perfect $\TK_t$-packing for $t = 4$ and $t = 5$. 
		By a result of Gruslys and Letzter \cite{gruslys2021cycle}, it follows that this also holds for $t = 2$ and $t = 3$. 

		In 2005, K\"uhn and Osthus asked whether the same holds for $t \ge 6$.

		\begin{prob}[K\"uhn and Osthus~\cite{kuhn2005packings}]
			\label{prob:TKt}
			Given $t \ge 6$ and $0 < c \le 1$, does every $d$-regular graph $G$ of order $n$, where $d \ge cn$ and $n$ is sufficiently large, have a perfect $\TK_t$-packing?
		\end{prob}

		K\"uhn and Osthus's result \cite{kuhn2005packings} actually guarantees a perfect $\TK_t$-packing, with $t \in \{4,5\}$, even when $G$ is only assumed to be almost regular. In contrast, for $t = 3$ or $t \ge 6$, the restriction to regular graphs $G$ in \Cref{prob:TKt} is necessary. Indeed, it is easy to see that the complete bipartite graph $K_{m, m+1}$ does not have a perfect $\TK_t$-packing if $t = 3$ (note that in this case a $\TK_3$-packing corresponds to a cycle partition, i.e., a collection of pairwise vertex-disjoint cycles covering all vertices). Interestingly, K\"uhn and Osthus showed that $K_{m, m+1}$ also does not have a perfect $\TK_t$-packing if $t \ge 6$ (see Proposition~5.1 in \cite{kuhn2005packings}). 

		Our second result is the complete resolution of Problem~\ref{prob:TKt}. In fact, we prove a stronger result, showing that there is a perfect $\TF$-packing, for every graph $F$. 

		\begin{theorem} \label{thm:Kt-subdivision-dense}
			For every graph $F$ and $0 < c \le 1$, there is a positive number $n_0 = n_0(c)$ such that every $d$-regular graph $G$ of order $n$, where $d \ge cn$ and $n \ge n_0$, has a perfect $\TF$-packing.
		\end{theorem}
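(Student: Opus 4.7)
The plan is to derive \Cref{thm:Kt-subdivision-dense} from the main theorem \Cref{mainthm:packingsubgraphs}. The idea is that once we have a near-perfect packing by a complete bipartite graph $K_{s,s}$ for sufficiently large $s$, each $K_{s,s}$-copy is so dense that it contains a spanning subdivision of $F$, and moreover can absorb a constant number of attached extra vertices into such a spanning $\TF$.

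The central ingredient is a \emph{structural claim}: for every graph $F$ and every integer $k \ge 0$ there exists $s_0 = s_0(F, k)$ such that for all $s \ge s_0$ and every set $W$ of at most $k$ vertices outside $V(K_{s,s})$ each having at least $s_0$ neighbours in each part of $K_{s,s}$, the graph $K_{s,s}$ augmented by the edges from $W$ contains a subdivision of $F$ spanning $V(K_{s,s}) \cup W$. To prove this, one places the $t = |V(F)|$ branch vertices of $F$ into the two parts of $K_{s,s}$ in a way that satisfies the bipartite parity condition $m_{AA} - m_{BB} = b - a$, where $(a,b)$ is the chosen split and $m_{XY}$ counts $F$-edges between branch vertices placed into parts $X$ and $Y$. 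One then constructs the $|E(F)|$ internally disjoint paths corresponding to $E(F)$ by distributing the non-branch vertices of $K_{s,s}$ and $W$ among them, greedily building each path in the dense bipartite host, and splicing each $u \in W$ into some path by detouring through $u$ between two of its neighbours that are consecutive on the path.

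Granted the claim, the reduction is routine. Fix $c > 0$ and $F$, and choose $s = s(F, c)$ sufficiently large in terms of $F$ and $C = C(K_{s,s}, c)$. Apply \Cref{mainthm:packingsubgraphs} with $H = K_{s,s}$ to obtain a $K_{s,s}$-packing $\cP$ of $G$ covering all but a set $U$ of at most $C$ vertices. Each $u \in U$ has at least $cn$ neighbours in $G$ and $|\cP| \le n/(2s)$, so by averaging $u$ has $\Omega(s)$ neighbours in each part of a positive fraction of copies in $\cP$. Since $|U| \le C$ is constant, a greedy assignment places each $u$ into a copy $K_u$ with this property, with each copy receiving at most $C$ extras. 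Applying the structural claim with $k = C$ to each copy (with its attached extras) produces a spanning $\TF$, and the disjoint union over all copies is the required perfect $\TF$-packing of $G$. A minor parity issue --- that $2s + |W_K|$ may have the wrong parity for a spanning $\TF$ to exist on $V(K) \cup W_K$ --- is handled by allowing a small mix of $K_{s, s+1}$-copies in the initial packing, which \Cref{mainthm:packingsubgraphs} permits since it applies to any bipartite graph~$H$.

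The main obstacle is the structural claim. The essential content is that complete bipartite graphs admit spanning subdivisions of any fixed $F$ with enough flexibility to absorb a bounded number of attached extras. The parity constraints in the bipartite host (even/odd path lengths for the three edge types $AA$, $AB$, $BB$) make the explicit construction finicky, as the choice of $(a,b)$ forced by $m_{AA} - m_{BB} = b - a$ may not be realisable for every graph $F$ on a fixed number of vertices; however, the density of $K_{s,s}$ for large $s$ and the freedom to rearrange path lengths make this a straightforward, if technical, combinatorial exercise.
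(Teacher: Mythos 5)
Your reduction (apply \Cref{mainthm:packingsubgraphs} with $H=K_{s,s}$, then span each copy, together with a bounded number of attached leftover vertices, by a single subdivision of $F$) is a genuinely different route from the paper, which never invokes \Cref{mainthm:packingsubgraphs} and instead works directly with the expander decomposition, a balancing linear forest, and Hamilton-path absorption. Unfortunately the central structural claim on which your reduction rests is false, and the issue you flag as ``finicky'' is in fact a hard obstruction, not a technicality.

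Concretely, take $F=K_5$ and $W=\emptyset$: then $K_{s,s}$ contains \emph{no} spanning subdivision of $F$. If the branch vertices are placed with $a$ in one part $A$ and $b=5-a$ in the other part $B$, then every path of the subdivision alternates between $A$ and $B$, so a path joining two branch vertices in $A$ covers exactly one more vertex of $B$ than of $A$, a path joining two branch vertices in $B$ covers one more vertex of $A$, and an $A$--$B$ path covers equally many of each, \emph{regardless of the path lengths}. Summing over all $\binom{5}{2}$ paths, the subdivision covers exactly $(b-a)+\binom{a}{2}-\binom{b}{2}=a-b$ more vertices of $B$ than of $A$, and $a-b\neq 0$ since $a+b=5$ is odd. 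Hence no choice of $(a,b)$ yields a spanning subdivision; ``the freedom to rearrange path lengths'' cannot repair this because the $A$/$B$ imbalance is determined by the branch-vertex placement alone. (This is the same arithmetic behind the Kühn--Osthus example showing $K_{m,m+1}$ has no perfect $\TK_t$-packing for $t\ge 6$.) The known fix is to place \emph{two} subdivisions of $F$ in each copy, with mirrored branch-vertex placements so that their imbalances cancel --- this is exactly what the paper does inside each expander with the pair $F_i,F_i'$ --- but that idea is absent from your proposal. A second, related gap is the absorption of the leftover vertices: your averaging argument only shows a leftover vertex $u$ has many neighbours in \emph{some} copies, not in \emph{both parts} of a copy (if $G$ is bipartite, $u$ has neighbours in only one part of every copy), so $u$ cannot in general be spliced between two consecutive path vertices; instead it effectively joins one side of the copy and shifts the imbalance further, which the $K_{s,s+1}$-mixing trick does not control. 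As it stands, the argument therefore does not go through.
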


		It is unclear whether the requirement that the vertex degrees in $G$ are linear in $n$ in Theorem~\ref{thm:Kt-subdivision-dense} is necessary. In this direction, K\"uhn and Osthus~\cite{kuhn2005packings} gave an example showing that for all $t \ge 3$ the vertex degrees in $G$ must be at least $\sqrt{n/2}$ to guarantee a perfect $\TK_t$-packing in $G$, even if $G$ is regular.
		Interestingly, the case $t=2$ is different. It is a simple exercise to show that every $d$-regular graph admits a perfect $\TK_2$-packing, i.e., its vertex set can be partitioned into vertex-disjoint paths of positive length. (In fact, if $d$ is even, then by Petersen's 2-factor theorem~\cite{Petersen1891} it also admits a perfect $\TK_3$-packing.) It is therefore natural to ask how few paths are required to achieve such a partition.
		In this direction, Magnant and Martin \cite{magnant2009note} conjectured that the vertices of every $d$-regular graph on $n$ vertices can be covered by at most $n / (d+1)$ pairwise vertex-disjoint paths. This conjecture of Magnant and Martin was confirmed for $d \le 5$ by Magnant and Martin themselves and for dense graphs $G$ by Gruslys and Letzter \cite{gruslys2021cycle}.
		Montgomery, M\"uyesser, Pokrovskiy and Sudakov \cite{montgomery2024approximate} showed that almost all vertices of every $n$-vertex $d$-regular graph can be covered by at most $n/(d+1)$ vertex-disjoint paths. Very recently, Christoph, Dragani\'c, Gir\~ao, Hurley, Michel and M\"uyesser~\cite{christoph2025new} proved that every $d$-regular graph can be covered by at most $2n/(d+1)$ vertex-disjoint paths, thereby proving the Magnant--Martin conjecture up to a constant factor and confirming a conjecture of Feige and Fuchs (see also~\cite{christoph2025cycle}).

		To prove \Cref{thm:Kt-subdivision-dense}, we begin by partitioning the vertex set of the dense regular graph $G$ into a small number of expanders, following the approach used in the proof of \Cref{mainthm:packingsubgraphs}. In this setting, however, we employ a lemma from~\cite{gruslys2021cycle} that provides a small linear forest $H$ that balances the expanders, ensuring that each expander contains either zero or two leaves of $H$.
		We then modify this linear forest so that at most one component of the new linear forest is associated with each expander, with both leaves of the component contained within that expander. Next, we construct a pair of small, vertex-disjoint $F$-subdivisions in each expander, while also ensuring that their union remains balanced. Finally, we \emph{absorb} the components of $H$, along with any remaining uncovered vertices in each expander, into these $F$-subdivisions. This absorption step again relies on the Hamiltonicity of expanders and a robust connectivity property, which guarantees that any two vertices can be joined by a short path that avoids a small set of forbidden vertices. See Section~\ref{sec:Kt-subdivisions-dense} for a more detailed sketch of these ideas.

		\subsection{Organisation of the paper} 
			In the next subsection, we present the notation used throughout the paper.
			In Section~\ref{sec:proosketchmain1}, we outline a detailed proof sketch of our main result, \Cref{mainthm:packingsubgraphs}, on almost perfect packings in dense regular graphs. In Section~\ref{sec:tools} we collect some tools and lemmas that are used throughout the paper. Key to our proof of \Cref{mainthm:packingsubgraphs} are two lemmas -- \Cref{lem:balancing} (for balancing expanders) and \Cref{lem:Ktt-packing-expander} (for packing $K_{t,t}$'s in expanders). We prove \Cref{lem:balancing} in Section~\ref{sec:balancing} and \Cref{lem:Ktt-packing-expander} in Section~\ref{sec:packings}, and complete the proof of \Cref{mainthm:packingsubgraphs} in \Cref{sec:proof}. In Section~\ref{sec:Kt-subdivisions-dense}, we prove our second result, \Cref{thm:Kt-subdivision-dense}, which addresses perfect packings of subdivisions in dense regular graphs. A detailed sketch of its proof is given at the beginning of Section~\ref{sec:Kt-subdivisions-dense}.

	\subsection{Notation}
		Let $[m]$ denote the set $\{1,\ldots,m\}$. We write $c = a \pm b$ if $a-b \le c \le a+b$. We use the `$\ll$' notation to state many of our results. We write $a \ll b$ to mean that there exists a non-decreasing function $f : (0, 1] \mapsto (0, 1]$ such that the subsequent result holds for all $a,b \in (0,1]$ with $a \le f(b)$. We will not calculate these functions explicitly. Hierarchies with more constants are defined in a similar way and are to be read from right to left.

		For a graph $G$, let $e(G)$ denote the number of edges in $G$ and, for two sets $X, Y \subseteq V(G)$, we let $G[X, Y]$ denote the subgraph of $G$ with the vertex set $X \cup Y$, whose edges are the edges of $G$ with one end in $X$ and the other end in $Y$, and we let $e_G(X,Y)$ denote the number of edges in $G[X, Y]$. If $X = Y$, for convenience, we write $G[X]$ instead of $G[X,X]$, and $e_G(X)$ instead of $e_G(X,X)$. 
		The maximum degree of a graph $G$ is denoted by $\Delta(G)$. For a vertex $v \in V(G)$, let $N_G(v)$ denote the set of neighbours of $v$ in $G$. For a set $S \subseteq V(G)$ and $v \in V(G)$, let $d_G(v, S)$ denote the number of edges in $G$ between $v$ and $S$. If the host graph is clear from the context, sometimes we drop the subscript and write $e(X,Y), e(X), d(v, S), N(v)$ instead of $e_G(X,Y), e_G(X), d_G(v, S), N_G(v)$ respectively. For a set $U \subseteq V(G)$, let $\comp{U}$ denote the set $V(G) \setminus U$, and let $G \setminus U$ denote the subgraph of $G$ induced by $V(G) \setminus U$. 

		For a graph $G$, a \textit{cut} of $G$ is a partition $\{X,Y\}$ of $V(G)$, where $X$ and $Y$ are both non-empty. We say that a cut $\{X, Y\}$ is \textit{$\alpha$-sparse} if $e_G(X, Y) \le \alpha|X||Y|$. If $G$ is an $n$-vertex graph, we say that $G$ is \textit{$\alpha$-almost-bipartite} if there exists a partition $\{X, Y\}$ of $V(G)$ such that $G$ has
		at most $\alpha n^2$ edges that are not in $G[X, Y]$. Otherwise we say that $G$ is \textit{$\alpha$-far-from-bipartite}.

		For a graph $G$ and vertices $x,y$ in $G$, an \emph{$(x,y)$-path} in $G$ is a path with ends $x$ and $y$. If $P$ is an $(x,y)$-path and $Q$ is a $(y,z)$-path, we denote by $xPyQz$ the concatenation of the paths $P$ and $Q$.

		Using a slight abuse of notation, for a collection $\cK$ of subgraphs in a graph $G$, we denote the set of vertices covered by these subgraphs as $V(\cK)$. Throughout this paper, we often omit floor and ceiling signs when dealing with large numbers whenever they are not crucial.

\section{Sketch of the proof of \Cref{mainthm:packingsubgraphs}}
	\label{sec:proosketchmain1}

	In this section, we present a sketch of the proof of our main result, \Cref{mainthm:packingsubgraphs}. At a high level, our approach begins by partitioning the vertex set of the regular graph $G$ into subsets with certain expansion properties, which we refer to as expanders. We then move some vertices between these expanders and remove a carefully constructed collection of $K_{t,t}$'s to `balance' these expanders, using our first key lemma (\Cref{lem:balancing}). More precisely, \Cref{lem:balancing} allows us to transform expanders that are bipartite (or close to bipartite) with imbalanced part sizes into balanced bipartite expanders. Next, we find almost perfect $K_{t,t}$-packings in each of the resulting expanders with the help of our second key lemma (\Cref{lem:Ktt-packing-expander}). Several significant challenges arise in implementing this approach, which we now address in detail. 

	Let $F$ be a bipartite graph, and let $0 < c \le 1$. Let $G$ be a $d$-regular graph of order $n$, where $d \ge cn$.
	Let $t = |V(F)|$.
	Recall that \Cref{mainthm:packingsubgraphs} aims to find an $F$-packing that covers all but at most a constant number of vertices of $G$.
	It is easy to see that in order to find such a packing, it suffices to find a $K_{t,t}$-packing in $G$ covering all but at most $C$ vertices of $G$ for some constant $C = C(t,c)$, because $K_{t,t}$ has a perfect $F$-packing.

	Our proof begins by partitioning the vertex set of the regular graph $G$ into a small number of subsets $Z_1, \ldots, Z_r$ (for some $r \le \lceil 1/c \rceil$) such that, for each $i \in [r]$, the induced subgraph $G[Z_i]$ has strong expansion properties, is either almost bipartite or far from bipartite, and the number of edges of $G$ between different sets $Z_i$ is small. (See Lemma~\ref{lem:expander-decomposition} for the precise statement.) 

	\subsection{Balancing the expanders}  \label{subsec:balancingexpanders}
    
		Given the partition $\{Z_1, \ldots, Z_r\}$ of the vertices of $G$, a natural strategy for obtaining an almost perfect $K_{t, t}$-packing in $G$ is to find an almost perfect $K_{t, t}$-packing in each of the expanders $G[Z_i]$.
		However, it is possible that some of these expanders are rather imbalanced bipartite graphs, rendering this plan impossible. 
		To overcome this obstacle, in our first key lemma (\Cref{lem:balancing}), we modify the sets $Z_i$ and remove a small collection of vertex-disjoint $K_{t,t}$-copies and a very small number of additional vertices so that the remainder of each modified set $Z_i$ spans an expander that is either bipartite and balanced or is far from bipartite. (In other words, this `balances' the expanders $G[Z_i]$ that are almost bipartite.)

		The first idea towards balancing the almost bipartite expanders is to use the regularity of $G$. 
		For the rest of this section, let us assume for simplicity that $G$ itself is bipartite, and denote the bipartition of $G$ by $\{X, Y\}$ and the corresponding bipartition of $Z_i$ by $\{X_i, Y_i\}$.
		Write $V := V(G)$, and let $H$ denote the subgraph of $G$ on vertex set $V$ consisting of all edges whose endpoints lie in distinct sets $Z_i$.

		A simple counting argument, using that $G$ is $d$-regular, shows that the following holds for every $i \in [r]$. 
		\begin{equation} \label{eqn:invariant}
        e_H(X_i, V \setminus Z_i) - e_H(Y_i, V \setminus Z_i)
			 = e_G(X_i, V \setminus Z_i) - e_G(Y_i, V \setminus Z_i)= (|X_i| - |Y_i|) \cdot d.
		\end{equation}
		Pretending for a moment that $t = 1$, so that a $K_{t,t}$-packing is just a matching, a natural aim is thus to find a matching consisting of $\frac{1}{d} e_H(X_i, V \setminus Z_i)$ edges leaving $Z_i$ and incident to $X_i$, and $\frac{1}{d} e_H(Y_i, V \setminus Z_i)$ edges leaving $Z_i$ and incident to $Y_i$. By~\eqref{eqn:invariant}, this guarantees that the matching leaves the same number of vertices uncovered in $X_i$ and $Y_i$.\footnote{We ignore rounding errors due to taking ceilings or floors; this is justified as we are allowed $O(1)$ uncovered vertices.} To achieve this simultaneously for all $i \in [r]$, it suffices to find a matching consisting of $\frac{1}{d} e_H(X_i, Y_j)$ edges between $X_i$ and $Y_j$, for every distinct $i,j \in [r]$. 

		It turns out that it is quite easy to construct such a matching if all bipartite subgraphs $H[X_i, Y_j]$ for $i \neq j$ have maximum degree at most $\rho n$, for some small constant $0 < \rho \ll c$.
Indeed, we consider the pairs $(X_i, Y_j)$ with $i \neq j$ one by one, in increasing order of $e_H(X_i, Y_j)$, and at each step we construct a matching of size $\frac{1}{d} e_H(X_i, Y_j)$ in $H[X_i, Y_j]$ that is vertex-disjoint from all previously defined matchings. We claim that this is always possible. Indeed, consider a pair $(X_i, Y_j)$. Denote by $e$ the number of edges between $X_i$ and $Y_j$ and by $X_i' \subseteq X_i$ and $Y_j' \subseteq Y_j$ the sets of vertices that were used in previously defined matchings. Then $|X_i'|, |Y_j'| \le r^2 \cdot e/d$, by the order in which we process the pairs. By the maximum degree assumption and since $d\geq cn$, we have
		\begin{equation*}
			e_H(X_i \setminus X_i', Y_j \setminus Y_j') 
			\ge e_H(X_i, Y_j) - \rho n \cdot (|X_i'| + |Y_j'|)
			\ge e - e/2 = e/2.
		\end{equation*}
		Since every graph $H'$ has a matching of size at least $e(H') / 2\Delta(H')$, it follows that there is a matching of size at least $e / (4\rho n) \ge e/d$ in $H[X_i \setminus X_i',, Y_j \setminus Y_j']$, as required.

		For general $t \ge 1$, our aim is the same, except that we replace matching edges between $X_i$ and $Y_j$ with copies of $K_{t,t}$ in $G$, each consisting of one vertex in $X_i$, $t$ vertices in $Y_j$, and $t-1$ vertices in $X_j$, so as to achieve the same balancing effect as moving one vertex from $X_i$ to $X_j$. The above greedy algorithm still applies if we use an asymmetric version of the Kővári--Sós--Turán theorem (\Cref{lem:unbalanced-kst}) for all pairs $(X_i, Y_j)$ such that $H[X_i, Y_j]$ contains at least $Cn$ edges, where $C$ is a large constant. Since we allow $O(1)$ uncovered vertices, we may ignore pairs $(X_i, Y_j)$ with fewer edges. Indeed, this yields a `balancing' $K_{t,t}$-packing $\cK$, which satisfies $|X_i \setminus V(\cK)| = |Y_i \setminus V(\cK)| + O(1)$ for all $i$.

		The main challenge in proving \Cref{lem:balancing} is therefore to emulate the situation in which the maximum degree (between the sets $Z_i$) is bounded by $\rho n$, which we achieve as follows. 
		We perform the procedure below, sequentially modifying the graph $H$ and the sets $Z_1, \ldots, Z_r$. If there is a pair $(X_i, Y_j)$ with $e_H(X_i, Y_j) \ge d$ and a vertex $v \in X_i$ that has at least $\rho n$ neighbours in $H$ lying in $Y_j$, then we perform the following three operations: we move $v$ from $X_i$ to $X_j$; we remove all edges between $v$ and $Y_j$ from $H$; and we remove $d - d_H(v)$ edges between $X_i \setminus {v}$ and $Y_j$ from $H$, chosen arbitrarily. (The fact that the third operation is possible follows from the assumption that $e_H(X_i, Y_j) \ge d$.) Note that this procedure only removes edges from $H$ in each step and never adds any. In particular, although $v$ is now moved to $X_j$ we do not add to $H$ the edges from $v$ to $Y_i$. This shows that the number of edges in $H$ decreases by at least $\rho n$ in each step of the procedure.
		The first two operations are quite natural: since $v$ has large degree into $Y_j$, adding it to $X_j$ would preserve the expansion properties of $G[X_j, Y_j]$. Since $H$ initially contained no edges between $X_\ell$ and $Y_\ell$ for any $\ell$, the second operation preserves this condition after each step of the procedure.
		The motivation for the third operation is the observation that if $H, X_1, \ldots, X_r, Y_1, \ldots, Y_r$ satisfy \eqref{eqn:invariant} and we modify them by performing the three operations above, then the resulting $H, X_1, \ldots, X_r, Y_1, \ldots, Y_r$ still satisfy \eqref{eqn:invariant}. Indeed, for example, to see that \eqref{eqn:invariant} continues to hold for $X_i, Y_i$, note that after performing the three operations above, both the left-hand and right-hand sides of \eqref{eqn:invariant} decrease by exactly $d$.

		Continuing this procedure for as long as possible (also with the roles of $X_i$ and $Y_i$ reversed), the resulting graph $H$ and sets $X_1, \ldots, X_r, Y_1, \ldots, Y_r$ still satisfy \eqref{eqn:invariant}, and for every $i \neq j$, the bipartite subgraph $H[X_i, Y_j]$ either has maximum degree at most $\rho n$ or contains fewer than $d$ edges. Therefore, this allows us to run the above procedure on $H$ to find a $K_{t,t}$-packing $\cK$ that balances the modified sets $Z_i$ so that $|X_i \setminus V(\cK)| = |Y_i \setminus V(\cK)| + O(1)$ for every $i$. Moreover, since $H$ initially has few edges (as there are few edges between the sets $Z_i$) and each step removes at least $\rho n$ edges, we are guaranteed that only few vertices are moved throughout the procedure; this is essential for showing that the modified sets $Z_i$ are still expanders. 

		Balancing expanders (in different contexts) was also a key step in the proofs of K\"uhn, Lo, Osthus and Staden in~\cite{kuhn2015robust, kuhn2016solution} and of Gruslys and Letzter in~\cite{gruslys2021cycle}. The method we develop here to construct a balanced $K_{t,t}$-packing differs significantly from these previous approaches. 

	\subsection{Packing $K_{t,t}$'s in expanders} \label{subsec:proofsketchmatchingtemplate}
		
		Having balanced the expanders (using \Cref{lem:balancing}), the original problem is reduced to the problem of finding an almost perfect $K_{t,t}$-packing (covering all but a constant number of vertices) in an expander $G$ that is dense, almost regular, and either far from bipartite or balanced and bipartite. We address this reduced problem in our second key lemma, \Cref{lem:Ktt-packing-expander}, where we construct a \emph{perfect} $K_{t,t}$-packing in such an expander under the assumption that the number of vertices is divisible by $2t$. 

		The proof of \Cref{lem:Ktt-packing-expander} uses the regularity lemma and combines two main themes. First, we exploit the fact that dense, almost regular expanders that are either far from bipartite or bipartite and balanced contain Hamilton cycles, following the work of Kühn, Osthus, and Treglown~\cite{kuhn2010hamiltonian} on robust expanders. Second, as in the previous section, we again use $K_{t,t}$-packings for balancing purposes, but now in two distinct ways that differ from the approach used earlier.

		To begin, we aim to find disjoint sets $U_1, \ldots, U_{2m}$ of equal size that cover almost all vertices of $G$, such that almost every pair $(U_i, U_j)$ is $\eps$-regular in $G$, and each pair $G[U_{2i-1}, U_{2i}]$ is somewhat dense and super-regular (that is, almost every pair $(U_i, U_j)$ is $\eps$-regular in $G$, and moreover $G[U_{2i-1}, U_{2i}]$ has reasonably large minimum degree).
To achieve this, we apply the regularity lemma (\Cref{lem:regularity}) to $G$ to obtain vertex-disjoint clusters $V_1, \ldots, V_m$ of equal size that cover almost all vertices of $G$ and such that almost all pairs are $\eps$-regular. We then use the expansion properties of $G$, together with the aforementioned result on Hamiltonicity of expanders, to deduce that the reduced graph $\Gamma$ (whose vertices are $V_1, \ldots, V_m$ and whose edges correspond to dense $\eps$-regular pairs) admits a perfect \emph{fractional matching}. This, in turn, implies the existence of a perfect $2$-matching in $\Gamma$, that is, a collection of vertex-disjoint edges and cycles covering all vertices.
Consequently, splitting each cluster $V_i$ into two equal parts yields a perfect matching in the corresponding reduced graph. By possibly removing a small proportion of vertices from each of these new clusters, we can ensure that each matched pair is super-regular, as required. The idea of seeking perfect $2$-matchings in reduced graphs has appeared previously, for example in~\cite{kelly2008dirac, keevash2009exact, korandi2021minimum}.

		Having found the sets $U_1, \ldots, U_{2m}$, we can apply the blow-up lemma to each pair $G[U_{2i-1}, U_{2i}]$ to cover all but $2t$ vertices of $U_{2i-1} \cup U_{2i}$ with a $K_{t,t}$-packing. However, this approach may only produce a $K_{t,t}$-packing in $G$ that leaves linearly many vertices uncovered, namely those outside the sets $U_1, \ldots, U_{2m}$. To cover all vertices of $G$ (rather than almost all), we instead find a $K_{t,t}$-packing $\cK$ that covers every vertex outside $U_1 \cup \cdots \cup U_{2m}$ and only very few vertices in each $U_i$, and, crucially, leaves the same number of uncovered vertices in each $U_i$, where this number is divisible by $t$. Once such a $K_{t,t}$-packing $\cK$ is found, we can apply the blow-up lemma to cover the remaining vertices in each pair $G[U_{2i-1}, U_{2i}]$ with a perfect $K_{t,t}$-packing, thereby completing $\cK$ to a perfect $K_{t,t}$-packing in $G$.

		To construct such a $K_{t,t}$-packing $\cK$, we proceed in three steps. First, we construct a packing $\cK_1$ that covers all vertices outside $U_1 \cup \cdots \cup U_{2m}$ while using very few vertices from each set $U_i$. This is straightforward using the Kővári--Sós--Turán theorem together with the properties of $G$. Next, we construct two further $K_{t,t}$-packings, $\cK_2$ and $\cK_3$, each serving a different balancing role. The packing $\cK_2$ is chosen so that, for every $i$, the number of vertices in $U_i$ uncovered by $\cK_1 \cup \cK_2$ is divisible by $t$. We then construct $\cK_3$ so that the number of vertices in each $U_i$ uncovered by $\cK_1 \cup \cK_2 \cup \cK_3$ is the same for all $i$, and still divisible by $t$. Taking $\cK := \cK_1 \cup \cK_2 \cup \cK_3$ allows us to complete the proof using the blow-up lemma, as explained in the previous paragraph.

		For the construction of $\cK_2$, the key idea is that a walk
$U_{i_1}\ldots U_{i_{2s-1}}$ of even length in the reduced graph allows us
to ``transfer'' vertices from $U_{i_1}$ to $U_{i_{2s-1}}$ (modulo $t$).
More precisely, we construct a collection of copies of $K_{t,t}$ between
the clusters corresponding to this walk so as to create a balancing effect
on the clusters at the two ends of the walk when these copies are removed.
By combining several such walks, we can construct a collection $\cK_2$ of
copies of $K_{t,t}$ such that the number of uncovered vertices in each
$U_i$ is divisible by $t$, as desired (see
Section~\ref{subsec:Kttdivisibility} for further details). 

We now describe the construction of $\cK_3$. For each $i\in[2m]$, let
$U_i'$ denote the set of vertices in $U_i$ that remain uncovered. Our first
step is to find a \emph{template} matching $\cM$ in
$G[U_1'\cup\ldots\cup U_{2m}']$ such that
$|U_i'| - t\cdot |V(\cM)\cap U_i'|$ is the same for all $i\in[2m]$. Given
such a matching $\cM$, we apply the blow-up lemma to construct a
$K_{t,t}$-packing $\cK_3$ by replacing each edge of $\cM$ between $U_i'$
and $U_j'$ with a copy of $K_{t,t}$ in $G[U_i',U_j']$. Consequently, the
number of vertices in $U_i'$ that remain uncovered by $\cK_3$ is exactly
$|U_i'| - t\cdot |V(\cM)\cap U_i'|$. By the choice of $\cM$, this quantity
is the same for all $i\in[2m]$; moreover, it is divisible by $t$ by
choice of $\cK_2$, as required. Finally, to construct the template matching $\cM$, we select subsets
$U_i''\subseteq U_i$ of appropriate size and show that the induced graph
$G[U_1''\cup\ldots\cup U_{2m}'']$ inherits the relevant expansion
properties of $G$. By the aforementioned Hamiltonicity of expanders, this graph contains a
perfect matching, which serves as the desired template matching $\cM$
(see Section~\ref{subsec:templatematching} for further details).

\section{Useful lemmas and tools}
\label{sec:tools}

	\subsection{Concentration inequality}\label{sec:concdense}

		We need the following concentration inequality for the hypergeometric distribution. Let $m, n, N \in \mathbb{N}$ such that $\max\{m, n\} < N$. Recall that a random variable $X$ has \emph{hypergeometric distribution} with parameters $N, n, m$ if $X \coloneqq |S \cap [m]|$, where $S$ is a random subset of $[N]$ of size $n$. 

		\begin{lemma}[Theorem 2.10 in \cite{janson2000wiley}]
			\label{hypergeometricconcentration}
			Let $X\sim\operatorname{Hypergeo}(N,m,n)$ be a hypergeometric random variable with mean $\mathbb{E}[X]=nm/N$. Then for all $0 \le \eps  \le 3/2$, we have 
			\begin{equation*}
				\mathbb{P}\big(|X-\mathbb{E}[X]|\ge\varepsilon \mathbb{E}[X] \big) \le 2e^{-\eps^2\mathbb{E}[X]/3}.
			\end{equation*}
		\end{lemma}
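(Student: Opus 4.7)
The plan is to reduce this hypergeometric tail bound to the classical Chernoff bound for the binomial distribution via Hoeffding's convex-function coupling. Specifically, Hoeffding proved that if $X \sim \operatorname{Hypergeo}(N, m, n)$ and $Y \sim \operatorname{Bin}(n, m/N)$, then for every continuous convex $\phi : \bR \to \bR$ one has $\E[\phi(X)] \le \E[\phi(Y)]$. The standard route to this fact is to write $X = \sum_{i=1}^n X_i$ where the $X_i$ are the indicators arising from sampling without replacement; these are negatively associated, and one can produce an explicit mixture representation over random orderings that exhibits the binomial random variable as a convex-order dominator of the hypergeometric one. First I would either invoke or briefly establish this inequality, since it is the only genuinely non-trivial input.

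Once this domination is available, I would proceed by the exponential moment method. For $t > 0$, Markov's inequality combined with Hoeffding's inequality applied to $\phi(x) = e^{tx}$ gives
\begin{equation*}
    \P\big(X \ge (1+\eps)\E[X]\big) \le e^{-t(1+\eps)\E[X]} \E[e^{tX}] \le e^{-t(1+\eps)\E[X]}\bigl(1 - p + p e^t\bigr)^n,
\end{equation*}
where $p = m/N$. Optimising over $t$ (taking $t = \ln(1+\eps)$) and using $\ln(1+x) \le x$ recovers the familiar Chernoff form $\bigl(e^\eps/(1+\eps)^{1+\eps}\bigr)^{\E[X]}$, and the elementary calculus inequality $(1+\eps)\ln(1+\eps) - \eps \ge \eps^2/3$, valid on $[0, 3/2]$, then delivers the upper tail bound $e^{-\eps^2 \E[X]/3}$.

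For the lower tail I would repeat the argument with $t < 0$; the analogous Chernoff estimate yields $e^{-\eps^2 \E[X]/2}$, which is in fact stronger than needed. Combining the two one-sided bounds via a union bound produces the factor of $2$ in the stated inequality. The only real obstacle is justifying Hoeffding's convex-function domination; everything else is standard Chernoff bookkeeping that carries over verbatim from the binomial setting, precisely because the domination lets one control the moment generating function of $X$ by that of $Y$.
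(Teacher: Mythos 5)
Your proposal is correct and is essentially the argument behind the cited source (Theorem 2.10 in Janson--\L{}uczak--Ruci\'nski, which the paper quotes without proof): Hoeffding's convex-order domination of the hypergeometric by the binomial, followed by the standard Chernoff computation, with $(1+\eps)\ln(1+\eps)-\eps \ge \eps^2/3$ on $[0,3/2]$ for the upper tail, $\eps^2/2$ for the lower tail, and a union bound giving the factor $2$. No gaps.
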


	\subsection{Expanders in dense regular graphs}

		We will use the following structural result (\Cref{lem:expander-decomposition}) due to Gruslys and Letzter~\cite{gruslys2021cycle}, which goes back to the work of Kühn, Lo, Osthus, and Staden~\cite{kuhn2015robust, kuhn2016solution}. This result serves as the first step in the proofs of both \Cref{mainthm:packingsubgraphs} and \Cref{thm:Kt-subdivision-dense}. It shows that the vertices of a dense regular graph $G$ can be partitioned into a small number of sets such that the subgraphs induced by these sets exhibit strong expansion properties and have a large minimum degree. Throughout the remainder of the paper, we refer to these subgraphs as \emph{expanders}. The notion of expansion used here is characterized by the absence of sparse cuts. Specifically, a graph $G$ has no $\zeta$-sparse cuts if, for every partition $\{X, Y\}$ of $V(G)$, the number of edges between $X$ and $Y$ is at least $\zeta |X| |Y|$. This notion of expansion was first introduced (using different terminology and with $\zeta$ as a function of $n$) in a paper by Conlon, Fox, and Sudakov~\cite{conlon2014cycle}. We will distinguish between expanders that are close to bipartite and those that are far from bipartite. Recall that a graph $G$ on $n$ vertices is said to be \emph{$\gamma$-almost-bipartite} if one can make $G$ bipartite by removing up to $\gamma n^2$ edges, and otherwise, $G$ is called \emph{$\gamma$-far-from-bipartite}. 
        
		\begin{lemma}[Lemma 2.1 from \cite{gruslys2021cycle}] \label{lem:expander-decomposition}
			Let $c \in (0,1)$ and $n_0 \in \bN$ satisfy $1/n_0 \ll c$.
			Let $G$ be a $d$-regular graph on $n$ vertices, where $n \ge n_0$ and $d \ge cn$.
			Then, there exist positive numbers $r \le \ceil{1/c}$ and $\eta, \beta, \gamma, \zeta, \delta$, where $1/n_0 \ll \eta \ll \beta \ll \gamma \ll \zeta \ll \delta \ll c$, and a partition $\{Z_1, \ldots, Z_r\}$ of $V(G)$ that satisfies the following properties.
			\begin{enumerate}[label = \rm(E\arabic*)]
				\item \label{itm:expander-decomp-1}
					$G$ has at most $\eta n^2$ edges with ends in distinct $Z_i$'s.
				\item \label{itm:expander-decomp-2}
					$G[Z_i]$ has minimum degree at least $\delta n$, for $i \in [r]$.
				\item \label{itm:expander-decomp-3}
					$G[Z_i]$ has no $\zeta$-sparse cuts, for $i \in [r]$.
				\item \label{itm:expander-decomp-4}
					$G[Z_i]$ is either $\beta$-almost-bipartite or $\gamma$-far-from-bipartite, for $i \in [r]$.
			\end{enumerate}
		\end{lemma}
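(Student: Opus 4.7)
The plan is to construct the partition $\{Z_1,\dots,Z_r\}$ by iterated splitting along sparse cuts, followed by a cleanup of low-degree vertices, and a final check of the bipartite dichotomy. I would start with $\mathcal{P}=\{V(G)\}$ and, while some $Z\in\mathcal{P}$ contains a $\zeta$-sparse cut $\{X,Y\}$ with both sides of size at least (say) $\delta n$, replace $Z$ by $X$ and $Y$. Each such split adds at most $\zeta|X||Y|\le \zeta n^2/4$ new cross-edges, so the process terminates after a bounded number of splits without the cross-edge total exceeding $\eta n^2/2$. At termination, no part admits a $\zeta$-sparse cut into two large pieces, which is a weak form of \ref{itm:expander-decomp-3}.

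Next I would handle the minimum degree and the bound on $r$ simultaneously. For each $Z_i$, let $S_i=\{v\in Z_i:d_{G[Z_i]}(v)<\delta n\}$; if $|S_i|\ge\delta n$, then $\{S_i,Z_i\setminus S_i\}$ would constitute a $\zeta$-sparse cut of $Z_i$ for appropriately chosen constants, contradicting the termination condition. Hence $|S_i|<\delta n$, so moving all such vertices into an exceptional set $E$ keeps $|E|$ small; the surviving parts satisfy \ref{itm:expander-decomp-2} and the full form of \ref{itm:expander-decomp-3}. Moreover, since the cross-edge count is at most $\eta n^2$ and each remaining vertex $v$ has degree $\ge cn$ in $G$, almost all of $v$'s neighbours lie in its own $Z_i$, forcing $|Z_i|\ge(c-o(1))n$ and hence $r\le\lceil 1/c\rceil$.

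The most delicate step is \ref{itm:expander-decomp-4}. For each $Z_i$ I would classify: either $G[Z_i]$ is $\gamma$-far-from-bipartite (and we are done), or else there is a bipartition $\{X_i,Y_i\}$ with at most $\gamma n^2$ bad (same-side) edges, in which case I must strengthen this to at most $\beta n^2$. The argument I have in mind is to take the bipartition minimising the number of bad edges and show, under the no-sparse-cut hypothesis, that this minimum is at most $\beta n^2$: if strictly more than $\beta n^2$ bad edges remained, then either some vertex admits a swap across the bipartition that strictly decreases the bad-edge count (and we iterate), or the bipartition itself would be structured enough to exhibit a $\zeta$-sparse cut, contradicting the termination of Step~1. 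The hierarchy $\beta\ll\gamma\ll\zeta$ is tuned precisely so that this local-swap argument terminates with at most $\beta n^2$ bad edges.

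Finally, I would reabsorb the vertices of $E$ into the parts where they have the most neighbours; since $|E|$ is small, this increases the cross-edge count by at most $|E|\cdot d\le\eta n^2/2$, completing \ref{itm:expander-decomp-1}, and it perturbs \ref{itm:expander-decomp-2}--\ref{itm:expander-decomp-4} only by amounts that the hierarchy absorbs. The main obstacle will be the bipartite dichotomy in \ref{itm:expander-decomp-4}: ruling out the intermediate regime where $G[Z_i]$ is $\gamma$-almost-bipartite but not $\beta$-almost-bipartite is the subtle point, and the hierarchy $\eta\ll\beta\ll\gamma\ll\zeta\ll\delta\ll c$ is engineered specifically to make the local-swap plus sparse-cut contradiction go through.
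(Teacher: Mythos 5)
The paper does not prove this lemma; it is imported verbatim from Gruslys--Letzter (and ultimately from K\"uhn--Lo--Osthus--Staden), so I am judging your sketch on its own terms. It has several genuine gaps, all stemming from the same source: you treat the constants $\eta \ll \beta \ll \gamma \ll \zeta \ll \delta$ as fixed in advance, whereas the lemma is stated existentially in these constants precisely because the proof must \emph{choose} them adaptively. Concretely: (1) In your Step~1, each split along a $\zeta$-sparse cut adds up to $\zeta|X||Y|$ cross-edges, and you need the final total to be at most $\eta n^2$ with $\eta \ll \zeta$; even a single split already exceeds this budget, so ``the process terminates \ldots without the cross-edge total exceeding $\eta n^2/2$'' is quantitatively false. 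The actual proof resolves this tension by running the splitting with a long decreasing chain of sparseness parameters and using the bound $r \le \ceil{1/c}$ on the number of parts to pigeonhole a level at which the process stabilises; the output $\zeta$ and $\eta$ are then read off from that level. (2) Your derivation of \ref{itm:expander-decomp-2} fails: if $S_i$ is the set of vertices of degree less than $\delta n$ in $G[Z_i]$, then $e(S_i, Z_i \setminus S_i) < |S_i|\delta n$, and for this to certify a $\zeta$-sparse cut you would need $|Z_i \setminus S_i| \ge (\delta/\zeta) n > n$, which is impossible since $\zeta \ll \delta$. Absence of $\zeta$-sparse cuts only yields minimum degree about $\zeta n$, not $\delta n$; the stronger bound comes from the construction controlling each vertex's cross-degree individually, not from sparseness of cuts.

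(3) The dichotomy \ref{itm:expander-decomp-4} cannot be forced for a \emph{fixed} pair $\beta \ll \gamma$ by local swaps plus expansion: take a $d$-regular graph that is the edge-disjoint union of a $d_1$-regular bipartite expander on $\{X,Y\}$ and $\sqrt{\beta\gamma}\,n$-regular graphs inside $X$ and inside $Y$. This graph has no $\zeta$-sparse cuts, yet its distance from bipartiteness is on the order of $\sqrt{\beta\gamma}\,n^2$, strictly between $\beta n^2$ and $\gamma n^2$. So the ``intermediate regime'' you flag as the subtle point is not merely subtle --- it is genuinely nonempty for fixed parameters, and the only way out is again pigeonhole: list the bipartiteness defects of the at most $\ceil{1/c}$ parts and choose $(\beta,\gamma)$ from a prearranged chain $\beta_1 \ll \gamma_1 \ll \beta_2 \ll \gamma_2 \ll \cdots$ so that no defect falls in the chosen interval. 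Finally, your reabsorption bound $|E|\cdot d \le \eta n^2/2$ does not follow from $|E| < r\delta n$, since $\delta \gg \eta$. In short, the skeleton (split along sparse cuts, clean up, classify by bipartiteness) is the right one, but every quantitative step needs the adaptive choice of constants that your write-up omits.
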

        
		The following lemma is used multiple times throughout the paper. Its proof is very similar to that of Lemma 2.2 in~\cite{gruslys2021cycle}, and relies on results concerning Hamiltonicity of robust expanders, due to Kühn, Osthus and Treglown~\cite{kuhn2010hamiltonian}. The lemma shows that expanders far from being bipartite are Hamiltonian and remain Hamiltonian even after the removal of any small set of vertices.
		However, expanders that are bipartite may not be Hamiltonian -- for example, an imbalanced bipartite graph cannot be Hamiltonian. In this case, the lemma shows that such expanders still become Hamiltonian after the removal of any small set of vertices that balances its two parts. We prove \Cref{lem:hamilton-cycle} in \Cref{sec:ham}.

		\begin{lemma} \label{lem:hamilton-cycle}
			Let $1/n \ll \eta \ll \xi \ll \gamma, \zeta \ll c$, and let $d \ge cn$.  
Suppose $G$ is an $n$-vertex graph with maximum degree at most $d$ and average degree at least $d - \eta n$, such that $G$ does not have $\zeta$-sparse cuts. Additionally, assume that $G$ is bipartite or $\gamma$-far-from-bipartite. Then the following holds. 
			\begin{enumerate}[label = \rm(HC\arabic*)]
				\item \label{lem:hamilton:bipartite} 
					If $G$ is bipartite with the bipartition $\{X, Y\}$, then, for every subset $W \subseteq V(G)$ of size at most $\xi n$ 
					satisfying $|X \setminus W| = |Y \setminus W|$ and any vertices $x \in X \setminus W$, $y \in Y \setminus W$, there is a Hamilton path in $G \setminus W$ with ends $x$ and $y$. In particular, $G \setminus W$ has a Hamilton cycle. 

				\item 
					If $G$ is $\gamma$-far-from-bipartite, then, for every subset $W \subseteq V(G)$ of size at most $\xi n$ and any two distinct vertices $x,y \in V(G) \setminus W$, there is a Hamilton path in $G \setminus W$ with ends $x$ and $y$. In particular, $G \setminus W$ has a Hamilton cycle.
			\end{enumerate}
		\end{lemma}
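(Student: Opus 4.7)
The plan is to derive both parts from the Hamilton cycle theorem for robust expanders of K\"uhn, Osthus and Treglown~\cite{kuhn2010hamiltonian}, adapting the approach used for Lemma~2.2 of \cite{gruslys2021cycle}.

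First, I would verify that $G \setminus W$ inherits the expansion and degree hypotheses of $G$ up to negligible losses. Since $|W| \le \xi n$ with $\xi \ll \zeta, \gamma$, any bipartition $\{A, B\}$ of $V(G) \setminus W$ extends to a cut of $G$ with at least $\zeta |A|\,|B|$ edges, of which all but at most $|W| \cdot d$ survive in $G \setminus W$; hence $G \setminus W$ has no $(\zeta/2)$-sparse cut. An identical calculation shows that $G \setminus W$ remains $(\gamma/2)$-far-from-bipartite in case~(HC2). The bounds $\Delta(G) \le d$ and $\bar{d}(G) \ge d - \eta n$ imply that at most $\sqrt{\eta}\,n$ vertices of $G$ have degree below $d - \sqrt{\eta}\,n$; I would absorb these into $W$, together with a balancing set of at most $\sqrt{\eta}\,n$ additional vertices in case~(HC1) to restore $|X \setminus W| = |Y \setminus W|$, producing a set $W^+$ of size at most $2\xi n$ such that $G^* := G \setminus W^+$ has minimum degree at least $(c - 3\xi)n$.

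Next, I would convert the no-sparse-cut condition on $G^*$ into the robust expansion property of \cite{kuhn2010hamiltonian}: for suitable $\nu \ll \tau \ll \zeta$ and any $S \subseteq V(G^*)$ with $\tau n \le |S| \le (1 - \tau) n$, the set $\{v : d_{G^*}(v, S) \ge \nu n\}$ has size at least $|S| + \nu n$, since otherwise its complement would induce a cut in $G^*$ of density below $\zeta/2$, contradicting the previous step. The bipartite analogue (in the sense used in \cite{kuhn2015robust}) follows by the same argument in case~(HC1). Applying the K\"uhn--Osthus--Treglown theorem in its appropriate form produces a Hamilton cycle in $G^*$; the $O(\xi n)$ vertices in $W^+ \setminus W$ are then re-inserted into the cycle by a standard absorbing step exploiting their high degree into $V(G^*)$, giving a Hamilton cycle in $G \setminus W$.

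Finally, to strengthen this to the prescribed-endpoint Hamilton $(x, y)$-path, I would use an auxiliary-vertex gadget. In case~(HC2) I add a single new vertex $z$ adjacent only to $x$ and $y$: any Hamilton cycle of the enlargement is forced to use both edges $zx$ and $zy$, and deleting $z$ yields the desired path. In case~(HC1), with $x \in X$ and $y \in Y$, I add two vertices $z_X \in X$ and $z_Y \in Y$ together with the edges $z_X z_Y$, $z_X y$, $z_Y x$; this preserves the bipartition and its balance, and any Hamilton cycle of the enlargement passes through $\cdots x\,z_Y\,z_X\,y\,\cdots$, yielding the desired path upon deletion of $z_X, z_Y$. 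The main obstacle is that the gadget vertices have degree two, so the plain Hamiltonicity theorem does not apply to the enlargement directly; I would resolve this by invoking the Hamilton-connectedness strengthening of the K\"uhn--Osthus--Treglown theorem (a robust expander of high minimum degree contains a Hamilton cycle through any prescribed short linear forest), which is obtained by a direct adaptation of their argument and has been used in similar form in \cite{kuhn2015robust, gruslys2021cycle}. The ``in particular'' Hamilton cycle assertions follow immediately.
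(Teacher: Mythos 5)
Your overall architecture (no sparse cuts $\Rightarrow$ robust expansion $\Rightarrow$ K\"uhn--Osthus--Treglown) matches the paper's, but two of your intermediate steps have genuine gaps. The first is the derivation of robust expansion itself. The implication ``no sparse cuts plus linear minimum degree $\Rightarrow$ robust expander'' is false: take an independent set $A$ of size $0.6n$ completely joined to a clique $B$ of size $0.4n$. This graph has minimum degree $0.4n$ and no $(1/4)$-sparse cuts, yet the set of vertices with at least $\nu n$ neighbours in $A$ is exactly $B$, of size $0.4n<|A|$. So your one-line justification (``otherwise its complement would induce a sparse cut'') cannot be right as stated; the deduction genuinely needs the near-regularity of the graph. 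The paper's \Cref{claim:robust-nbd} uses $\Delta(H)\le d$ together with average degree at least $d-2\xi n$ to run a double count ($e(S_1,T_1)\ge d|S_1|-3\xi n^2$, hence $e(T_1,V\setminus S_1)\le d(|T_1|-|S_1|)+3\xi n^2$) that produces the sparse cut $\{S_1\cup T_1,\, V\setminus(S_1\cup T_1)\}$; this is the crux of the whole lemma and is exactly what your sketch omits. You try to secure near-regularity instead by deleting the $\le\sqrt{\eta}\,n$ vertices of degree below $d-\sqrt{\eta}\,n$, but that creates an unfixable new problem: those vertices may have degree only $\Theta(\zeta n)\ll n/2$, and a vertex whose neighbourhood is independent along a Hamilton cycle cannot be inserted into it, so there is no ``standard absorbing step'' that puts linearly many of them back. (In the bipartite case a single-vertex insertion is impossible for parity reasons, and $x$ or $y$ could themselves be among the deleted vertices.) The paper avoids all of this by proving robust expansion for $G\setminus W$ directly from the average-degree hypothesis, with no vertex deletions and no reinsertion.

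The second gap is the endpoint gadget. Because your added vertices have degree two, you must invoke a Hamilton-connectedness / prescribed-linear-forest strengthening of the robust-expander theorem, and in case (HC1) additionally a bipartite version of it; you prove neither. The paper gets by with only the plain digraph theorem: prescribed endpoints are handled by adding the arc $xy$, deleting the other arcs at $x$ and $y$, and contracting (\Cref{cor:robust-expanders-hamilton}); and the bipartite case --- where the doubled digraph is not a robust out-expander at all (take $S=X$) --- is handled by first extracting a perfect matching via Hall's condition and then applying the theorem to an auxiliary digraph on the matching edges. Your route could perhaps be completed by importing the bipartite robust-expander machinery of K\"uhn--Lo--Osthus--Staden, but as written it rests on two unproved black boxes in addition to the failed reinsertion step above.
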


		We use the following lemma from \cite{gruslys2021cycle} in the proof of Theorem~\ref{thm:Kt-subdivision-dense}. When presented with a partition of a dense regular graph into expanders (by applying Lemma~\ref{lem:expander-decomposition}), \Cref{lem:path-forest} produces a small linear forest (i.e., a small collection of vertex-disjoint paths) whose removal balances the expanders that are close to being bipartite. A similar linear forest was also constructed in~\cite{kuhn2015robust, kuhn2016solution}.

		\begin{lemma}[Lemma 2.3 from \cite{gruslys2021cycle}] \label{lem:path-forest}
			Let $\eta, \beta, \xi, \gamma, \zeta, \delta, c \in (0,1)$ and let $n \in \bN$ satisfy $1/n \ll \eta \ll \beta \ll \xi \ll \gamma \ll \zeta \ll \delta \ll c$.
			Let $G$ be a $d$-regular graph on $n$ vertices, where $d \ge cn$. 
			Suppose that $\{Z_1, \ldots, Z_r\}$ is a partition of $V(G)$ satisfying properties \ref{itm:expander-decomp-1}-- \ref{itm:expander-decomp-4} in Lemma~\ref{lem:expander-decomposition}, where $r \le \ceil{1/c}$. For $i \in [r]$ such that $G[Z_i]$ is $\beta$-almost-bipartite, let $\{X_i, Y_i\}$ be a partition of $Z_i$ maximising $e_G(X_i, Y_i)$. Then there is a linear forest $H$ in $G$ with the following properties.
			\begin{enumerate}[label = \rm(P\arabic*)]
				\item \label{itm:Hforest-1}
					$|V(H)| \le \xi n$.
				\item \label{itm:Hforest-2} 
					$H$ has no isolated vertices.
				\item \label{itm:Hforest-3}
					For each $i \in [r]$, $Z_i$ contains either zero or two leaves of $H$. Moreover, for each $i \in [r]$ such that $G[Z_i]$ is $\beta$-almost-bipartite and $Z_i$ contains two leaves of $H$, one of the leaves is in $X_i$ and the other leaf is in $Y_i$.
				\item \label{itm:Hforest-4}
					For each $i \in [r]$ such that $G[Z_i]$ is $\beta$-almost-bipartite, $|X_i \setminus V(H)| = |Y_i \setminus V(H)|$.
			\end{enumerate}
		\end{lemma}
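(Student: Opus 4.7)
The plan is to construct $H$ as a small disjoint union of short paths whose role is to correct the imbalance $b_i \coloneqq |X_i| - |Y_i|$ inside every $\beta$-almost-bipartite $Z_i$. A first observation is that these imbalances are small. By $d$-regularity of $G$,
\begin{equation*}
    d(|X_i| - |Y_i|) \;=\; 2\bigl(e_G(X_i) - e_G(Y_i)\bigr) + \bigl(e_G(X_i, \overline{Z_i}) - e_G(Y_i, \overline{Z_i})\bigr);
\end{equation*}
using that $\{X_i,Y_i\}$ maximises $e_G(X_i,Y_i)$ in the $\beta$-almost-bipartite graph $G[Z_i]$ (so $e_G(X_i)+e_G(Y_i) \le \beta n^2$), together with \ref{itm:expander-decomp-1} and $\eta \ll \beta$, this yields $|b_i| \le 3\beta n / c$.

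It therefore suffices to find, for each bipartite-like $Z_i$ with $b_i \ne 0$, a bounded number of short paths whose total contribution to $|V(H) \cap X_i| - |V(H) \cap Y_i|$ equals $b_i$. The driving observation is that, because $G[Z_i]$ is almost bipartite, each maximal $Z_i$-segment of a path $P$ is essentially alternating and contributes $+1$, $0$, or $-1$ to $|V(P) \cap X_i| - |V(P) \cap Y_i|$ according to whether its two endpoints inside $P$ both lie in $X_i$, one in each, or both in $Y_i$. Since \ref{itm:Hforest-3} forces the two leaves in each bipartite-like $Z_i$ to be one in $X_i$ and one in $Y_i$, the required imbalance $b_i$ must be produced entirely by \emph{through-excursions}: a path enters $Z_i$, traverses an alternating sub-path, and exits via a crossing edge whose $Z_i$-endpoint has the same colour as the entry. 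For each bipartite-like $Z_i$ with $b_i \ne 0$ I therefore designate a partner: if some $Z_j$ is $\gamma$-far-from-bipartite, it serves as a universal partner (leaves there carry no colour constraint); otherwise I pair up the bipartite-like parts cyclically based on the signs of their $b_i$'s. Each through-excursion is built by combining a short sequence of inter-part crossing edges (from those provided by \ref{itm:expander-decomp-1}, chosen so that the $Z_i$-endpoint has the prescribed colour) with alternating sub-paths inside each $Z_i$ of length $O(|b_i|)$, constructed via \Cref{lem:hamilton-cycle} applied to a random subset $S_i \subseteq Z_i$ of size $\Theta(|b_i|+1)$ containing the prescribed endpoints; such a random $S_i$ inherits the properties \ref{itm:expander-decomp-2}--\ref{itm:expander-decomp-4} of $G[Z_i]$ with room to spare.

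Verification of the conclusions \ref{itm:Hforest-1}--\ref{itm:Hforest-4} is then routine: \ref{itm:Hforest-1} follows from $\sum_i |b_i| \le r \cdot 3\beta n / c \le 3\beta n / c^2 \ll \xi n$ (using $\beta \ll \xi$ and $r \le \lceil 1/c \rceil$); \ref{itm:Hforest-2} is immediate since each component has at least two vertices; and \ref{itm:Hforest-3}--\ref{itm:Hforest-4} hold by construction. The main obstacle I expect is the combinatorial pairing/chaining step: when no $\gamma$-far-from-bipartite $Z_j$ exists and the signs of the $b_i$'s are unfavourable, selecting crossing edges whose $Z_i$-endpoints carry the correct colours (they are scarce, since \ref{itm:expander-decomp-1} bounds the total number of inter-part edges by $\eta n^2$) and handling the parities of the $b_i$'s requires a delicate case analysis.
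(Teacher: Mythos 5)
The paper does not actually prove this lemma (it is imported verbatim from \cite{gruslys2021cycle}), so I assess your argument on its own terms. Your opening step is correct and is the right reduction: the identity $d(|X_i|-|Y_i|) = 2\bigl(e_G(X_i)-e_G(Y_i)\bigr) + \bigl(e_G(X_i,\comp{Z_i}) - e_G(Y_i,\comp{Z_i})\bigr)$, together with \ref{itm:expander-decomp-1} and the max-cut choice of $\{X_i,Y_i\}$, gives $|b_i| \le 3\beta n/c$, and \ref{itm:Hforest-4} is equivalent to forcing $|X_i\cap V(H)| - |Y_i\cap V(H)| = b_i$.

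The gap is the assertion that, because of \ref{itm:Hforest-3}, ``the required imbalance must be produced entirely by through-excursions.'' That is false, and the whole construction rests on it. A component of $H$ lying entirely inside $Z_i$, with one leaf in $X_i$ and one in $Y_i$, contributes $+k$ to the imbalance if it uses $k$ edges inside $X_i$ and alternates otherwise; these internal non-crossing edges are a legitimate, and in fact indispensable, source of correction. Conversely, your through-excursions consume crossing edges incident to $X_i$ (two per unit of imbalance), but \ref{itm:expander-decomp-1} only bounds the crossing edges from \emph{above}: it is entirely possible that $e_G(Z_i,\comp{Z_i})=0$ (for instance $r=1$, or $Z_i$ a union of connected components of $G$) while $b_i\neq 0$, in which case your scheme has nothing to work with. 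Your own identity tells you where the resource actually lives: for $b_i>0$ it gives $d\,b_i \le 2e_G(X_i) + e_G(X_i,\comp{Z_i})$, so the excess is witnessed either by edges inside $X_i$ or by edges leaving $X_i$ for $\comp{Z_i}$, and a correct construction must exploit whichever of the two happens to be present (this is what the proof in \cite{gruslys2021cycle}, following \cite{kuhn2015robust,kuhn2016solution}, does). A secondary flaw: invoking \Cref{lem:hamilton-cycle} on a random $S_i\subseteq Z_i$ of size $\Theta(|b_i|+1)$ is not legitimate, since $|b_i|$ may be $O(1)$ and such a set inherits none of the expansion hypotheses; for building the short alternating segments you need, the minimum degree $\ge \delta n/2$ of $G[X_i,Y_i]$ already suffices and no Hamiltonicity input is required.
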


	\subsection{The regularity lemma and the blow-up lemma}
		Given a graph $G$, and sets $X, Y \subseteq V(G)$, let $d(X,Y) \coloneqq \frac{e(X, Y)}{|X||Y|}$.
		The density of a bipartite graph $G$ with the bipartition $\{A, B\}$ is denoted by $d(A,B).$ 

		Given $\eps > 0$, we say that a bipartite graph $G$ with the bipartition $\{A, B\}$ is \emph{$\eps$-regular} if, for all sets $X \subseteq A$ and $Y \subseteq B$ with $|X| \ge \eps|A|$ and $|Y | \ge \eps |B|$, we have $|d(A, B) - d(X, Y )| < \eps.$ The following is the degree form of Szemer\'edi's Regularity Lemma which can be easily derived from the classical version (see, e.g.,~\cite{bollobas2013modern} and~\cite{diestel2005graph}). It is a slight variation of Lemma~2.4 in~\cite{kuhn2005packings}.

		\begin{lemma}[regularity lemma] \label{lem:regularity}
			For every $\eps > 0$, there exists $M = M(\eps)$ such that the following holds. Let $0 \le \mu \le 1$, and let $G$ be an $n$-vertex graph.
			Then there is a partition $\{V_0, \ldots, V_m\}$ of $V(G)$ and a spanning subgraph $G'$ of $G \setminus V_0$, such that the following properties hold.
			\begin{enumerate}[label = \rm(R\arabic*)]
				\item \label{itm:reg-1}
					$m \le M$.
				\item \label{itm:reg-2}
					If $G$ is bipartite, then every $V_i$ with $i \in [m]$ is contained in one of the two parts of $G$. 
				\item \label{itm:reg-3}
					$|V_0| \le \eps n$ and $|V_1| = \ldots = |V_m| \le \eps n$.
				\item \label{itm:reg-4}
					$d_{G'}(v) \ge d_G(v) - (\mu + \eps)n$ for every $v \in V(G')$.
				\item \label{itm:reg-5}
					$e_{G'}(V_i) = 0$ for every $i \in [m]$.
				\item \label{itm:reg-6}
					For all $1 \le i,j \le m$, the graph $G'[V_i, V_j]$ is $\eps$-regular with density either $0$ or more than $\mu$.
			\end{enumerate}
		\end{lemma}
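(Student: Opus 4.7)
The plan is to derive this degree-form lemma from the classical Szemer\'edi regularity lemma in the standard way. Fix an auxiliary parameter $\eps' \le \eps^2/100$, and apply the classical regularity lemma with parameter $\eps'$ and a lower bound $2/\eps$ on the number of parts. When $G$ is bipartite with bipartition $\{A, B\}$, I would invoke the well-known strengthening that refines a prescribed initial partition, taking the initial partition to be $\{A, B\}$; this immediately yields (R2). The output is an equitable partition $\{V_0', V_1, \ldots, V_m\}$ with $2/\eps \le m \le M = M(\eps)$, $|V_0'| \le \eps' n$, equal-sized clusters $V_1, \ldots, V_m$ of size at most $\eps n/2$, and at most $\eps' \binom{m}{2}$ pairs $(V_i, V_j)$ that are $\eps'$-irregular.

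I would then construct $G'$ from $G$ by deleting every edge that falls into one of the following categories: both ends in the same cluster (this gives (R5)); both ends in an $\eps'$-irregular pair, or both ends in a regular pair of density at most $\mu$ (together these give (R6), since every surviving inter-cluster pair is $\eps'$-regular---hence $\eps$-regular---with density more than $\mu$); or at least one end in $V_0'$. For every vertex $v \notin V_0'$, the degree losses from the intra-cluster, low-density, and $V_0'$ deletions are at most $\eps n/2$, $\mu n$, and $\eps' n$ respectively. The contribution from irregular pairs is bounded only on average: a standard double-counting argument, using that at most $\eps' n^2$ edges run through irregular pairs, shows that only $\sqrt{\eps'} \cdot n$ vertices are incident to more than $\sqrt{\eps'} \cdot n$ such edges.

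Finally, take $V_0$ to be the union of $V_0'$, the at most $\sqrt{\eps'} \cdot n$ \emph{bad} vertices just identified, and up to $m$ further vertices removed to re-equalise the remaining clusters. Provided $n$ is large in terms of $M(\eps)$, this gives $|V_0| \le \eps n$ and hence (R3); and for every $v \in V(G')$, the total degree loss is at most $\eps n/2 + \mu n + \eps' n + \sqrt{\eps'} \cdot n \le (\mu + \eps) n$, giving (R4). The one genuine subtlety is reconciling (R2) with the equal-sizes requirement of (R3); this is handled by the standard variant of Szemer\'edi's lemma that refines a prescribed initial partition (equivalently, apply the classical lemma to $G$ and then refine each cluster by intersecting with $A$ and $B$, discarding the smaller intersection into $V_0'$ at the cost of slightly decreasing $\eps'$).
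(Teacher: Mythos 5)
The paper does not prove this lemma; it quotes it as the standard ``degree form'' of the regularity lemma (a slight variation of Lemma~2.4 in Kühn--Osthus), so your proposal has to stand on its own as a derivation from the classical version. The overall route --- apply the classical lemma (with a prescribed initial partition $\{A,B\}$ in the bipartite case), then delete intra-cluster edges, edges of irregular pairs, edges of low-density pairs, and edges meeting the exceptional set --- is the right one, but two of your per-vertex estimates have genuine gaps. The first is the claim that every $v \notin V_0'$ loses at most $\mu n$ edges to the low-density deletions. This is false as stated: if $(V_i,V_j)$ is $\eps'$-regular with density at most $\mu$, a vertex $v \in V_i$ may still have all of $V_j$ as neighbours; regularity only guarantees that fewer than $\eps'|V_i|$ vertices of $V_i$ have more than $(d(V_i,V_j)+\eps')|V_j|$ neighbours in $V_j$, and a single vertex can be exceptional for many indices $j$, losing up to $n-|V_i|$ edges in total. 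One must additionally move to $V_0$ the vertices that are exceptional for more than, say, $\sqrt{\eps'}m$ low-density pairs. This is repairable because there are at most $\sqrt{\eps'}|V_i|$ such vertices \emph{per cluster}, so re-equalising costs only a $2\sqrt{\eps'}$-fraction of each cluster, which also preserves $\eps$-regularity by the slicing lemma.

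The second, more serious, gap is your treatment of irregular pairs. You remove the individual vertices incident to more than $\sqrt{\eps'}n$ irregular-pair edges and then claim that ``up to $m$ further vertices'' suffice to re-equalise. The roughly $2\sqrt{\eps'}n$ bad vertices come with no per-cluster bound: they may all lie in a single cluster, and since $n/m$ can be much smaller than $\sqrt{\eps'}n$ (the classical lemma only bounds $m$ by a tower-type $M(\eps')$), an entire cluster can consist of bad vertices. Re-equalisation then requires deleting up to $2\sqrt{\eps'}n$ vertices from \emph{each} of the $m$ clusters, i.e.\ up to $2\sqrt{\eps'}mn$ vertices in total, which cannot be driven below $\eps n$ by shrinking $\eps'$ (that only increases $M(\eps')$); moreover, removing a large fraction of a cluster destroys the regularity of its pairs. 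The standard fix operates at cluster level: discard into $V_0$ every cluster lying in more than $\sqrt{\eps'}m$ irregular pairs. There are at most $\sqrt{\eps'}m$ such clusters, hence at most $\sqrt{\eps'}n$ vertices are discarded, every surviving vertex loses at most $\sqrt{\eps'}m \cdot (n/m) = \sqrt{\eps'}n$ edges to irregular pairs, and the remaining clusters keep their common size. (A minor further point: your parenthetical ``equivalent'' bipartite fix --- intersecting each cluster with $A$ and $B$ and discarding the smaller intersection --- is not workable, since a cluster may be split evenly and one could discard a constant fraction of $V(G)$; the initial-partition version of the regularity lemma that you cite first is the correct tool.)
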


		The sets $V_1, \ldots, V_m$ in \Cref{lem:regularity} are called \emph{clusters}, and the set $V_0$ is called the \emph{exceptional set}. Given clusters and $G'$ as in Lemma~\ref{lem:regularity}, the \emph{reduced graph} $\Gamma$ is the graph whose vertices are $V_1, \ldots, V_m$ (note that $V_0$ is omitted here) and in which $V_i$ is adjacent to $V_j$ whenever $G'[V_i, V_j]$ is $\eps$-regular and has density more than $\mu$. Thus, $V_iV_j$ is an edge of $\Gamma$ if and only if $G'$ has an edge between $V_i$ and $V_j$.

		Given $\eps > 0$ and $d \in [0, 1]$, a bipartite graph $G$ with the bipartition $\{A, B\}$ is called \emph{$(\eps, d)$-super-regular}, if all sets $X \subseteq A$ and $Y \subseteq B$ with $|X| \ge \eps |A|$ and $|Y| \ge \eps |B|$ satisfy $d(X,Y) > d$, and moreover, every vertex in $A$ has degree more than $d|B|$, and every vertex in $B$ has degree more than $d|A|$. 

		We will need the following special case of the blow-up lemma of Koml\'os, S\'ark\"{o}zy and Szemer\'edi~\cite{komlos1997blow}.

		\begin{lemma}[Special case of the blow-up lemma \cite{komlos1997blow}] \label{lem:blowup}
			For every $d > 0$ and $\Delta$, there
			exists a positive number $\eps_0 = \eps_0(d, \Delta)$ such that for all $\eps \le \eps_0$ the following holds. If $H$ is a balanced complete bipartite graph with $n$ vertices in each part satisfying $\Delta(H) \le \Delta$, then every $(\eps, d)$-super-regular bipartite graph with $n$ vertices in each part contains $H$ as a subgraph.
		\end{lemma}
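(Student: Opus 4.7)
This is the bipartite special case of the Komlós--Sárközy--Szemerédi blow-up lemma, and my plan is to follow the classical two-phase embedding strategy. Write $G$ for the $(\eps, d)$-super-regular bipartite graph with parts $A, B$ of size $n$, and let $H$ be the target bipartite graph with parts $X, Y$ of size $n$ and $\Delta(H) \le \Delta$. The goal is to construct an edge-preserving bijection $\phi \colon V(H) \to V(G)$ with $\phi(X) = A$ and $\phi(Y) = B$. I would do this by embedding the vertices of $X$ one by one during a first randomised greedy phase, and completing the embedding of $Y$ via a matching argument in a second phase.

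In the greedy phase, I would first reserve a small random buffer $B_0 \subseteq B$ of size roughly $\sqrt{\eps}\, n$, and then order the vertices of $X$ arbitrarily as $x_1, \ldots, x_n$. For each $y \in Y$, I maintain a candidate set $C(y) \subseteq B$ consisting of unused vertices of $B$ that are still adjacent in $G$ to $\phi(x)$ for every already-embedded $H$-neighbor $x$ of $y$. When embedding $x_i$, I pick an unused vertex $\phi(x_i) \in A$ so that $|C(y) \cap N_G(\phi(x_i))| \ge (d - \eps)\,|C(y)|$ for every $y \in N_H(x_i)$. Using $\eps$-regularity of $(A, B)$ together with $\Delta(H) \le \Delta$, a union bound shows that such a choice exists as long as all relevant candidate sets have size at least $\eps n$, and tracking their shrinkage multiplicatively keeps $|C(y)|$ near $d^{\deg_H(y)} |B|$ throughout this phase.

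In the matching phase, once $X$ is placed I still need to embed all of $Y$ into the unused part of $B$ respecting adjacencies, which amounts to finding a system of distinct representatives for the candidate sets $\{C(y) : y \in Y\}$ via Hall's theorem in an auxiliary bipartite graph. The \emph{main obstacle} is verifying Hall's condition at this final step, since the candidate sets are correlated and their sizes vary. The standard workaround, which I would adopt, is to exploit the reserved buffer $B_0$: a concentration argument shows that with positive probability over its random selection, every vertex of $A$ retains $(d - o(1))|B_0|$ neighbors in $B_0$, and this pseudorandomness persists through the greedy phase. Combined with the lower bound on individual degrees guaranteed by super-regularity (rather than mere $\eps$-regularity), this ensures that for any subset $T \subseteq Y$ the union $\bigcup_{y \in T} C(y)$ is large enough to satisfy Hall's condition. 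Throughout, the bound $\Delta(H) \le \Delta$ is essential both for the union bound in the greedy phase and to keep the multiplicative error in the candidate sets under control.
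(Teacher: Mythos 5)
The paper does not actually prove this lemma: it is quoted as a special case of the blow-up lemma of Koml\'os, S\'ark\"ozy and Szemer\'edi and used as a black box, so the comparison is between your sketch and the original argument of \cite{komlos1997blow}. Your outline does follow that template — a randomised greedy phase driven by candidate sets, plus a final Hall/K\"onig matching phase supported by a random buffer — and these are indeed the right ingredients.

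As written, though, your greedy phase has a genuine gap at the endgame. When you embed $x_i$, $\eps$-regularity lets you discard fewer than $\eps n$ ``bad'' vertices of $A$ for each candidate set $C(y)$ with $y \in N_H(x_i)$, hence fewer than $\Delta \eps n$ in total; but you must also avoid the $i-1$ vertices of $A$ already used as images. Once $i > n - \Delta \eps n$ there may be no admissible vertex left, so you cannot embed \emph{all} of $X$ greedily. This is exactly the difficulty that makes the blow-up lemma nontrivial, and it is not repaired by your buffer $B_0$, which sits on the $B$-side and only addresses Hall's condition for $Y$. The genuine proof reserves a small buffer on the $A$-side (equivalently, a buffer of vertices of $X$) as well, embeds only the non-buffer vertices greedily, and finishes \emph{both} sides with perfect-matching arguments, with the minimum-degree part of super-regularity (not mere $\eps$-regularity) used to make those matchings exist. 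Relatedly, your verification of Hall's condition for $Y$ is only gestured at: the easy sufficient condition (every $b \in B$ being a candidate for all but at most $\min_y |C(y)|$ vertices of $Y$) fails here, since a vertex $b$ can be excluded from up to $\Delta(1-d)n$ candidate sets, so the concentration argument over the random buffer and a randomised embedding order are genuinely needed rather than optional refinements. In short, the skeleton is right, but the two steps you would need to make rigorous — the last $\Theta(\Delta\eps n)$ vertices of the greedy phase and the Hall condition — are precisely the hard core of \cite{komlos1997blow}, and the paper sidesteps both by citing that result.
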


		We also need the following standard fact concerning super-regular graphs.

		\begin{prop}[Proposition 2.3 in~\cite{kuhn2005packings}] \label{claim:super-regular}
			Every $\eps$-regular bipartite graph $G$ with the bipartition $\{A, B\}$ and density $d > 2\eps$, can be made into a $(\eps/(1-\eps),d-2\eps)$-super-regular graph by removing $\eps|A|$ vertices from $A$ and $\eps |B|$ vertices from $B$.
		\end{prop}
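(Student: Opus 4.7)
\textbf{Proof plan for \Cref{claim:super-regular}.}

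The plan is to identify a small set of ``low-degree'' vertices on each side, discard them, pad each discarded set up to the prescribed size with arbitrary additional vertices, and then verify that the remaining bipartite graph inherits both regularity (with a slightly weaker parameter) and the degree condition required for super-regularity.

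First I would bound the number of low-degree vertices on each side using $\eps$-regularity. Let $A_{\mathrm{bad}} \coloneqq \{v \in A : d_G(v, B) \le (d - \eps)|B|\}$. If $|A_{\mathrm{bad}}| \ge \eps|A|$, then taking $X = A_{\mathrm{bad}}$ and $Y = B$ would give $d(X, Y) \le d - \eps$, contradicting $|d(A,B) - d(X,Y)| < \eps$ which is forced by $\eps$-regularity. Hence $|A_{\mathrm{bad}}| < \eps|A|$, and symmetrically the analogous set $B_{\mathrm{bad}} \subseteq B$ satisfies $|B_{\mathrm{bad}}| < \eps|B|$. I would then choose sets $A_0 \supseteq A_{\mathrm{bad}}$ and $B_0 \supseteq B_{\mathrm{bad}}$ with $|A_0| = \eps|A|$ and $|B_0| = \eps|B|$ (topping up arbitrarily), set $A' \coloneqq A \setminus A_0$ and $B' \coloneqq B \setminus B_0$, and let $G'$ denote the induced bipartite subgraph $G[A', B']$.

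Next I would verify the two conditions in the definition of $(\eps/(1-\eps), d-2\eps)$-super-regularity. For the density condition, take $X \subseteq A'$ and $Y \subseteq B'$ with $|X| \ge \frac{\eps}{1-\eps}|A'|$ and $|Y| \ge \frac{\eps}{1-\eps}|B'|$. Since $|A'| = (1-\eps)|A|$ and $|B'| = (1-\eps)|B|$, these thresholds translate to $|X| \ge \eps|A|$ and $|Y| \ge \eps|B|$, so $\eps$-regularity of $G$ yields $d_{G'}(X,Y) = d_G(X,Y) > d - \eps > d - 2\eps$. For the minimum-degree condition, every $v \in A'$ satisfies $d_G(v, B) > (d-\eps)|B|$ since $v \notin A_{\mathrm{bad}}$, so
\begin{equation*}
    d_{G'}(v) \ge d_G(v,B) - |B_0| > (d - \eps)|B| - \eps|B| = (d - 2\eps)|B| > (d - 2\eps)|B'|,
\end{equation*}
using $|B'| < |B|$ and $d > 2\eps$; the symmetric inequality holds for every $v \in B'$. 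This gives exactly $(\eps/(1-\eps), d - 2\eps)$-super-regularity.

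There is no real obstacle here: the whole argument is a direct two-step consequence of the definition of $\eps$-regularity applied first to a hypothetical large set of low-degree vertices and then to arbitrary large subsets of $A'$ and $B'$. The only mild bookkeeping is making sure the revised parameters $\eps/(1-\eps)$ and $d-2\eps$ line up with the rescaled part sizes $|A'| = (1-\eps)|A|$ and $|B'| = (1-\eps)|B|$, which is precisely why the statement gains a factor of $1/(1-\eps)$ on the regularity parameter and loses $2\eps$ (rather than $\eps$) on the density.
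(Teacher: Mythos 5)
Your argument is correct and is the standard proof of this standard fact; the paper itself does not prove \Cref{claim:super-regular} but cites it directly from K\"uhn--Osthus (Proposition 2.3 in~\cite{kuhn2005packings}), where essentially this same argument appears. All the steps check out: the bound $|A_{\mathrm{bad}}| < \eps|A|$ via applying $\eps$-regularity to $(A_{\mathrm{bad}}, B)$, the translation of the threshold $\frac{\eps}{1-\eps}|A'| = \eps|A|$, and the degree chain $d_{G'}(v) > (d-\eps)|B| - \eps|B| = (d-2\eps)|B| \ge (d-2\eps)|B'|$ are exactly the intended bookkeeping.
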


	\subsection{The K\H{o}v\'ari--S\'os--Tur\'an theorem}




		The following is an unbalanced variant of the well-known K\H{o}v\'ari--S\'os--Tur\'an theorem~\cite{kHovari1954problem}.

		\begin{lemma} \label{lem:unbalanced-kst}
			Let $t \ge 2$, let $\delta > 0$, let $G$ be a bipartite graph on at most $n$ vertices with the bipartition $\{X,Y\}$ such that $|X| \ge 2t \left(\frac{e}{\delta}\right)^t$ and every vertex in $X$ has at least $\delta n$ neighbours in $Y$. Then $G$ contains a $K_{t,t}$.
		\end{lemma}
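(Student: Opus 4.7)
The approach is the classical double-counting argument of K\H{o}v\'ari--S\'os--Tur\'an, adapted to the unbalanced setting. Assume for contradiction that $G$ contains no copy of $K_{t,t}$, and count the number $N$ of pairs $(x, T)$ with $x \in X$, $T \subseteq Y$, $|T|= t$, and $T \subseteq N_G(x)$.

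For the upper bound, observe that for any fixed $t$-subset $T \subseteq Y$ there can be at most $t-1$ vertices $x \in X$ with $T \subseteq N_G(x)$; otherwise, any $t$ such vertices together with $T$ would form a copy of $K_{t,t}$. Hence $N \leq (t-1)\binom{|Y|}{t} \leq (t-1)\binom{n}{t}$. For the lower bound, since the function $m \mapsto \binom{m}{t}$ is monotone increasing and every $x \in X$ satisfies $d_G(x) \geq \delta n$, we get $N = \sum_{x \in X}\binom{d_G(x)}{t} \geq |X| \binom{\delta n}{t}$. Combining the two bounds yields
\[
|X| \leq (t-1) \cdot \frac{\binom{|Y|}{t}}{\binom{\delta n}{t}} \leq (t-1)\left(\frac{n}{\delta n - t + 1}\right)^t,
\]
the last step using that $\binom{|Y|}{t}/\binom{\delta n}{t}$ is a product of $t$ ratios, each at most $n/(\delta n - t + 1)$.

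The final step is a clean arithmetic estimate. From $|X| \leq n$ and the hypothesis $|X| \geq 2t(e/\delta)^t$ we get $n \geq 2t(e/\delta)^t$, which implies $\delta n \geq 2t$ (using $\delta \le 1 < e$). This lets us replace $\delta n - t + 1$ by $\delta n/2$, giving the bound $|X| \leq (t-1)(2/\delta)^t$. It then suffices to verify the elementary inequality $(t-1)(2/\delta)^t < 2t (e/\delta)^t$ for every $t \geq 2$, which follows from $(2/e)^t < 2t/(t-1)$ since the left side is at most $(2/e)^2 < 1$ and the right side is at least $2$. This contradicts $|X| \geq 2t(e/\delta)^t$ and completes the proof.

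The argument is essentially routine; the only point requiring minor care is ensuring that $\delta n$ is comfortably larger than $t$ so the lower bound $\binom{\delta n}{t} \geq (\delta n/2)^t/t!$ is valid, but this is automatic from the size hypothesis on $X$. No new ideas beyond the standard K\H{o}v\'ari--S\'os--Tur\'an template are needed, and I would expect no substantive obstacle in the write-up.
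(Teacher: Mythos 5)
Your proof is correct and follows essentially the same route as the paper's: both are the standard K\H{o}v\'ari--S\'os--Tur\'an double count of pairs $(x,T)$ with $x\in X$, $T\subseteq N_G(x)$, $|T|=t$, bounded above by $(t-1)\binom{|Y|}{t}$ and below by $|X|\binom{\delta n}{t}$. The only difference is cosmetic: the paper finishes with the estimates $\binom{|Y|}{t}\le (e|Y|/t)^t$ and $\binom{\delta n}{t}\ge(\delta n/t)^t$ to get $|X|\le t(e/\delta)^t$ directly, whereas you use the ratio-of-binomials bound together with the (correct, and slightly more careful) observation that the hypotheses force $\delta n\ge 2t$.
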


		\begin{proof}
			Suppose for a contradiction that $G$ is $K_{t,t}$-free. A copy of $K_{1, t}$ in $G$ is called a \emph{$t$-star}, the vertex of degree $t$ in a $t$-star is called its \emph{center}, and the $t$ vertices of degree $1$ in a $t$-star are called its \emph{leaves}. Since $G$ is $K_{t,t}$-free, the number of $t$-stars in $G$ whose centre is in $X$ and whose $t$ leaves are in $Y$ is at most
			\begin{equation*}
				(t-1)\binom{|Y|}{t} \le t \left(\frac{e|Y|}{t}\right)^t \le \frac{(en)^t}{t^{t-1}}.
			\end{equation*}
			On the other hand, since every vertex in $X$ has at least $\delta n$ neighbours in $Y$, the number of such $t$-stars in $G$ is at least
			\begin{equation*}
				|X| \binom{\delta n}{t} \ge |X| \left(\frac{\delta n}{t}\right)^t.
			\end{equation*}
			Thus, we obtain $|X| \le t \left(\frac{e}{\delta}\right)^t.$
			This is a contradiction, proving the lemma.
		\end{proof}

\section{Balancing the expanders} \label{sec:balancing}

	As discussed in the proof sketch, our proof of \Cref{mainthm:packingsubgraphs} starts by 
	partitioning the vertex set of our dense regular graph $G$ into a small number of expanders (using Lemma~\ref{lem:expander-decomposition}). A natural strategy for finding a $K_{t, t}$-packing in $G$ is to find a $K_{t, t}$-packing in each of these expanders.
	However, if any of these expanders happens to be a rather imbalanced bipartite graph, then it is impossible to find a $K_{t, t}$-packing in it covering all but a constant number of vertices. 
	To overcome this difficulty, we need the following key lemma which shows that we can partition the vertex set of any dense regular graph $G$ into a set $L$ that contains a perfect $K_{t,t}$-packing, a small set of constant size, and a small number of expanders, each of which is either far from bipartite or bipartite and \emph{balanced}.
	Using \Cref{cor:Ktt-packing-expander} we can then find a $K_{t,t}$-packing in each of these expanders covering all but a constant number of its vertices, giving us the desired $K_{t,t}$-packing in $G$.

	\begin{lemma} \label{lem:balancing}
		Let $0 < c \le 1$, let $t \ge 2$ be an integer, and let $n_0$ be sufficiently large. Suppose that $G$ is a $d$-regular graph on $n$ vertices, where $d \ge cn$ and $n \ge n_0$. 
		Then there exist positive numbers $\eta, \gamma, \zeta, r$, with $r \le \ceil{1/c}$ and $1/n_0 \ll \eta \ll \zeta \ll \gamma  \ll c, 1/t$ such that the following holds.
		There exist pairwise disjoint sets $Z_1, \ldots, Z_{r}, L \subseteq V(G)$ and a subgraph $G' \subseteq G$ with $V(G') = V(G)$ satisfying the following properties. 
		\begin{enumerate}[label = \rm(B\arabic*)]
			\item \label{lem:balance:leftover}
				$|V(G) \setminus (Z_1 \cup \ldots \cup Z_{r} \cup L)| \le 64r^2 \cdot \frac{t}{c} \cdot \left(\frac{e}{8\zeta}\right)^t$.

			\item \label{lem:balance:highaveragedegree}
				For every $i \in [r]$, $G'[Z_i]$ has average degree at least $d - \eta |Z_i|$. 

			\item \label{lem:balance:nosparsecut}
				For every $i \in [r]$, $G'[Z_i]$ has no $\zeta$-sparse cuts.
			\item \label{lem:balance:dichotomy}
				For every $i \in [r]$, $G'[Z_i]$ is either bipartite and balanced, or one needs to remove at least $\gamma n^2$ edges from $G'[Z_i]$ to make it bipartite.
			\item \label{lem:balance:perfectpackL}
				$G[L]$ has a perfect $K_{t,t}$-packing.
		\end{enumerate}
	\end{lemma}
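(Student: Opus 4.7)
The plan is to start from the expander decomposition guaranteed by Lemma~\ref{lem:expander-decomposition}, then construct $L$ by iteratively selecting vertex-disjoint $K_{t,t}$-copies across distinct expanders, chosen so that after their removal each bipartite expander is balanced and each far-from-bipartite expander remains so. Concretely, I apply Lemma~\ref{lem:expander-decomposition} to obtain $\{Z_1, \ldots, Z_r\}$ with auxiliary parameters $\eta_0 \ll \beta \ll \gamma_0 \ll \zeta_0 \ll \delta_0 \ll c, 1/t$. For each $\beta$-almost-bipartite $G[Z_i]$, fix a bipartition $\{X_i, Y_i\}$ maximising $e_G(X_i, Y_i)$, WLOG $|X_i| \ge |Y_i|$, with imbalance $\delta_i = |X_i| - |Y_i|$; a standard double-count using $d$-regularity together with $e_G(X_i), e_G(Y_i) \le \beta n^2$ forces $\delta_i = O(\beta n / c)$. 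Let $G'$ be $G$ with all $\le \beta n^2$ bad edges (those inside $X_i$ or inside $Y_i$ for each bipartite $Z_i$) deleted, so that $G'[Z_i]$ is genuinely bipartite in the bipartite case. The key auxiliary object is $F$, the subgraph of $G$ whose edges cross distinct $Z_i$'s; by \ref{itm:expander-decomp-1}, $e(F) \le \eta_0 n^2$.

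The balancing idea is that if $K_{t,t}$-copies are selected between pairs of parts $(P_a, P_b)$ in numbers proportional to $e_F(P_a, P_b)$, then the number of vertices removed from a part $P_a$ becomes proportional to $e_F(P_a, V(G) \setminus P_a)$. For each bipartite $Z_i$, the $d$-regularity identity
\begin{equation*}
e_F(X_i, V(G) \setminus X_i) - e_F(Y_i, V(G) \setminus Y_i) = d \cdot \delta_i - 2(e_G(X_i) - e_G(Y_i))
\end{equation*}
lets me tune the proportionality constant so that the extra vertices removed from $X_i$ exactly compensate $\delta_i$, with any residual modulo $t$ absorbed by at most $r(t-1)$ bookkeeping moves into the leftover.

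To make the $K_{t,t}$-selection feasible via Lemma~\ref{lem:unbalanced-kst}, I first run a \emph{degree-reduction} procedure on $F$: iteratively, while there is a vertex $v \in Z_i$ with $d_F(v, Z_j) \ge \tau$ for some $j \ne i$ (for an appropriate threshold $\tau$), move $v$ from $Z_i$ into the part of $Z_j$ that preserves the optimality of $\{X_j, Y_j\}$, update $F$, and maintain carefully designed invariants that strictly decrease some monovariant. The invariants are designed so that the total number of moves is $O(1)$ and, at termination, every bipartite subgraph $F[P_a, P_b]$ either has bounded maximum degree or has very few edges (with the constantly many large-degree vertices themselves discarded to the leftover). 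Having tamed $F$, I iteratively apply Lemma~\ref{lem:unbalanced-kst} to each pair of parts $(P_a, P_b)$ to extract vertex-disjoint $K_{t,t}$-copies according to the balancing formula, continuing until one side of each pair drops below $2t(e/(8\zeta))^t$ vertices. The $O(r^2)$ pairs contribute at most $64 r^2 \cdot (t/c) \cdot (e/(8\zeta))^t$ uncovered vertices in total, matching \ref{lem:balance:leftover}. The union of the selected $K_{t,t}$-copies is $L$, immediately yielding \ref{lem:balance:perfectpackL}; the balancing tuning yields \ref{lem:balance:dichotomy}; and \ref{lem:balance:highaveragedegree}, \ref{lem:balance:nosparsecut} hold because only $o(n)$ vertices have been removed or moved from any single $Z_i$, preserving average degree and the absence of sparse cuts up to parameter adjustment.

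The principal obstacle is the degree-reduction procedure: one needs to design the invariants and the update rule so that (i) moves strictly decrease a monovariant (making the procedure terminate), (ii) the total number of moves stays small (so that the modified expanders retain their expansion properties), and (iii) the modified $F$ still has enough edges of bounded degree to drive the subsequent K\H{o}v\'ari--S\'os--Tur\'an extractions and realise the balancing count. Once these invariants are set up, the remaining steps reduce to iterated applications of Lemma~\ref{lem:unbalanced-kst} and careful bookkeeping.
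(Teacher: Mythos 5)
Your overall architecture (expander decomposition, a regularity-counting identity per expander, a degree-reduction procedure on the inter-expander edges, then K\H{o}v\'ari--S\'os--Tur\'an extractions) matches the paper's, but the balancing accounting has a genuine gap. Your auxiliary graph $F$ contains only edges crossing distinct $Z_i$'s, so selecting $K_{t,t}$'s in numbers proportional to $e_F(P_a,P_b)$ can only realise a removal-difference between $X_i$ and $Y_i$ proportional to $e_F(X_i,\comp{Z_i}) - e_F(Y_i,\comp{Z_i})$, which by your own identity differs from $d\delta_i$ by $2(e_G(X_i)-e_G(Y_i))$ --- up to $2\beta n^2$ in absolute value. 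No single proportionality constant can absorb this for all $i$ simultaneously, and the resulting residual imbalance of order $\beta n^2/d = \Theta(\beta n)$ per expander is linear in $n$; it is not a ``residual modulo $t$'' and cannot be pushed into a leftover of constant size. The degenerate case makes this vivid: if $r=1$ and $G[Z_1]$ is $\beta$-almost-bipartite with $e_G(X_1)-e_G(Y_1)=\Theta(\beta n^2)$, then $F$ is empty, your method selects no copies, and $\delta_1=\Theta(\beta n)$ remains. The paper's proof is built around exactly this term: it keeps the residual internal edges (after cancelling $\min\{e_G(X_i),e_G(Y_i)\}$ from each side) in the tracked graph $H$, assigns collections $\cK(X_i,X_i)$ and $\cK(Y_i,Y_i)$ to them, and designs every copy so that it straddles \emph{both} parts of each expander it touches, shifting $|X_\ell|-|Y_\ell|$ by exactly $1$ (or $2$) per copy.

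A second, related problem is where your $K_{t,t}$'s are supposed to live. You cannot extract $\Theta(e_F(P_a,P_b)/d)$ vertex-disjoint copies of $K_{t,t}$ from the bipartite graph between parts of \emph{distinct} expanders: after degree reduction that graph has at most $\eta_0 n^2$ edges and bounded maximum degree, it may be $K_{t,t}$-free, and \Cref{lem:unbalanced-kst} requires linear minimum degree from one side, which fails across expanders. The paper's copies use only a single cross-expander star --- one heavy vertex $v\in W_i$ together with a constant number of its neighbours in $W_j$ --- and are completed with $t-1$ vertices from the \emph{opposite} part $U_j$ of the same expander $Z_j$, where the minimum degree of $G[X_j,Y_j]$ (property \ref{itm:H-4}, itself an output of the moving procedure) makes KST applicable; this asymmetric $1/t/(t-1)$ structure is also precisely what yields the shift-by-one balance control above. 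Finally, the degree-reduction procedure, which you correctly flag as the principal obstacle, is left unconstructed; note that the paper's version must delete exactly $d-d_H(v)$ compensating edges at each move to preserve the counting identity, and the number of moves it performs is $\Theta(\beta n)$ --- small enough to preserve expansion, but not $O(1)$ as you claim.
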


	\begin{proof}[ of Lemma~\ref{lem:balancing}]
		Let $c, t, n_0, n, d$ be as in the statement of the lemma, and let $G$ be a $d$-regular graph on $n$ vertices, where $d \ge cn$. By applying \Cref{lem:expander-decomposition} to $G$ (with $\min\{c, 1/t\}$ playing the role of $c$) we obtain positive numbers $r, \eta, \beta, \gamma, \zeta, \delta$, and a partition $\{Z_1, \ldots, Z_r\}$ of $V(G)$ satisfying $r \le \ceil{1/c}$ and \ref{itm:expander-decomp-1}--\ref{itm:expander-decomp-4}, such that 
		\begin{equation*}
			1/n_0 \ll \eta \ll \beta \ll \gamma \ll \zeta \ll \delta \ll c, 1/t.
		\end{equation*}

		Let $\sigma, \rho$ be positive numbers satisfying 
		\begin{equation*}
			\beta \ll \sigma \ll \rho \ll \gamma.
		\end{equation*}
		For every $i \in [r]$ such that $G[Z_i]$ is $\beta$-almost-bipartite, fix $\{X_i, Y_i\}$ to be a partition of $Z_i$ that maximises the number of edges between $X_i$ and $Y_i$.
		For every $i \in [r]$ such that $G[Z_i]$ is $\gamma$-far-from-bipartite, let $\{X_i,Y_i\}$ be a partition of $Z_i$ satisfying
		\begin{enumerate}[label = \rm(G\arabic*)]
			\item \label{item:xiyiequal}
				$|e_G(X_i) - e_G(Y_i)| \le \beta n^2$,
			\item
				\label{xiyimindegree}
				$\delta(G[X_i,Y_i]) \ge \delta n / 3$.
		\end{enumerate}
		Notice that this can easily be achieved by taking $\{X_i, Y_i\}$ to be a random partition of $Z_i$. Moreover, these conditions are satisfied also when $G[Z_i]$ is $\beta$-almost-bipartite. Indeed, if $G[Z_i]$ is $\beta$-almost-bipartite, then $e_G(X_i) + e_G(Y_i) \le \beta n^2$, implying that \ref{item:xiyiequal} holds. Moreover, since $\{X_i, Y_i\}$ is a partition maximising the number of edges in $G[X_i, Y_i]$ and since the minimum degree in $G[Z_i]$ is at least $\delta n$ (by \ref{itm:expander-decomp-2}), the graph $G[X_i,Y_i]$ has minimum degree at least $\delta n / 2$, showing \ref{xiyimindegree}.

		As mentioned in the proof sketch, we would like to use the regularity of $G$ to correct the imbalance between $X_i$ and $Y_i$, for every $i \in [r]$. This will be done via the following equation, which we claim holds for every $i \in [r]$, and which generalises \eqref{eqn:invariant} from the proof sketch from bipartite graphs to general graphs.
		\begin{equation}\label{eqn:main-property-H}
			\left(e_G(X_i, \comp{Z_i}) - e_G(Y_i, \comp{Z_i})\right) + 2\left(e_G(X_i) - e_G(Y_i)\right) - d\left(|X_i| - |Y_i|\right) = 0.
		\end{equation}
		Indeed, to see that \eqref{eqn:main-property-H} holds, we have the following equations counting the number of edges incident to $X_i$ and $Y_i$, respectively.
		\begin{align*}
			d|X_i| &= e_G(X_i, Y_i) + e_G(X_i, \comp{Z_i}) + 2e_G(X_i), \\
			d|Y_i| &= e_G(X_i, Y_i) + e_G(Y_i, \comp{Z_i}) + 2e_G(Y_i).
		\end{align*}
		Subtracting one from the other yields \eqref{eqn:main-property-H}.

		As explained in the proof sketch (see also \Cref{claim:find-Ktts} below), \eqref{eqn:main-property-H} shows that to find a balancing $K_{t,t}$-packing, it suffices to find disjoint $K_{t,t}$-packings for each pair $(W_i, W_j)$ with $W_i \in \{X_i, Y_i\}$ and $W_j \in \{X_j, Y_j\}$ of size roughly $\frac{1}{d} e_G(W_i, W_j)$, whose $K_{t,t}$'s effectively move one vertex from $W_i$ to $Z_j \setminus W_j$. This can be done greedily if the maximum degree of edges between expanders is small (at most $\rho n$ for a small constant $\rho$). However, the maximum degree of edges between expanders need not be small, even if we are allowed to modify the $Z_i$'s somewhat. Nevertheless, we emulate the small maximum degree situation by constructing a subgraph $H$ of $G$, and modifying the sets $Z_i$, so that \eqref{eqn:main-property-H} is maintained and the maximum degree of the final $H$ is small (with possibly some, very few, exceptional vertices with large degree).
		To do so we first define $H$ as follows, and then modify it and the $Z_i$'s as described in the next subsection.

		Let $H$ be the graph obtained from $G$ by removing all edges between $X_i$ and $Y_i$, and also arbitrarily removing $\min\{e_G(X_i), e_G(Y_i)\}$ edges from each of $G[X_i]$ and $G[Y_i]$ for all $i \in [r]$. Then $H$ satisfies the following properties.
		\begin{enumerate}[label = \rm(H\arabic*)]
			\item \label{itm:H-1}
				$\left(e_H(X_i, \comp{Z_i}) - e_H(Y_i, \comp{Z_i})\right) + 2\left(e_H(X_i) - e_H(Y_i)\right) - d\left(|X_i| - |Y_i|\right) = 0$,
			\item \label{itm:H-1b}
				$e_H(X_i, Y_i) = 0$ for every $i \in [r]$,
			\item \label{itm:H-2}
				$H$ has maximum degree at most $d - \delta n / 3$,
			\item \label{itm:H-3}
				$e(H) \le \eta n^2 + r\beta n^2 \le 2r\beta n^2$.
		\end{enumerate}

		Note that for \ref{itm:H-2} we used \ref{xiyimindegree}, and for \ref{itm:H-3} we used \ref{itm:expander-decomp-1} and \ref{item:xiyiequal}. 

		In the next subsection, we use a procedure for moving vertices between the expanders $G[Z_i]$, $i \in [r]$, in order to ensure that the degrees of vertices in $H$ between the resulting subgraphs of $G$ are appropriately bounded (whenever there are sufficiently many such edges), while also ensuring that the subgraphs have large minimum degree. More precisely, we will show that, when the procedure ends, \ref{itm:H-1}--\ref{itm:H-3} still hold, and \ref{itm:H-4} and \ref{itm:H-5} below also hold.

	\subsection{Procedure for moving vertices between the expanders for better degree control}
		\label{subsec:degreecontrol}

		Start with $H, (X_i)_{i \in [r]}, (Y_i)_{i \in [r]}, (Z_i)_{i \in [r]}$ satisfying \ref{itm:H-1}--\ref{itm:H-3}, and modify them using the following procedure, by running it as long as possible. For convenience, in the rest of the proof we denote the values of these quantities before the start of the procedure by $H^{\textrm{start}}$, $(X^{\textrm{start}}_i)_{i \in [r]}$, $(Y^{\textrm{start}}_i)_{i \in [r]}$, $(Z^{\textrm{start}}_i)_{i \in [r]}$, respectively.

		\begin{algorithm}[!ht]
			\begin{algorithmic}[1]
				\label{alg:balancing}
				\Procedure{Move-high-degree-vertices}{}
				\State Start with $H, (X_i)_{i \in [r]}, (Y_i)_{i \in [r]}, (Z_i)_{i \in [r]}$ satisfying \ref{itm:H-1}--\ref{itm:H-3}.
				\While{ there exist $i,j \in [r],  W_i \in \{X_i, Y_i\},  W_j \in \{X_j, Y_j\}$ \text{and} $v \in W_i$ such that 	
				\begin{equation} \label{eqn:WiWj}
					d_H(v, W_j) \ge \rho n \quad\text{ and }\quad e_H(W_i, W_j) \ge d.
				\end{equation}}
				\State $W_i \gets W_i \setminus \{ v\}$,
				\State $U_j \gets U_j \cup \{v\}$, where  $\{U_j, W_j\} = \{X_j, Y_j\}$,
				\State $Z_i \gets X_i \cup Y_i$ and $Z_j \gets X_j \cup Y_j$,
				\State {$H \gets H \setminus (E_{i,j} \cup E_H(v,W_j))$, where $E_{i,j} \subseteq H[W_i \setminus \{v\}, W_j \setminus \{v\}]$ is a set of exactly $d - d_H(v)$ edges, and $E_H(v,W_j)$ is the set of all edges in $H$ between $v$ and $W_j$.}
				\EndWhile
				\EndProcedure
			\end{algorithmic}
		\end{algorithm}

		In other words, whenever there are $i,j \in [r]$, $W_i \in \{X_i, Y_i\}$, $W_j \in \{X_j, Y_j\}$ and $v \in W_i$ such that $d_H(v, W_j) \ge \rho n$ and $e_H(W_i, W_j) \ge d$ (see \eqref{eqn:WiWj}), we move $v$ from $W_i$ to $U_j$ (where $\{U_j, W_j\} = \{X_j, Y_j\}$), we remove $d - d_H(v)$ edges between $W_i \setminus \{v\}$ and $W_j \setminus \{v\}$ from $H$, and we also remove all edges between $v$ and $W_j$ from $H$. Note that it is possible to remove $d - d_H(v)$ edges between $W_i \setminus \{v\}$ and $W_j \setminus \{v\}$ by the assumption that $e_H(W_i,W_j) \ge d \ge d - d_H(v)+d_H(v, W_j)$.

		\begin{claim}
			\label{claim:H123hold}
			Throughout the procedure \textsc{\emph{Move-high-degree-vertices}}, \ref{itm:H-1} to \ref{itm:H-3} hold.
		\end{claim}

		\begin{proof}
			Since we never add any edges to $H$ during the procedure and \ref{itm:H-2} and \ref{itm:H-3} hold at the start, it is obvious that they hold throughout the procedure. It is also easy to check that \ref{itm:H-1b} is true, because it is true at the start, and after moving a vertex $v$ to $U_j$, we remove all edges between $v$ and $W_j$. Hence, it suffices to show that if \ref{itm:H-1} holds for a given $H$, $(X_i)_{i\in[r]}$, $(Y_i)_{i\in[r]}$, then \ref{itm:H-1} holds after performing one step of the procedure as well. Let $H'$, $(X'_i)_{i\in[r]}$, $(Y'_i)_{i\in[r]}$ denote the corresponding quantities after performing one step of the procedure. We need to show that for all $i \in [r]$, 
			\begin{equation} \label{eqn:claim11}
				(e_{H'}(X'_i, \comp{Z'_i}) - e_{H'}(Y'_i, \comp{Z'_i})) + 2\left(e_{H'}(X'_i) - e_{H'}(Y'_i)\right) - d\left(|X'_i| - |Y'_i|\right) = 0.
			\end{equation}
			Without loss of generality, we first prove it when $(W_i,W_j) = (X_i,Y_j)$ with $i \neq j$ and then we prove it when  $(W_i,W_j) = (X_i, X_i)$. Indeed, by symmetry, this covers all cases since $e_H(X_i,Y_i) = 0$, and $X_i$ and $Y_i$ play the same role for every $i \in [r]$.

			First, consider the case when $(W_i,W_j) = (X_i,Y_j)$ with $i \ne j$. In this case, note that the following twelve equations hold.

			\begin{align*}
				e_{H'}(X_i',\comp{Z_i'}) - e_H(X_i,\comp{Z_i}) & = -(d - d_H(v)) - d_H(v, \comp{Z_i}) + d_H(v, X_i)\\
				e_{H'}(Y_i',\comp{Z_i'}) - e_H(Y_i,\comp{Z_i}) & = 0 \\
				e_{H'}(X_i') - e_H(X_i) & = -d_H(v, X_i) \\
				e_{H'}(Y_i') - e_H(Y_i) & = 0 \\
				|X_i'| - |X_i| & = -1\\
				|Y_i'| - |Y_i| & = 0.\\ \\
				e_{H'}(X_j',\comp{Z_j'}) - e_H(X_j,\comp{Z_j}) & = d_H(v, \comp{Z_j}) - d_H(v, X_j)\\
				e_{H'}(Y_j',\comp{Z_j'}) - e_H(Y_j,\comp{Z_j}) & = -(d - d_H(v)) - d_H(v, Y_j)\\
				e_{H'}(X_j') - e_H(X_j) & = d_H(v, X_j)\\
				e_{H'}(Y_j') - e_H(Y_j) & = 0\\
				|X_j'| - |X_j| & = 1\\
				|Y_j'| - |Y_j| & = 0.
			\end{align*}
			Multiplying the first six equations by $1, -1, 2, -2, -d, d$, respectively, and adding them all up, we find that 
			\begin{align*}
				& (e_{H'}(X_i',\comp{Z_i'}) - e_{H'}(Y_i',\comp{Z_i'})) + 2\left(e_{H'}(X_i') - e_{H'}(Y_i')\right) - d\left(|X_i'| - |Y_i'|\right) \\
				= & \left(e_H(X_i,\comp{Z_i}\right) - e_H(Y_i,\comp{Z_i})) + 2\left(e_H(X_i) - e_{H}(Y_i)\right) - d\left(|X_i| - |Y_i|\right) = 0.
			\end{align*}
			Similarly, multiplying the last six equations by $1, -1, 2, -2, -d, d$, respectively, and adding them up, gives
			\begin{align*}
				& (e_{H'}(X_j',\comp{Z_j'}) - e_{H'}(Y_j',\comp{Z_j'})) + 2\left(e_{H'}(X_j') - e_{H'}(Y_j')\right) - d\left(|X_j'| - |Y_j'|\right) \\
				= & \left(e_H(X_j,\comp{Z_j}\right) - e_H(Y_j,\comp{Z_j})) + 2\left(e_H(X_j) - e_{H}(Y_j)\right) - d\left(|X_j| - |Y_j|\right) = 0.
			\end{align*}
			It is easy to see that for $\ell \in [r] \setminus \{i,j\}$, we have $e_{H'}(X_{\ell}',\comp{Z_{\ell}'}) = e_H(X_{\ell},\comp{Z_{\ell}})$, $e_{H'}(Y_{\ell}',\comp{Z_{\ell}'}) = e_H(Y_{\ell},\comp{Z_{\ell}})$, $e_{H'}(X_{\ell}') = e_H(X_{\ell})$, $e_{H'}(Y_{\ell}') = e_{H}(Y_{\ell})$, $|X_{\ell}| = |X_{\ell}'|$, and $|Y_{\ell}| = |Y_{\ell}'|$, proving \eqref{eqn:claim11} in this case.

			Now consider the case when $(W_i,W_j) = (X_i,X_i)$. In this case, note that the following six equations hold.
			\begin{align*}
				e_{H'}(X_i',\comp{Z_i'}) - e_H(X_i,\comp{Z_i}) & = -d_H(v, \comp{Z_i}) \\
				e_{H'}(Y_i',\comp{Z_i'}) - e_H(Y_i,\comp{Z_i}) & = d_H(v, \comp{Z_i}) \\
				e_{H'}(X_i') - e_H(X_i) & = -d_H(v, X_i) -(d - d_H(v))\\
				e_{H'}(Y_i') - e_H(Y_i) & = 0 \\
				|X_i'| - |X_i| & = -1\\
				|Y_i'| - |Y_i| & = 1.
			\end{align*}
			As before, multiplying these equations by $1, -1, 2, -2, -d, d$, respectively, and adding up the resulting equations gives
			\begin{align*}
				& (e_{H'}(X_i',\comp{Z_i'}) - e_{H'}(Y_i',\comp{Z_i'})) + 2\left(e_{H'}(X_i') - e_{H'}(Y_i')\right) - d\left(|X_i'| - |Y_i'|\right) \\
				= & \left(e_H(X_i,\comp{Z_i}\right) - e_H(Y_i,\comp{Z_i})) + 2\left(e_H(X_i) - e_{H}(Y_i)\right) - d\left(|X_i| - |Y_i|\right) = 0.
			\end{align*}
			Again, it is easy to see that for $\ell \in [r] \setminus \{i\}$, we trivially have $e_{H'}(X_{\ell}',\comp{Z_{\ell}'}) = e_H(X_{\ell},\comp{Z_{\ell}})$, $e_{H'}(Y_{\ell}',\comp{Z_{\ell}'}) = e_H(Y_{\ell},\comp{Z_{\ell}})$, $e_{H'}(X_{\ell}') = e_H(X_{\ell})$, $e_{H'}(Y_{\ell}') = e_{H}(Y_{\ell})$, $|X_{\ell}| = |X_{\ell}'|$, and $|Y_{\ell}| = |Y_{\ell}'|$, proving \eqref{eqn:claim11} in this case and completing the proof of the claim.
		\end{proof}

		Let us bound the number of steps taken by the procedure \textsc{Move-high-degree-vertices} by the time it ends, that is, by the time \eqref{eqn:WiWj} is no longer satisfied by any choice of parameters. 
		Notice that after each step of the procedure, the number of edges in $H$ decreases by at least $\rho n$. 
		Since $H$ satisfies \ref{itm:H-3} at the beginning of the procedure, this implies that the total number of steps taken before the procedure ends is at most $2r\beta n^2 / (\rho n) = (2r\beta/\rho) n \le \rho n / 2$ (using $r \le \lceil 2/c \rceil$ and $\beta \ll \rho, \delta, c$). Since at most one vertex is moved at each step of the procedure we obtain the following.

		\begin{enumerate}[label = \rm(M)] 
			\item \label{movedverticesitem}
				At most $(2r\beta/\rho)n \le \rho n / 2$ vertices are moved during the procedure \textsc{Move-high-degree-vertices}.
		\end{enumerate}

		In the rest of the proof, let $(X_i)_{i \in [r]}$, $(Y_i)_{i \in [r]}$, $(Z_i)_{i \in [r]}$ and $H$ denote the parts and the graph obtained at the end of the procedure \textsc{Move-high-degree-vertices}. Then we claim that the following properties hold.

		\begin{enumerate}[label = \rm(H\arabic*)]
			\setcounter{enumi}{4}
			\item \label{itm:H-4}
				$\delta(G[X_i,Y_i]) \ge \rho n / 2$ for $i \in [r]$.
			\item \label{itm:H-5}
				For every $i,j \in [r]$, $W_i \in \{X_i,Y_i\}$, $W_j \in \{X_j, Y_j\}$ either $e_H(W_i, W_j) < d$ or $\Delta(H[W_i,W_j]) \le \rho n$.
		\end{enumerate}

		Indeed, note that for every vertex $v$ in $X_i \cup Y_i$, there is a step during the procedure  \textsc{Move-high-degree-vertices} where its degree is at least $\rho n$ in $G[X^*_i, Y^*_i]$, where $X^*_i$ and $Y^*_i$ denote the parts corresponding to $X_i$ and $Y_i$, respectively, in that step. (Here we used that if $v$ is moved into $X^*_i$ or $Y^*_i$ at the previous step, then it has degree at least $\rho n$ in $G[X^*_i, Y^*_i]$, and if $v$ has never been moved during the procedure, then we use that~\ref{xiyimindegree} holds at the beginning of the procedure, and that $\rho \ll \delta$.) Moreover, by \ref{movedverticesitem}, at most $\rho n / 2$ vertices are moved during the procedure. Hence, the degree of $v$ changes by at most $\rho n / 2$ when the procedure ends, proving that~\ref{itm:H-4} holds (note that \ref{itm:H-4} pertains to the degree in $G$, not in $H$). Moreover, it is easy to see that \ref{itm:H-5} holds, because otherwise the procedure would not have stopped.

		In the next subsection, we carefully construct a small collection of vertex-disjoint copies of $K_{t,t}$ whose removal makes the subgraphs $G[X_i, Y_i]$, for $i \in [r]$, more balanced. 

	\subsection{Constructing a collection $\cK$ of $K_{t,t}$'s in $G$ for balancing the subgraphs $G[X_i, Y_i]$}

		In the rest of this proof, let $$T \coloneqq \frac{4t}{c} \left(\frac{4e}{\rho}\right)^t.$$ By repeatedly applying an unbalanced variant of the K\H{o}v\'ari--S\'os--Tur\'an theorem (Theorem~\ref{lem:unbalanced-kst}), we prove the following claim. Recall that the vertex set of a given copy $K$ of $K_{t,t}$ in $G$ is denoted by $V(K)$, and the set of vertices contained in a given collection $\cK$ of copies of $K_{t,t}$ in $G$ is denoted by $V(\cK)$.

		\begin{claim} \label{claim:find-Ktts}
			There exists a collection $\cK \coloneqq \cup_{1 \le i \le j \le r} \big(\cK(X_i, X_j) \cup \cK(X_i, Y_j) \cup \cK(Y_i, X_j) \cup \cK(Y_i, Y_j)\big)$ of pairwise vertex-disjoint copies of $K_{t,t}$ in $G$, such that for every $i,j \in [r]$ with $i \le j$ and every $W_i \in \{X_i, Y_i\}$, $W_j \in \{X_j, Y_j\}$, the following two properties hold.
			\begin{itemize}
				\item
					If $e_H(W_i, W_j) \ge Td$ then $\big|\cK(W_i, W_j)\big| = \floor{\frac{e_H(W_i,W_j)}{d}}$ and, otherwise, $\cK(W_i, W_j) = \emptyset$. 
					In particular, $|\cK| \le \sum_{i \le j} \frac{e_H(W_i,W_j)}{d} \le \frac{e(H)}{d}$.
				\item
					Every copy $K \in \cK(W_i, W_j)$ satisfies $V(K) \subseteq Z_i \cup Z_j$, and for every $\ell \in \{i,j\}$, we have
					\begin{equation} \label{eqn:K}
						|W_{\ell} \cap V(K)| - |U_{\ell} \cap V(K)| = \left\{
							\begin{array}{ll}
								1 & \text{if $i \neq j$} \\
								2 & \text{if $i = j$,}
							\end{array}
							\right.
						\end{equation}
						where $U_{\ell} = Z_{\ell} \setminus W_{\ell}$. 
			\end{itemize}
			\end{claim}

			\begin{proof}
				Let $\sigma$ be an ordering of the pairs $(W_i, W_j)$, with $1 \le i \le j \le r$ and $e_H(W_i,W_j) \ge Td$, in increasing order of $e_H(W_i,W_j)$. We will show by induction that for every such pair $(W_i,W_j)$ there is a collection $\cK$ of pairwise vertex-disjoint copies of $K_{t,t}$ in $G$ satisfying the following: for every $(W_{i'}, W_{j'})$ such that $(W_{i'}, W_{j'}) \le_\sigma (W_i,W_j)$, $\cK$ consists of exactly $\floor{\frac{e_H(W_{i'},W_{j'})}{d}}$ copies $K$ of $K_{t,t}$ satisfying $V(K) \subseteq Z_{i'} \cup Z_{j'}$ and \eqref{eqn:K}.

				To this end, fix $(W_i,W_j)$ such that $1 \le i \le j \le r$ and $e_H(W_i,W_j) \ge Td$, and suppose that there is a collection $\cK$ of pairwise vertex-disjoint copies of $K_{t,t}$ in $G$ satisfying the following: for every $(W_{i'},W_{j'})$ such that $(W_{i'},W_{j'}) <_\sigma (W_i,W_j)$, $\cK$
				consists of exactly $\floor{\frac{e_H(W_{i'},W_{j'})}{d}}$ copies $K$ of $K_{t,t}$ satisfying $V(K) \subseteq Z_{i'} \cup Z_{j'}$ and \eqref{eqn:K} (and there are no other $K_{t,t}$-copies). Let $f \coloneqq e_H(W_i,W_j)$, and let $\cK'$ be a maximal collection of at most $\floor{f/d}$ copies $K$ of $K_{t,t}$ that are pairwise vertex-disjoint and vertex-disjoint from the copies of $K_{t,t}$ in $\cK$ and satisfy $V(K) \subseteq Z_{i} \cup Z_{j}$ and \eqref{eqn:K}. 

				If $|\cK'| = \floor{f/d}$, then we are done with the proof of the induction step, so suppose $|\cK'| < \floor{f/d}$. Let $V$ be the set of all vertices contained in a copy of $K_{t,t}$ in $\cK \cup \cK'$.  Then $|V| \le (2r)^2 \cdot (f / d) \cdot 2t = 8 r^2 f t/d$ because each pair $(W_{i'}, W_{j'})$ that precedes $(W_i,W_j)$ in $\sigma$ satisfies $e_H(W_{i'},W_{j'}) \le f$ by the choice of $\sigma$, and there are at most $(2r)^2$ such pairs $(W_{i'}, W_{j'})$. Thus, the number of edges in $H[W_i,W_j]$ incident to a vertex from $V$ is at most $\rho n \cdot |V| \le \rho n \cdot 8 r^2 f t/d \le f/2$ (using that $\Delta(H[W_i,W_j]) \le \rho n$ by~\ref{itm:H-5}, that $r \le \ceil{1/c}$ and that $\rho \ll c, 1/t$).
				This implies that there is a vertex $v$ in $W_i$ with at least $\frac{f}{2n} \ge \frac{Td}{2n} \ge 2t\left(\frac{4e}{\rho}\right)^t$ neighbours in $W_j \setminus V$; denote the set of these neighbours by $A$.
				Observe that $G[W_j \setminus V, U_j \setminus V]$ has minimum degree at least $\rho n / 4$ because \ref{itm:H-4} holds (by \Cref{claim:H123hold}) and $|V| \le 2t \cdot e(H)/d \le 2t \cdot 2r\beta n^2/d \le \rho n / 4$ because \ref{itm:H-3} holds, $d \ge cn$ and $\beta \ll \rho, c, 1/t$.
				Thus, by the K\H{o}v\'ari--S\'os--Tur\'an theorem (\Cref{lem:unbalanced-kst}), there is a copy of $K_{t,t-1}$ in $G[A, U_j \setminus V]$, with $t$ vertices in $A$ and $t-1$ vertices in $U_j \setminus V$. Together with $v$, this copy of $K_{t,t-1}$ forms a copy $K$ of $K_{t,t}$ in $G$ satisfying $V(K) \subseteq Z_i \cup Z_j$ and \eqref{eqn:K}, contradicting the maximality of the collection $\cK'$, and proving the claim. 
			\end{proof}

			Let $\cK$ be the collection of copies of $K_{t,t}$ guaranteed by \Cref{claim:find-Ktts}.
			We claim that, for each $i \in [r]$, we have
			\begin{equation} \label{eqn:K-balancing}
				|X_i \cap V(\cK)| - |Y_i \cap V(\cK)| = |X_i| - |Y_i| \pm 8rT.
			\end{equation}

			Indeed, let us fix $i \in [r]$, and note that \Cref{claim:find-Ktts} ensures that for every $j \in [r]$, $W_i \in \{X_i, Y_i\}$, $W_j \in \{X_j, Y_j\}$ such that $e_H(W_i, W_j) \ge Td$ and $W_j \neq Z_i \setminus W_i$, there are exactly $\floor{\frac{e_H(W_i,W_j)}{d}}$ copies $K \in \mathcal K$ satisfying $V(K) \subseteq Z_i \cup Z_j$ and~\eqref{eqn:K}. Therefore, it is easy to see that the following holds (where $\one\{E\}$ denotes the indicator function of $E$ i.e., $\one\{E\} = 1$ if $E$ holds, and $\one\{E\} = 0$, otherwise).
            
			\begin{align*}
				& |X_i \cap V(\cK)| - |Y_i \cap V(\cK)| \\[.2em]
				& \quad = \,\sum_{W_j \in \{X_j, Y_j\}, \,\, j \in [r] \setminus \{i\}} \left(\floor{\frac{e_H(X_i, W_j)}{d}} \one\{e_H(X_i, W_j) \ge Td\} - \floor{\frac{e_H(Y_i, W_j)}{d}} \one\{e_H(Y_i, W_j) \ge Td\} \right) \\[.2em]
				& \qquad \quad + \left(2\floor{\frac{e_H(X_i)}{d}} \one\{e_H(X_i) \ge Td\} - 2\floor{\frac{e_H(Y_i)}{d}} \one\{e_H(Y_i) \ge Td\}\right) \\[.2em]
				&\quad = \, \sum_{W_j \in \{X_j, Y_j\}, \,\, j \in [r] \setminus \{i\}} \left(\frac{e_H(X_i, W_j)}{d} - \frac{e_H(Y_i, W_j)}{d} \right) + \left(\frac{2e_H(X_i)}{d} - \frac{2e_H(Y_i)}{d} \right) \pm 8rT \\[.2em]
				& \quad = \, \frac{1}{d} \cdot \left(e_H(X_i, \comp{Z_i}) - e_H(Y_i, \comp{Z_i}) + 2e_H(X_i) - 2e_H(Y_i) \right) \pm 8rT
				= |X_i| - |Y_i| \pm 8rT,
			\end{align*}
			where for the last equality we used that \ref{itm:H-1} holds (by \Cref{claim:H123hold}). Hence \eqref{eqn:K-balancing} holds. 

			Note that by \eqref{eqn:K-balancing}, for every $i \in [r]$, we have 
			\begin{equation*}
				|X_i \setminus V(\cK)| = |Y_i \setminus V(\cK)| \pm 8rT.
			\end{equation*}

			For every $i \in [r]$, let $X_i' \subseteq X_i \setminus V(\cK)$ and $Y_i' \subseteq Y_i \setminus V(\cK)$ be subsets of equal size, obtained by removing at most $8rT$ vertices from $X_i \setminus V(\cK)$ and from $Y_i \setminus V(\cK)$. 
			Write $Z_i' = X_i' \cup Y_i'$. Then, since $\{Z_1, \ldots, Z_r\}$ partition $V(G)$, we have 
			\begin{equation} \label{eqn:leftovercalculation}
				|V(G)  \setminus (Z_1' \cup \ldots \cup Z_{r}')| \le |V(\mathcal K)| + 8rT \cdot 2r = |V(\mathcal K)| + 64r^2 \cdot \frac{t}{c} \cdot \left(\frac{4e}{\rho}\right)^t.
			\end{equation}

			Let $G'$ be the spanning subgraph of $G$ obtained by removing the edges of $G$ within $X_i'$ and $Y_i'$ for all $i \in [r]$ such that  $G[Z^{\textrm{start}}_i]$ is $\beta$-almost-bipartite. This ensures that the subgraph $G'[Z_i']$ is bipartite and balanced for all $i \in [r]$ for which $G[Z^{\textrm{start}}_i]$ is $\beta$-almost-bipartite.  (Recall that here the sets $Z^{\textrm{start}}_i$ for $i \in [r]$, are defined before running the procedure \textsc{Move-high-degree-vertices}.)

			Let us make a simple observation concerning the number of vertices added to or removed from the set $Z_i^{\rm{start}}$ in order to form the set $Z_i'$ for every $i \in [r]$. Note that, for every $i \in [r]$, by \ref{movedverticesitem} and the definition of $Z_i'$, we have $|Z_i^{\rm{start}}\setminus Z_i'| \le (2r\beta/\rho)n + |V(\cK)| + 16rT$ and $|Z_i'\setminus Z_i^{\rm{start}}| \le (2r\beta/\rho)n$. Moreover, since $r \le \ceil{1/c}$ and $\beta \ll \rho, \sigma, c, \delta$, we have $(2r\beta/\rho)n \le \sigma n/4$, and by Claim~\ref{claim:find-Ktts}, \ref{itm:H-3} and the fact that $\beta \ll \rho, \sigma, c, 1/t$, we have $|V(\cK)| \le 2t \cdot \frac{e(H)}{d} \le 2t \cdot \frac{2 r \beta n^2}{d} \le \sigma n/4$. Therefore, 
			\begin{equation}
				\label{eqn:changeinvertexsets}
				|Z_i^{\rm{start}}\setminus Z_i'|,
				|Z_i'\setminus Z_i^{\rm{start}}| \le \sigma n.
			\end{equation}

	\subsection{Showing that expansion is preserved after balancing}

		In this subsection, we will prove the following claim, which shows that the subgraphs $G'[Z_i']$ for $i \in [r]$ (which are obtained after balancing) still have good expansion properties. 

		\begin{claim} \label{claim:Zi-expander}
			For every $i \in [r]$, the graph $G'[Z_i']$ has no $\rho/8$-sparse cuts.
		\end{claim}

		\begin{proof}
			Fix $i \in [r]$, and, for convenience, write $Z' \coloneqq Z_i^{\textrm{start}}$ and $Z \coloneqq Z_i'$. 
			Consider a partition $\{A, B\}$ of $Z$, where $A$ and $B$ are non-empty and $|A| \ge |B|$. 
			We will show that $e_{G'}(A, B) \ge (\rho/8) |A| |B|$, thereby proving the claim.

			Suppose first that $|B| \le \rho n / 8$.
			Recall that $G[X_i, Y_i]$ has minimum degree at least $\rho n / 2$ (by \ref{itm:H-4}). Since $|X_i \setminus X_i'|, |Y_i \setminus Y_i'| \le \sigma n \le \rho n / 4$, by \eqref{eqn:changeinvertexsets} and $\sigma \ll \rho$, the graph $G[X_i', Y_i']$ has minimum degree at least $\rho n / 4$. As this is a spanning subgraph of $G'[Z_i'] = G'[Z]$, the graph $G'[Z]$ also has minimum degree at least $\rho n / 4$.
			Hence
			\begin{equation*}
				e_{G'}(A, B) 
				\ge |B| \cdot \big(\rho n / 4 - |B|\big) 
				\ge |B| \cdot \rho n / 8
				\ge (\rho / 8) \cdot |A| |B|,
			\end{equation*}

			Suppose, instead, that $|B| > \rho n / 8$.
			Write $U \coloneqq Z \setminus Z'$, $W \coloneqq Z' \setminus Z$, $A' \coloneqq (A \setminus U) \cup W$ and $B' \coloneqq B \setminus U$. Then $\{A', B'\}$ is a partition of $Z'$. Since $G[Z']$ has no $\zeta$-sparse cuts, we have 
			\begin{align} \label{eqn:cut}
				\begin{split} 
					e_G(A', B') 
					\ge \zeta |A'| |B'|
					\ge \zeta \cdot |A \setminus U| \cdot |B \setminus U|
					\ge \zeta \cdot (|A| - \sigma n) \cdot (|B| - \sigma n) 
					\ge (\zeta / 4) \cdot |A| |B|.
				\end{split}
			\end{align}
			For the last inequality, we used $|A| \ge |B| \ge \rho n / 8 \ge 2\sigma n$ (using $\sigma \ll \rho$).
			We now upper bound the number of edges in $G[Z']$ that are not in $G'[Z']$.
			Write $X' \coloneqq X_i^{\mathrm{start}}$, $Y' \coloneqq Y_i^{\mathrm{start}}$, $X \coloneqq X_i$ and $Y \coloneqq Y_i$. 
			Recall that if $G'[Z]$ and $G[Z]$ differ, then $e_G(X') + e_G(Y') \le \beta n^2$ and $G'[Z]$ is obtained by removing some of the edges in $G[X]$ and $G[Y]$ from $G[Z]$.
			Assuming $G[Z]$ and $G'[Z]$ differ, we thus have
			\begin{align}
				\begin{split} \label{eqn:edges}
					e_G(Z) - e_{G'}(Z)
					= e_G(X) + e_G(Y)
					& \le e_G(X') + |X \setminus X'| \cdot n + e_G(Y') + |Y \setminus Y'| \cdot n \\
					& \le \beta n^2 + 2\sigma n^2 
					\le 3\sigma n^2,
				\end{split}
			\end{align}
			using \eqref{eqn:changeinvertexsets} and $\beta \ll \sigma$.
			Hence
			\begin{align*}
				e_{G'}(A, B) 
				= e_{G'}(A \cup W, B) - e_{G'}(W, B)
				& \ge e_{G'}(A', B') - \sigma n^2 \\
				& \ge e_G(A', B') - 4\sigma n^2 \\
				& \ge (\zeta / 4) \cdot |A| |B| - 4\sigma n^2
				\ge (\zeta / 8) \cdot |A| |B| 
				\ge (\rho / 8) \cdot |A| |B|.
			\end{align*}
			Here we used $|W| \le \sigma n$ for the first inequality, \eqref{eqn:edges} for the second one, \eqref{eqn:cut} for the third, $(\zeta / 4) \cdot |A| |B| \ge (\zeta \rho^2 / 256) \cdot n^2 \ge 8\sigma n^2$ for the fourth, and that $\rho \ll \zeta$ for the last inequality.
		\end{proof}

		\subsection{Putting everything together and completing the proof of Lemma~\ref{lem:balancing}}

			To complete the proof of Lemma~\ref{lem:balancing}, we will show that the following properties hold for all $i \in [r]$.
			\begin{enumerate}[label = \rm(\alph*)]
				\item \label{itm:Zprime1}
					$G'[Z_i']$ has no $\rho/8$-sparse cuts.
				\item \label{itm:Zprime2}
					Either $G'[Z_i']$ is bipartite and balanced, or one needs to remove at least $\gamma n^2/4$ edges from $G'[Z_i']$ to make it bipartite.
				\item \label{itm:Zprime3}
					The average degree of $G'[Z_i']$ is at least $d -  \frac{144 \sigma}{\rho^2} |Z_i'|$.
			\end{enumerate}
			Indeed, \Cref{claim:Zi-expander} shows \ref{itm:Zprime1}. 

			For proving \ref{itm:Zprime2}, notice that if $G[Z_i^{\textrm{start}}]$ is $\beta$-almost-bipartite, then $G'[Z_i']$ is indeed bipartite and balanced, by the choice of $X_i'$ and $Y_i'$.
			So consider $i \in [r]$ such that $G[Z_i^{\textrm{start}}]$ is $\gamma$-far-from-bipartite.
			Recall that $|Z_i^{\textrm{start}} \setminus Z_i'| \le \sigma n \le \gamma n / 4$ by \eqref{eqn:changeinvertexsets} and the fact that $\sigma \ll \gamma$. Suppose for a contradiction that one can remove fewer than $\gamma n^2/4$ edges from $G'[Z_i'] = G[Z_i']$ to make it bipartite. But this means one can remove fewer than $\gamma n^2/4 + |Z_i^{\textrm{start}} \setminus Z_i'| \cdot n \le \gamma n^2/2$ edges from $G[Z_i^{\textrm{start}}]$ to make it bipartite. (Indeed, we can remove all edges incident to $Z_i^{\textrm{start}} \setminus Z_i'$, and fewer than $\gamma n^2 / 4$ edges from $G[Z_i^{\textrm{start}} \cap Z_i']$ to make it bipartite.) This contradicts the fact that $G[Z_i^{\textrm{start}}]$ is $\gamma$-far-from-bipartite, and proves \ref{itm:Zprime2}.

			Finally, for proving \ref{itm:Zprime3}, first recall that $e_G(Z_i') - e_{G'}(Z_i') \le 3\sigma n^2$, by \eqref{eqn:edges}.
			Second, by \eqref{eqn:changeinvertexsets}, and since the sets $Z_i^{\textrm{start}}$ satisfy \ref{itm:expander-decomp-1}, we have
			\begin{align*}
				e_G(Z_i', \comp{Z_i'})
				\le e_G(Z_i^{\textrm{start}}, \comp{Z_i^{\textrm{start}}}) + |Z_i^{\textrm{start}} \setminus Z_i' |n + |Z_i' \setminus Z_i^{\textrm{start}}|n
				\le
				\eta n^2 + 2\sigma n^2
				\le 3\sigma n^2.
			\end{align*}
			Combining the above two inequalities we have 
			\begin{align}
				\label{avdegG'Zi'}
				d|Z_i'| - 2e_{G'}(Z_i')
				\le e_{G}(Z_i', \comp{Z_i'}) + 2(e_G(Z_i') - e_{G'}(Z_i'))
				\le 9\sigma n^2 
				\le \frac{144 \sigma}{\rho^2} |Z_i'|^2,
			\end{align}
			where in the last inequality we used $|Z_i'| \ge \rho n/4$, which follows from $G[Z_i']$ having minimum degree at least $\rho n / 4$, as pointed out at the beginning of the proof of \Cref{claim:Zi-expander}.
			Hence, by \eqref{avdegG'Zi'}, the average degree of $G'[Z_i']$ is at least $d - \frac{144 \sigma}{\rho^2} |Z_i'|$, proving \ref{itm:Zprime3}.

			Using \ref{itm:Zprime1}, \ref{itm:Zprime2} and \ref{itm:Zprime3}, it is now easy to check that \ref{lem:balance:highaveragedegree}, \ref{lem:balance:nosparsecut} and \ref{lem:balance:dichotomy} of the lemma hold with $Z_i'$ playing the role of $Z_i$ for $i \in [r]$, $V(\cK)$ playing the role of $L$ and with $\frac{144 \sigma}{\rho^2}, \rho/8$ and $\gamma/4$ playing the roles of $\eta, \zeta$ and $\gamma$, respectively (in particular, notice that these new parameters satisfy $\eta \ll \zeta \ll \gamma \ll c$, as $\sigma \ll \rho \ll \gamma \ll c$). Since, by definition, $\mathcal K$ is a collection of pairwise vertex-disjoint copies of $K_{t,t}$ in $G$, \ref{lem:balance:perfectpackL} of the lemma immediately follows. Finally, by \eqref{eqn:leftovercalculation} and using that $\rho/8$ plays the role of $\zeta$, we have
			\begin{equation*}
				\big|V(G) \setminus \big(Z_1' \cup \ldots \cup Z_{r}' \cup V(\mathcal K)\big)\big| 
				\le 64r^2 \cdot \frac{t}{c} \cdot \left(\frac{e}{2\zeta}\right)^t,
			\end{equation*}
			proving \ref{lem:balance:leftover}, and completing the proof of Lemma~\ref{lem:balancing}.
		\end{proof}

\section{Packing $K_{t,t}$'s in expanders} \label{sec:packings}

	Our second key lemma for proving Theorem~\ref{mainthm:packingsubgraphs} is as follows. This lemma shows that expanders admit a perfect $K_{t,t}$-packing if their number of vertices is divisible by $2t$. It will be used to find a $K_{t,t}$-packing that covers all but at most $2t-1$ vertices in each of the expanders provided by Lemma~\ref{lem:balancing}.

	\begin{lemma} \label{lem:Ktt-packing-expander}
		Let $t \ge 2$, $n_0$ be integers, let $0 < c, \gamma, \zeta, \eta < 1$ and suppose that $1/n_0 \ll \eta \ll \zeta \ll \gamma \ll c, 1/t$. Let $n \ge n_0$ be an integer divisible by $2t$, let $d \ge cn$ and let $G$ be an $n$-vertex graph with the following properties.
		\begin{enumerate}[label = \rm(Q\arabic*)]
			\item \label{itm:pack-expander-degree}
				$G$ has average degree at least $d - \eta n$ and maximum degree at most $d$.
			\item \label{itm:pack-expander-cuts}
				$G$ has no $\zeta$-sparse cuts.
			\item \label{itm:pack-expander-dichotomy} 
				$G$ is either bipartite and balanced, or $\gamma$-far-from-bipartite.
		\end{enumerate}
		Then $G$ has a perfect $K_{t,t}$-packing.
	\end{lemma}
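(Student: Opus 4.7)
The plan is to implement the strategy outlined in \Cref{subsec:proofsketchmatchingtemplate}. The first step is to apply the regularity lemma (\Cref{lem:regularity}) to $G$ with parameters $\eps \ll \mu \ll 1/t$, obtaining clusters $V_1, \ldots, V_m$ of a common size, an exceptional set $V_0$ of size at most $\eps n$, and a spanning subgraph $G' \subseteq G \setminus V_0$. A standard transfer computation shows that the reduced graph $\Gamma$ (on vertex set $\{V_1, \ldots, V_m\}$) inherits, with slightly worsened parameters, the three hypotheses \ref{itm:pack-expander-degree}--\ref{itm:pack-expander-dichotomy}: $\Gamma$ is almost regular, has no $\zeta'$-sparse cut, and is either balanced bipartite or $\gamma'$-far-from-bipartite.

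Next, I would construct a collection of vertex-disjoint super-regular pairs covering almost all of $V(G) \setminus V_0$. Applying \Cref{lem:hamilton-cycle} to $\Gamma$ (after, in the bipartite case, discarding a small vertex set to achieve balance) yields a Hamilton cycle in $\Gamma$, and hence a perfect fractional matching; an LP-rounding/parity argument then produces a perfect $2$-matching in $\Gamma$, i.e.\ a spanning collection of vertex-disjoint edges and odd cycles. Splitting each cluster $V_i$ into two equal halves along the $2$-matching turns this into a perfect matching in the refined reduced graph, and then \Cref{claim:super-regular} allows me to discard at most an $\eps$-fraction of each new cluster to obtain vertex-disjoint super-regular pairs $G'[U_{2i-1}, U_{2i}]$ for $i \in [m]$, where the sets $U_j$ all have the same size and the discarded vertices are absorbed into an enlarged exceptional set $V_0^*$.

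I now construct three successive vertex-disjoint collections $\cK_1, \cK_2, \cK_3$ of $K_{t,t}$-copies in $G$. The collection $\cK_1$ covers every vertex of $V_0^*$ while using few vertices from each $U_i$: each exceptional vertex $v$ has linearly many neighbours in $G$, so by pigeonhole many lie in some pair $U_i, U_j$, and a greedy application of the K\H{o}v\'ari--S\'os--Tur\'an theorem (\Cref{lem:unbalanced-kst}) together with the super-regularity of $G'[U_{2i-1}, U_{2i}]$ produces a $K_{t,t}$ through $v$; iterating while maintaining disjointness yields $\cK_1$ of bounded size. The collection $\cK_2$ has constant size and is chosen cluster by cluster so that $|U_i \setminus V(\cK_1 \cup \cK_2)|$ is divisible by $t$ for every $i \in [2m]$; a single super-regular $K_{t,t}$ within one pair adjusts both sides at once, and one may also arrange initially that $|U_i|$ is divisible by $2t$.

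The main obstacle, and the novelty of the proof, is constructing $\cK_3$ so as to equalise the number of uncovered vertices on the two sides of each super-regular pair. Here I follow the matching-template strategy from the sketch: choose a random vertex set $V(H)$ with exactly $|U_i|/(2t)$ vertices in each $U_i$, and use \Cref{hypergeometricconcentration} to show that $G[V(H)]$ inherits, with slightly worse parameters, hypotheses \ref{itm:pack-expander-degree}--\ref{itm:pack-expander-dichotomy}. By \Cref{lem:hamilton-cycle}, $G[V(H)]$ is then \emph{robustly matchable}: for any small set $S \subseteq V(H)$ that balances its two sides in the bipartite case, $G[V(H)] \setminus S$ has a Hamilton cycle and hence a perfect matching $\cM$. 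Pick $S$ with $|S \cap U_i| = |V(\cK_1 \cup \cK_2) \cap U_i|/t$ for each $i \in [2m]$, which is integer-valued by the divisibility ensured by $\cK_2$. For every edge of $\cM$ joining $U_i$ and $U_j$, greedily select a vertex-disjoint $K_{t,t}$ in $G$ with $t$ vertices in each of $U_i$ and $U_j$, again via \Cref{lem:unbalanced-kst} and super-regularity. The counting identity from the sketch now gives $|U_i \setminus V(\cK_1 \cup \cK_2 \cup \cK_3)| = |U_i|/2$ for every $i$, so each residual pair is balanced and remains super-regular with good parameters; a final application of the blow-up lemma (\Cref{lem:blowup}) finds a perfect $K_{t,t}$-packing in each residual pair, and taking the union with $\cK_1 \cup \cK_2 \cup \cK_3$ yields the desired perfect $K_{t,t}$-packing in $G$.
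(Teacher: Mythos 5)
Your overall architecture is the same as the paper's (regularity lemma, perfect $2$-matching in the reduced graph, super-regular pairs, three correcting collections $\cK_1,\cK_2,\cK_3$, blow-up lemma), but two of your steps do not work as described.

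First, you assert that the reduced graph $\Gamma$ inherits the near-regularity hypothesis \ref{itm:pack-expander-degree} and then apply \Cref{lem:hamilton-cycle} to $\Gamma$. This is unjustified: the degree of a cluster in $\Gamma$ counts dense pairs regardless of whether their density is $\mu$ or $1$, so even for an exactly regular $G$ the maximum degree of $\Gamma$ can exceed its average degree by a factor of order $1/\mu$. Hence $\Gamma$ need not satisfy the ``maximum degree at most $d$, average degree at least $d-\eta n$'' hypothesis of \Cref{lem:hamilton-cycle}, and the robust-expansion verification in the proof of that lemma genuinely uses near-regularity. The paper sidesteps this by applying \Cref{lem:hamilton-cycle} to $G'$ itself (which does inherit near-regularity from \ref{itm:reg-4}), and converting the resulting Hamilton cycle of $G'$ into a perfect \emph{fractional} matching of $\Gamma$ by weighting each reduced edge by the number of cycle edges it carries; \Cref{fact:fractional-matching} then yields the perfect $2$-matching.

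Second, and more seriously, your mechanism for $\cK_2$ is broken. A copy of $K_{t,t}$ placed inside a single super-regular pair $G'[U_{2i-1},U_{2i}]$ necessarily has exactly $t$ vertices in each of $U_{2i-1}$ and $U_{2i}$, so it leaves every residue $|U_i\setminus V(\cK_1\cup\cK_2)| \bmod t$ unchanged and cannot repair divisibility. Fixing the residues requires copies of $K_{t,t}$ that straddle \emph{three} clusters unevenly (with $t$, $f$ and $t-f$ vertices), transported along even-length walks in the reduced graph (via an odd cycle there in the non-bipartite case, whose existence must be extracted from \ref{itm:pack-expander-dichotomy}); moreover this is only feasible after verifying the global congruences $\sum_{i\in[2m]} f(i)\equiv 0 \pmod{t}$ and, in the bipartite case, $\sum_{i\in[m]} f(2i)\equiv 0\pmod{t}$, which the paper deduces from $|V_0|$ and $|V(\cK_1)|$ being divisible by $2t$ and from each member of $\cK_1$ meeting $V_0$ in exactly one vertex. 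None of this appears in your proposal. The remaining steps (covering $V_0$, the random matching template for $\cK_3$, the balancing identity, and the final blow-up) do follow the paper's proof; you would also need to reserve disjoint portions of each $U_i$ for the successive collections, as the paper does with its five-way random split, to keep the bookkeeping and the final super-regularity intact.
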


	Note that the following corollary, where we drop the assumption of divisibility from \Cref{lem:Ktt-packing-expander} and allow for up to $2t-1$ uncovered vertices, follows directly from \Cref{lem:Ktt-packing-expander}. Indeed, given $G$ as in the next corollary, remove up to $2t-1$ vertices to obtain a graph $G'$ whose number of vertices $n'$ is divisible by $2t$. It is easy to see that the properties \ref{itm:pack-expander-degree}-- \ref{itm:pack-expander-dichotomy} above hold for $G'$, with slightly worse parameters, say, with $n_0 - 2t, c/2, \gamma/2, 2\eta, \zeta/2, n', d$ in place of $n_0, c, \gamma, \eta, \zeta, n, d$ respectively.

	\begin{corollary} \label{cor:Ktt-packing-expander}
		Let $t \ge 2$, $n_0$ be integers, let $0 < c, \gamma, \zeta, \eta < 1$ and suppose that $1/n_0 \ll \eta \ll \zeta \ll \gamma \ll c, 1/t$. Let $n \ge n_0$, let $d \ge cn$ and let $G$ be an $n$-vertex graph with the following properties.
		\begin{enumerate}[label = \rm(\arabic*)]
			\item 
				$G$ has average degree at least $d - \eta n$ and maximum degree at most $d$.
			\item 
				$G$ has no $\zeta$-sparse cuts.
			\item 
				$G$ is either bipartite and balanced, or $\gamma$-far-from-bipartite.
		\end{enumerate}
		Then, $G$ has a $K_{t,t}$-packing covering all but at most $2t-1$ vertices.
	\end{corollary}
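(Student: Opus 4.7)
The plan is to follow closely the strategy outlined in Section~\ref{subsec:proofsketchmatchingtemplate}. First I apply the regularity lemma (\Cref{lem:regularity}) with parameters chosen so that $\eta \ll \eps \ll \mu \ll \zeta$, producing an exceptional set $V_0$, equal-sized clusters $V_1, \ldots, V_m$, and a spanning subgraph $G'$ of $G \setminus V_0$. The reduced graph $\Gamma$ inherits from $G$ nearly matching average and maximum degrees (using \ref{itm:pack-expander-degree} and \ref{itm:reg-4}), the absence of $(\zeta/2)$-sparse cuts (from \ref{itm:pack-expander-cuts}), and the bipartite dichotomy (from \ref{itm:pack-expander-dichotomy} and \ref{itm:reg-2}); in the bipartite case, $\Gamma$ can be made balanced by moving at most $|V_0|$ extra vertices into the exceptional set. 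Applying \Cref{lem:hamilton-cycle} to $\Gamma$ yields a Hamilton cycle and hence a perfect fractional matching, which a standard flow argument converts into a perfect $2$-matching, i.e.\ a spanning collection of disjoint edges and cycles. Splitting each cluster $V_i$ into two halves of equal size and using the $2$-matching to pair them up produces pairs $(U_{2i-1}, U_{2i})$, $i \in [m]$, with each $G'[U_{2i-1}, U_{2i}]$ still $\eps$-regular of density more than $\mu$; then \Cref{claim:super-regular} turns each pair into a $(\eps', \mu/2)$-super-regular pair by moving a small fraction of vertices into $V_0$, while preserving $|U_{2i-1}| = |U_{2i}|$.

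Next I construct three vertex-disjoint $K_{t,t}$-collections $\cK_1, \cK_2, \cK_3$. For $\cK_1$, I cover each $v \in V_0$ greedily by a $K_{t,t}$ in $G$: since $v$ has at least $d - \eta n$ neighbours, most lie inside some super-regular pair $(U_i, U_j)$ that has not yet been heavily used, and the unbalanced K\H{o}v\'ari--S\'os--Tur\'an theorem (\Cref{lem:unbalanced-kst}) applied inside $G[N(v) \cap U_i, \, U_j]$ locates a $K_{t,t}$ containing $v$. Since $|V_0|$ is very small relative to $|U_i|$, each pair loses only $o(|U_i|)$ vertices to $\cK_1$. The collection $\cK_2$ is a bounded-per-pair number of $K_{t,t}$'s, built analogously inside the super-regular pairs, chosen so that $|U_i \setminus V(\cK_1 \cup \cK_2)|$ is divisible by $t$ for every $i \in [2m]$.

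The main obstacle, and the conceptual heart of the proof, is the balancing collection $\cK_3$. I select a random subset $R \subseteq V(G) \setminus V(\cK_1 \cup \cK_2)$ with $|R \cap U_i| = |U_i|/(2t)$ for every $i \in [2m]$, by sampling each vertex independently with probability $1/(2t)$ and conditioning via \Cref{hypergeometricconcentration}. A standard second-moment argument shows that with positive probability the induced subgraph $H \coloneqq G[R]$ inherits from $G$ the relevant properties: average degree close to $d/(2t)$, maximum degree at most $d/(2t)$, no $(\zeta/4)$-sparse cuts, and the same bipartite dichotomy. In the bipartite case $H$ is also balanced, since each $U_i$ lies in one part of $G$ and the two parts contain equal numbers of $U_i$'s. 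Applying \Cref{lem:hamilton-cycle} to $H$ after removing a set $S \subseteq V(H)$ with $|S \cap U_i| = |V(\cK_1 \cup \cK_2) \cap U_i|/t$ for every $i$ yields a Hamilton cycle of $H \setminus S$, and in particular a perfect matching $\cM$; note that $S$ is small and, in the bipartite case, balanced between the parts, because every $K_{t,t}$ in $\cK_1 \cup \cK_2$ contributes exactly $t$ vertices to each side. I then construct $\cK_3$ by picking, for every edge $uv \in \cM$ with $u \in U_i$, $v \in U_j$, a $K_{t,t}$ in $G[U_i, U_j]$ containing $u$ and $v$, chosen greedily so as to be vertex-disjoint from previous choices; this is possible via super-regularity together with \Cref{lem:unbalanced-kst}. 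The arithmetic check carried out in Section~\ref{subsec:proofsketchmatchingtemplate} then gives $|U_i \setminus V(\cK_1 \cup \cK_2 \cup \cK_3)| = |U_i|/2$ for every $i$, so each leftover pair $(U_{2i-1}', U_{2i}')$ is balanced. A final application of the blow-up lemma (\Cref{lem:blowup}) to each super-regular pair $G'[U_{2i-1}', U_{2i}']$ produces a perfect $K_{t,t}$-packing there, and taking the union with $\cK_1 \cup \cK_2 \cup \cK_3$ yields the desired perfect $K_{t,t}$-packing of $G$.
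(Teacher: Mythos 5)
Your proposal essentially re-derives the main packing lemma (\Cref{lem:Ktt-packing-expander}) from scratch rather than giving the short reduction that the corollary actually calls for, and in doing so it misses the one point that distinguishes the corollary from that lemma: divisibility. You conclude with ``the desired perfect $K_{t,t}$-packing of $G$'', but a perfect $K_{t,t}$-packing cannot exist unless $2t \mid n$ (the order $n$ could be odd), and the corollary allows up to $2t-1$ uncovered vertices precisely because no divisibility of $n$ is assumed. The gap is not confined to the last sentence: the construction of $\cK_2$ --- choosing a bounded collection of $K_{t,t}$'s so that $|U_i \setminus V(\cK_1 \cup \cK_2)|$ is divisible by $t$ for every $i$ --- is only possible if the residues $-|U_i \cap V(\cK_1)| \bmod t$ sum to $0$ modulo $t$ (and, in the bipartite case, split appropriately between the two sides), since each added $K_{t,t}$ changes the total count of covered vertices by $2t$. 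In the paper this congruence is extracted from $|V(\cK_1)| \equiv |V_0| \equiv 0 \pmod{2t}$, and arranging $2t \mid |V_0|$ uses $2t \mid n$ together with making each $|U_i|$ divisible by $2t$. Without it, your template-matching step cannot terminate with $|U_i'| = |U_i|/2$ for all $i$ and $t \mid |U_i'|$, so the blow-up lemma cannot be applied.

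The repair is exactly the paper's (two-line) proof of the corollary: first delete at most $2t-1$ vertices of $G$ (equally from both sides in the balanced bipartite case) so that the remaining graph has order divisible by $2t$, check that properties \ref{itm:pack-expander-degree}--\ref{itm:pack-expander-dichotomy} survive with slightly worse constants (say $c/2$, $2\eta$, $\zeta/2$, $\gamma/2$), and then invoke \Cref{lem:Ktt-packing-expander} --- whose proof is, in substance, the argument you sketched --- to obtain a perfect packing of the trimmed graph. A secondary slip worth flagging: in building $\cK_1$ you argue that each exceptional vertex ``has at least $d - \eta n$ neighbours''; that is the average degree of $G$, not a minimum-degree bound, and the correct lower bound to use is the minimum degree at least $\zeta(n-1)$ coming from the absence of $\zeta$-sparse cuts.
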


	Let us now proceed with the proof of \Cref{lem:Ktt-packing-expander}.

	\begin{proof}[ of Lemma~\ref{lem:Ktt-packing-expander}]
		Let $\eps, \mu, \xi$ be positive numbers satisfying 
		\begin{equation*}
			1/n_0 \ll \eps \ll \mu \ll \eta \ll \xi \ll \zeta.
		\end{equation*}
		Apply the regularity lemma (\Cref{lem:regularity})  to $G$ with parameters $\eps$ and $\mu$, to obtain positive numbers $M = M(\eps)$, $m$ with $m \le M$, a subgraph $G'$ and a partition $\{V_0, \ldots, V_m\}$ of $V(G)$ into clusters $V_1, \ldots, V_m$ and an exceptional set $V_0$ satisfying properties \ref{itm:reg-1}--\ref{itm:reg-6}. 
		Let $\Gamma$ be the corresponding reduced graph, namely, it has vertex set $\{V_1, \ldots, V_m\}$ and $V_iV_j$ is an edge in $\Gamma$ if and only if $G'[V_i, V_j]$ is $\eps$-regular with density more than $\mu$. Equivalently, $V_iV_j$ is an edge in $\Gamma$ if and only if there is an edge of $G'$ between $V_i$ and $V_j$ (by \ref{itm:reg-6}). 

		Notice that if $G$ is bipartite, then both $G'$ and $\Gamma$ are bipartite. Indeed, if $G$ is bipartite then $G'$ is bipartite since it is a subgraph of $G$, and $\Gamma$ is bipartite because every $V_i$ with $i \in [m]$ is contained in one of the parts of $G$ by \ref{itm:reg-2}. 

		In the rest of the proof, we will assume that $m$ is even, and that if $G$ is bipartite and balanced, then both $G'$ and $\Gamma$ are bipartite and balanced. Indeed, since all clusters $V_i$ with $i \in [m]$ are equal in size — each of size at most $\varepsilon n$ by \ref{itm:reg-3} — and each $V_i$ lies entirely within one of the two parts of the bipartition of $G$ by \ref{itm:reg-2}, this adjustment can be made, if necessary, by moving the vertices of some clusters, covering up to $\eps n$ vertices, from $G'$ into $V_0$. Moreover, after (potentially) moving these vertices, it still follows from \ref{itm:reg-3} that
		\begin{equation}
			\label{eqn:boundV0}
			|V_0| \le 2\eps n,  
		\end{equation}
		and by \ref{itm:reg-4}, for every $v \in V(G')$ we have 
		\begin{equation}
			\label{eqn:lossofedgesG'toG}
			d_{G'}(v) \ge d_G(v) - (\mu + 2 \eps)n.
		\end{equation}

		We claim that, after this potential adjustment, $G'$ satisfies the following properties.
        
		\begin{enumerate}[label = \rm(G'\arabic*)]
			\item \label{itm:Gprime-1}
				$G'$ has average degree at least $d - 3\eta n$.
			\item \label{itm:Gprime-2}
				$G'$ has no $\zeta/4$-sparse cuts. (In particular, $G'$ has minimum degree at least $\zeta/4 \cdot (|V(G')|-1) \ge \zeta n/5.$)
			\item \label{itm:Gprime-3}
				If $G$ is bipartite and balanced, then $G'$ is  bipartite and balanced. Otherwise, one needs to remove at least $\gamma n^2/2$ edges from $G'$ to make it bipartite. (In particular, $G'$ is $\gamma/2$-far-from-bipartite.) 
		\end{enumerate}
		Indeed, \ref{itm:Gprime-1} follows directly from \eqref{eqn:lossofedgesG'toG} and the fact that the average degree of $G$ is at least $d - \eta n$ and $\eps, \mu \ll \eta$. To prove \ref{itm:Gprime-2}, we use \eqref{eqn:lossofedgesG'toG} together with the fact that $G$ has no $\zeta$-sparse cuts, as follows. For every partition $\{X, Y\}$ of $V(G')$ with $|X| \le |Y|$, we have
		\begin{align*}
			e_{G'}(X,Y) 
			& \ge e_G(X, V(G) \setminus X) - |X|(\mu + 2\eps )n \\
			& \ge \zeta |X| \cdot (n - |X|) - |X|(\mu + 2\eps)n \\
			& \ge \left(\frac{\zeta}{2} - (\mu + 2\eps)\right) \cdot |X| \cdot n
			\ge \frac{\zeta}{4} \cdot |X| \cdot |Y|,
		\end{align*}
		using $|X| \le n/2$ for the third inequality, and $\eps, \mu \ll \zeta$ for the last inequality. This proves \ref{itm:Gprime-2}. 
		Finally, to prove \ref{itm:Gprime-3}, observe that if $G$ is bipartite and balanced, then, as noted before, we can assume that $G'$ is also bipartite and balanced. Otherwise, by our assumption in \Cref{lem:Ktt-packing-expander}, $G$ is $\gamma$-far-from-bipartite, in which case we claim that at least $\gamma n^2/2$ edges need to be removed from $G'$ to make it bipartite. Indeed, by \eqref{eqn:boundV0} and \eqref{eqn:lossofedgesG'toG}, the number of edges in $G$ that are not present in $G'$ is at most $|V_0| \cdot n + n \cdot (\mu + 2\varepsilon) n \le (\mu + 4\varepsilon) n^2 \le \gamma n^2/2$. Since $G$ is $\gamma$-far-from-bipartite, at least $\gamma n^2$ edges must be removed to make it bipartite. Therefore, even after accounting for the fact that at most $\gamma n^2/2$ of these edges are missing in $G'$, one still needs to remove at least $\gamma n^2/2$ edges from $G'$, proving \ref{itm:Gprime-3}.

	\subsection{Finding vertex-disjoint super-regular subgraphs that cover all vertices outside $V_0$}

		Recall that the clusters $V_i$, $i \in [m]$, have the same size by \ref{itm:reg-3}. For each $i \in [m]$, if $|V_i|$ is odd, move one vertex from $V_i$ to $V_0$. Now for each $i \in [m]$, partition $V_i$ into two sets $V_i', V_i''$ of equal size, let $\Gamma_{\rm{ref}}$ be the graph with the vertex set $\{V_1', V_1'', \ldots, V_m', V_m''\}$ whose edges are all the pairs $AB$ with $A \in \{V_i', V_i''\}$, $B \in \{V_j', V_j''\}$ where $V_i V_j$ is an edge in $\Gamma$. (In particular, $\Gamma_{\rm{ref}}$ is the $2$-lift of $\Gamma$.)
		Then note that every edge in $\Gamma_{\rm{ref}}$ corresponds to a $3\eps$-regular subgraph of $G'$ with a density more than $\mu - \eps \ge \mu/2$. 

		We claim that $\Gamma_{\rm{ref}}$ contains a perfect matching. To see this, we first prove the following claim. 
		A \emph{$2$-matching} is a collection of pairwise vertex-disjoint edges and odd cycles. 
		A \emph{perfect $2$-matching} in a graph is a $2$-matching covering all vertices in the graph. 

		\begin{claim} \label{claim:perfect-2-matching}
			$\Gamma$ has a perfect $2$-matching.
		\end{claim}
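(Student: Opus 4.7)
The plan is to exploit the Hamiltonicity of dense expanders (\Cref{lem:hamilton-cycle}) together with a translation between Hamilton cycles of $G'$ and fractional matchings of $\Gamma$. Concretely, I would first find a Hamilton cycle $C$ in $G' = G'[V_1 \cup \cdots \cup V_m]$, then read off a perfect fractional matching on $\Gamma$ by recording how often $C$ crosses between each pair of clusters, and finally invoke the forthcoming \Cref{fact:fractional-matching} to pass from this fractional matching to an integral perfect $2$-matching.

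For the first step, I verify that $G'$ satisfies the hypotheses of \Cref{lem:hamilton-cycle} on its $n'' \coloneqq n - |V_0|$ vertices. The maximum degree of $G'$ is at most $d$ since $G' \subseteq G$; by \ref{itm:Gprime-1} the average degree of $G'$ is at least $d - 3\eta n$; by \ref{itm:Gprime-2}, $G'$ has no $\zeta/4$-sparse cuts; and by \ref{itm:Gprime-3}, $G'$ is either bipartite and balanced or $\gamma/2$-far-from-bipartite. Since $|V_0| \le 2\eps n$ by \eqref{eqn:boundV0} and $\eps$ sits at the bottom of the hierarchy, the hierarchy $\eta \ll \xi \ll \zeta, \gamma \ll c$ is preserved up to harmless constant factors on the reduced graph $G'$. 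Applying \Cref{lem:hamilton-cycle} with $W = \emptyset$ (and, in the bipartite case, using that $G'$ is balanced) produces a Hamilton cycle $C$ in $G'$.

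Next, let $k$ denote the common size of the clusters $V_1, \ldots, V_m$ (equal after the possible adjustment to make $m$ even and, if needed, ensure balancedness). For each edge $V_iV_j \in E(\Gamma)$ define
\begin{equation*}
    x(V_iV_j) \coloneqq \frac{e_C(V_i, V_j)}{2k}.
\end{equation*}
Because no edge of $G'$ lies within a single cluster (by \ref{itm:reg-5}), every edge of $C$ contributes to some $e_C(V_i, V_j)$ with $i \neq j$, so $x$ is a nonnegative weighting supported on $E(\Gamma)$ (an edge of $C$ between $V_i$ and $V_j$ certifies that $V_iV_j \in E(\Gamma)$ in view of the definition of $\Gamma$). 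Each vertex of $V_i$ has degree exactly $2$ in $C$ and none of these edges stays inside $V_i$, so the number of $C$-edges with exactly one endpoint in $V_i$ is $2k$; hence $\sum_{V_j : V_iV_j \in E(\Gamma)} x(V_iV_j) = 1$ for every vertex $V_i$ of $\Gamma$, showing that $x$ is a perfect fractional matching on $\Gamma$.

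Finally, I apply \Cref{fact:fractional-matching} to convert $x$ into a perfect $2$-matching of $\Gamma$; this rests on the classical fact that the vertices of the perfect fractional matching polytope are precisely characteristic vectors of collections of vertex-disjoint edges and odd cycles covering all vertices, so non-emptiness of the polytope forces the existence of a perfect $2$-matching. There is no conceptual obstacle specific to this claim: the Hamiltonicity lemma and the fractional-to-integral rounding are the two heavy tools, both available off the shelf. The only real care required is bookkeeping the constants in the hierarchy after passing from $G$ to $G'$, which is handled by \ref{itm:Gprime-1}--\ref{itm:Gprime-3}.
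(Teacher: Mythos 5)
Your proposal is correct and follows essentially the same route as the paper: apply \Cref{lem:hamilton-cycle} to $G'$ (using \ref{itm:Gprime-1}--\ref{itm:Gprime-3}) to obtain a Hamilton cycle, convert it into a perfect fractional matching of $\Gamma$ by counting crossing edges and dividing by $2|V_1|$, and then invoke \Cref{fact:fractional-matching}. The verification that the weights sum to $1$ at each cluster (via \ref{itm:reg-5} and the degree-$2$ condition on the cycle) is exactly the paper's argument.
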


		Before we prove the claim, recall that a \emph{fractional matching} of a graph $G$ is an edge weighting $w :E(G) \to [0,1]$, such that $\sum_{u \in N_G(v)}w(uv) \le 1$, for every vertex $v$ in $G$. A \emph{perfect fractional matching} is a fractional matching where we have equality $\sum_{u \in N_G(v)}w(uv) = 1$ for every vertex $v$ in $G$.
		We need the following useful fact.

		\begin{fact} \label{fact:fractional-matching}
			Let $G$ be a graph which has a perfect fractional matching. Then it has a perfect $2$-matching.
		\end{fact}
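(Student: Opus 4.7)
The plan is to reduce to the well-known fact that the fractional perfect matching polytope is half-integral, and then read off the $2$-matching from the structure of the support of a half-integral vertex. Concretely, I would consider the polytope
\begin{equation*}
	P \;\coloneqq\; \left\{ w \colon E(G) \to [0,1] \;:\; \textstyle\sum_{u \in N_G(v)} w(uv) = 1 \text{ for every } v \in V(G) \right\},
\end{equation*}
which is non-empty by hypothesis, and pick an extreme point $w \in P$. The first step is to show that $w$ takes values only in $\{0, \tfrac{1}{2}, 1\}$.

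For the half-integrality, I would argue by contradiction: if $w$ uses some value in $(0,1) \setminus \{\tfrac12\}$, consider the subgraph $F$ of edges with fractional weight. Each vertex of $F$ is incident to either two fractional edges summing to $1$ (if it is not matched by a weight-$1$ edge) or none, so $F$ is a disjoint union of cycles and paths whose endpoints are vertices of $F$-degree $1$. On any even cycle, or on any path whose two incident sums at the endpoints are both $1$, one can perturb the weights alternately by $\pm \varepsilon$ for small $\varepsilon > 0$ while staying in $P$, contradicting extremality. The only remaining configurations are odd cycles with all weights exactly $\tfrac12$, giving $w \in \{0, \tfrac12, 1\}^{E(G)}$.

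Given this, the perfect $2$-matching falls out immediately. Let $M \coloneqq \{ e \in E(G) : w(e) = 1\}$ and $F \coloneqq \{ e \in E(G) : w(e) = \tfrac12\}$. The constraints in $P$ force $M$ to be a matching, and they force every vertex not saturated by $M$ to have exactly two incident edges in $F$ (so that $\tfrac12 + \tfrac12 = 1$). Hence $F$ is a disjoint union of cycles on the unsaturated vertices, and by the extremality argument above every such cycle is odd. Thus $M$ together with the cycles of $F$ forms a collection of pairwise vertex-disjoint edges and odd cycles covering $V(G)$, i.e.\ a perfect $2$-matching.

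The main (mild) obstacle is the half-integrality step, since one must rule out fractional values different from $\tfrac12$; but as sketched above, this follows by a direct perturbation argument on the subgraph $F$ of fractional edges and does not require any deeper LP machinery. No other step needs more than a line or two.
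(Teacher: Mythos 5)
Your overall strategy (take an extreme point of the perfect fractional matching polytope, prove half-integrality, read off the $2$-matching from the support) is sound and is essentially a repackaging of the paper's argument, which instead takes a perfect fractional matching minimising the number of edges with weight in $(0,1)$ and kills the same configurations by the same perturbations. However, your half-integrality step has a genuine gap. The claim that ``each vertex of $F$ is incident to either two fractional edges summing to $1$ \ldots or none'' is false: the constraints only force each vertex of $F$ to have degree \emph{at least} $2$ in $F$ (a single fractional edge at a vertex would make the vertex sum non-integral), but a vertex may well be incident to three or more fractional edges (e.g.\ weights $\tfrac13,\tfrac13,\tfrac13$). The statement that every vertex of $F$ has degree exactly $0$ or $2$ is part of the \emph{conclusion} you are trying to prove, not a consequence of the constraints, so your decomposition of $F$ into cycles and paths is not justified. (The paths are a red herring in any case, since degree-$1$ vertices cannot occur.)

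Concretely, your case analysis omits the hardest configuration: a component of $F$ with a vertex of degree at least $3$, equivalently a component containing two distinct cycles. If either cycle is even you are fine, but if both are odd you need a separate perturbation — alternate $\pm x$ around the first odd cycle, $\pm 2x$ along a shortest connecting path, and $\pm x$ around the second odd cycle (with signs arranged according to the parity of the path) — to contradict extremality. This is exactly the case the paper's proof spends most of its effort on, including the observation that two odd cycles in a component with no even cycles can share at most one vertex. Once you add this case, your argument closes: each component of $F$ is a single odd cycle, the vertex constraints around an odd cycle force all its weights to equal $\tfrac12$, and the $2$-matching falls out as you describe.
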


		This is a standard fact, but for completeness we prove it in Appendix~\ref{fracmatchingimplies2matching}.

		\begin{proof}[ of Claim~\ref{claim:perfect-2-matching}]
			By \ref{itm:Gprime-1}--\ref{itm:Gprime-3} and Lemma~\ref{lem:hamilton-cycle}, the graph $G'$ has a Hamilton cycle, which we denote by $C = (v_1 \ldots v_{n'})$, where $n' = |V(G')|$.
			Let $w : E(\Gamma) \to \bR^{\ge 0}$ be the edge-weighting where $w(V_iV_j)$ is the number of edges in $C$ with one end in $V_i$ and the other in $V_j$, divided by $2|V_1|$.
			We claim that $w$ is a perfect fractional matching in $\Gamma$; that is, $\sum_{j \in [m]} w(V_iV_j) = 1$ for every $i \in [m]$.
			Indeed, for every  $i \in [m]$, since $G'$ has no edges with both ends in $V_i$, the number of edges in $C$ with an end in $V_i$ is exactly $2|V_i| = 2|V_1|$, so $\sum_{j \in [m]} w(V_iV_j) = 1$ for every $i \in [m]$, as claimed.
			By \Cref{fact:fractional-matching}, this implies that $\Gamma$ has a perfect $2$-matching, as required.
		\end{proof}

		Let $\cM$ be a perfect $2$-matching in $\Gamma$.
		Replace each isolated edge $V_iV_j$ in $\cM$ with two vertex-disjoint edges $V_i'V_j''$ and $V_i''V_j'$ in $\Gamma_{\rm{ref}}$ and each odd cycle $(V_{i_1}, \ldots, V_{i_{2k+1}})$ (for some $k \ge 1$) in $\cM$ with the edges $V_{i_j}' V_{i_{j+1}}'' \in E(\Gamma_{\rm{ref}})$ for $j \in [2k+1]$ (where addition in the indices is taken modulo $2k+1$), resulting in a perfect matching  $\cM_{\rm{ref}}$ in $\Gamma_{\rm{ref}}$.

		Let $U_1 U_2, \ldots, U_{2m-1} U_{2m}$ denote the edges of $\cM_{\rm{ref}}$. Since every edge in $\Gamma_{\rm{ref}}$ corresponds to a $3\eps$-regular subgraph of $G'$ with density more than $\mu/2$, it follows that for every $i \in [m]$, $G'[U_{2i-1}, U_{2i}]$ is $3\eps$-regular with density more than $\mu/2$. Furthermore, by \ref{itm:reg-6}, for all $i,j \in [2m]$, the graph $G'[U_i, U_j]$ is $3\eps$-regular  with density either $0$ or more than $\mu/2$.

		By \Cref{claim:super-regular}, we can move $3\eps|U_i|$ vertices from $U_i$ (for every $i \in [2m]$) into the set $V_0$, so that the resulting subgraphs $G'[U_{2i-1}, U_{2i}]$ are $(4\eps, \mu/4)$-super-regular for every $i \in [m]$.
		By moving less than $2t$ additional vertices from each $U_i$ to $V_0$, we also assume that $|U_i|$ is divisible by $2t$ for every $i \in [2m]$. We claim that after moving these vertices, the following properties hold.
		\begin{enumerate}[resume, label = \rm(G'\arabic*)] 
			\item \label{itm:Gprime-4} 
				We have $\bigcup_{i =1}^{2m} U_i = V(G) \setminus V_0 = V(G')$, $|U_1| = \ldots = |U_{2m}|$, and $|V_0|, |U_1|, \ldots, |U_{2m}|$ are divisible by $2t$. Furthermore, if $G$ is bipartite and balanced with the bipartition $\{X, Y\}$, then we can assume that $\bigcup_{i = 1}^{m} U_{2i-1} = X \setminus V_0$ and $\bigcup_{i = 1}^{m} U_{2i} = Y \setminus V_0$ (and hence $G'$ is still bipartite and balanced).
			\item \label{itm:Gprime-5} 
				For every $i \in [m]$, $G'[U_{2i-1}, U_{2i}]$ is $(5\eps, \mu/5)$-super-regular.
			\item \label{itm:Gprime-6} 
				For all $i,j \in [2m]$, the graph $G'[U_i, U_j]$ is $5\eps$-regular  with density either $0$ or more than $\mu/5$.
		\end{enumerate}

		Items \ref{itm:Gprime-5} and \ref{itm:Gprime-6} follow easily from the discussion above. To see why \ref{itm:Gprime-4} holds, recall that, by \ref{itm:reg-3}, the clusters $V_i$, $i \in [m]$ have the same size, and by definition, the sets $U_i$, $i \in [2m]$, had the same size before an equal number of vertices from each of these sets are moved to $V_0$. Furthermore, if $G$ is bipartite and balanced with the bipartition $\{X, Y\}$, then every cluster $V_i$, $i \in [m]$ is contained in one of the parts of $G$ by \ref{itm:reg-2}, and moreover, as noted earlier, $G'$ and the reduced graph $\Gamma$ are both bipartite and balanced. Hence, the $2$-matching $\mathcal M$ in $\Gamma$ is, in fact, a perfect matching in this case. Hence,  $\mathcal M_{\rm ref}$ is also a perfect matching.
Thus, without loss of generality, even after moving the vertices, we may assume $\bigcup_{i = 1}^{m} U_{2i-1} = X \setminus V_0$ and $\bigcup_{i = 1}^{m} U_{2i} = Y \setminus V_0$, as desired, proving \ref{itm:Gprime-4}. 

		Since a total of at most $m + 3 \eps n + 4tm \le 4 \eps n$ vertices have been moved into $V_0$ from $G'$, by~\eqref{eqn:boundV0}, we have that 
		\begin{equation}
			\label{eqn:boundV0aftermove}
			|V_0| \le 6\eps n,
		\end{equation}
		and, by \ref{itm:Gprime-1}, we have that
		\begin{enumerate}[resume, label = \rm(G'\arabic*)] 
			\item \label{itm:Gprime-7}
				$G'$ has average degree at least $d - 3\eta n - 4\eps n \ge d - 6 \eta n$.
		\end{enumerate}
		Furthermore, we claim that the following two properties hold.
		\begin{enumerate}[resume, label = \rm(G'\arabic*)] 
			\item \label{itm:Gprime-8} 
				$G'$ has no $\zeta /12$-sparse cuts. Moreover, $G'$ has minimum degree at least $\zeta n/6.$
			\item \label{itm:Gprime-9} 
				If $G$ is bipartite and balanced, then $G'$ is  bipartite and balanced. Otherwise, one needs to remove at least $\gamma n^2/4$ edges from $G'$ to make it bipartite. 
		\end{enumerate}

		To see why \ref{itm:Gprime-8} holds, first note that after moving at most $4 \eps n$ vertices from $G'$ to $V_0$, the minimum degree of $G'$ is at least $\zeta n/5 - 4 \eps n \ge \zeta n/6$ by \ref{itm:Gprime-2}. Consider a partition $\{X, Y\}$ of $G'$ with $|X| \le |Y|$ and note that if $|X| \le \zeta n/12$, then, by the minimum degree condition, $e_{G'}(X,Y) \ge \zeta n/12 \cdot |X| \ge \zeta/12 \cdot |X| \cdot |Y|$, as desired. So we may assume that $|X|, |Y| \ge \zeta n/12$. But then $e_{G'}(X,Y) \ge \zeta/4 \cdot |X| \cdot |Y| - 4 \eps n^2 \ge \zeta /12 \cdot |X| \cdot |Y|$ since $G'$ had no $\zeta/4$-sparse cuts (by \ref{itm:Gprime-2}) and at most $4 \eps n^2$ edges are removed from $G'$ by moving at most $4 \eps n$ vertices from $G'$ to $V_0$. This shows that $G'$ has no $\zeta /12$-sparse cuts, completing the proof of \ref{itm:Gprime-8}. Finally, to prove \ref{itm:Gprime-9}, note that if $G$ is bipartite and balanced, then by \ref{itm:Gprime-4} and the above process of moving vertices, $G'$ remains bipartite and balanced after moving the vertices. Otherwise, we claim that one needs to remove at least $\gamma n^2/4$ edges from $G'$ to make it bipartite. Indeed, this follows from \ref{itm:Gprime-3} and the fact that at most $4 \eps n^2 \le \gamma n^2/4$ edges are removed from $G'$ after moving at most $4 \eps n$ vertices to $V_0$, proving \ref{itm:Gprime-9}.
		
	\subsection{Splitting the super-regular subgraphs}

		As shown in the previous subsection, the vertices outside $V_0$ can be covered by super-regular subgraphs $G'[U_{2i-1}, U_{2i}]$ for $i \in [m]$. In this subsection, we will show that
		if we randomly partition $U_i$ into five sets $\left\{U_i^{(1)}, \ldots, U_i^{(5)}\right\}$ of appropriate sizes for each $i \in [2m]$, then the subgraph $H$ of $G'$ induced by the set $\bigcup_{i \in [2m]}U_i^{(3)}$ still has good expansion properties (see  \ref{itm:U-6}) and, moreover, $H$ is either bipartite and balanced or is far from being bipartite (see  \ref{itm:U-7}).

		\begin{claim}
			For every $i \in [2m]$, there is a partition $\left\{U_i^{(1)}, \ldots, U_i^{(5)}\right\}$ of $U_i$ satisfying the following properties.
			\begin{enumerate}[label = \rm(U\arabic*)]
				\item \label{itm:U-1}
					$\big|U_i^{(1)}\big| = \big|U_i^{(2)}\big| = \xi |U_i|$ for $i \in [2m]$.

				\item \label{itm:U-2}
					$\big|U_i^{(3)}\big| = \frac{|U_i|}{2t}$ for $i \in [2m]$.

				\item \label{itm:U-3}
					$\big|U_i^{(4)}\big| = \frac{2 |U_i|}{3}$ for $i \in [2m]$.

				\item \label{itm:U-4}
					$\big|U_i^{(5)}\big| = (\frac{1}{3} - \frac{1}{2t} - 2\xi)|U_i|$ for $i \in [2m]$.

				\item \label{itm:U-5}
					For every $v \in V(G)$, $i \in [2m]$ and $j \in [5]$, if $d_G(v, U_i) \ge \eps|U_i|$, then $\frac{d_G\big(v, U_i^{(j)}\big)}{d_G(v,U_i)} = (1 \pm \eps) \frac{|U_i^{(j)}|}{|U_i|}$. 

					Similarly, for every $v \in V(G')$, $i \in [2m]$ and $j \in [5]$, if $d_{G'}(v, U_i) \ge \eps |U_i|$ then $\frac{d_{G'}\big(v, U_i^{(j)}\big)}{d_{G'}(v,U_i)} = (1 \pm \eps) \frac{|U_i^{(j)}|}{|U_i|}$. 

				\item \label{itm:U-6}
					The graph $H \coloneqq G'\left[\bigcup_{i \in [2m]}U_i^{(3)}\right]$ has no $\frac{\zeta}{20}$-sparse cuts. 
				\item \label{itm:U-7}
					If $G$ is bipartite and balanced, then $H = G'\left[\bigcup_{i \in [2m]}U_i^{(3)}\right]$ is also bipartite and balanced. Otherwise, one needs to remove at least $\frac{\gamma}{256t^2} n^2$ edges from $H$ to make it bipartite. 
			\end{enumerate}
		\end{claim}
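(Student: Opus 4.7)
The plan is to sample the five-part partition of each $U_i$ uniformly at random, with part sizes as prescribed; these sizes sum to $|U_i|$ since $\xi + \xi + \tfrac{1}{2t} + \tfrac{2}{3} + (\tfrac{1}{3} - \tfrac{1}{2t} - 2\xi) = 1$, so (U1)--(U4) hold by construction. For (U5), I would fix $v \in V(G)$, $i \in [2m]$ and $j \in [5]$ with $d_G(v, U_i) \ge \eps |U_i|$, note that $d_G(v, U_i^{(j)})$ is hypergeometric with mean linear in $n$ (since both $|U_i|$ and $|U_i^{(j)}|$ are $\Theta(n)$), and apply \Cref{hypergeometricconcentration} to obtain deviation probability $e^{-\Omega(n)}$. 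A union bound over the $O(n)$-many such triples, applied to both $G$ and $G'$, yields (U5) \whp.

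For (U6), I would use the standard principle that random induced subgraphs inherit expansion, combined with the observation that every bipartition $\{X_H, Y_H\}$ of $V(H)$ is the restriction of some bipartition $\{A^*, B^*\}$ of $V(G')$ (for instance, $A^* = X_H$ and $B^* = V(G') \setminus X_H$). When $\min(|X_H|, |Y_H|)$ lies below a suitable threshold of order $\zeta n/t$, a direct minimum-degree argument suffices, using that (U5) and (G'8) combine to give $d_H(v) \ge (1-\eps)\zeta n/(12 t)$ for every $v \in V(H)$. Otherwise, (G'8) applied to $\{A^*, B^*\}$ gives $e_{G'}(A^*, B^*) \ge (\zeta/12)|A^*||B^*|$, so in expectation $e_H(A^* \cap V(H), B^* \cap V(H)) = e_{G'}(A^*, B^*)/(4t^2) \ge (\zeta/12) \cdot |A^* \cap V(H)||B^* \cap V(H)|$, well above the $(\zeta/20)$ threshold. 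Concentration of this quantity, of $|A^* \cap V(H)|$, and of $|B^* \cap V(H)|$, within a $(1 \pm o(1))$ factor, combined with a union bound over the $2^{|V(G')|} \le 2^n$ bipartitions of $V(G')$, delivers (U6). For (U7), the bipartite-balanced case follows immediately from (G'4) and (U2), whereas the far-from-bipartite case is handled by the same template: one replaces $e_{G'}(A^*, B^*)$ by the monochromatic count $e_{G'}(X^*) + e_{G'}(Y^*)$, which by (G'9) is at least $\gamma n^2/4$ for every bipartition of $V(G')$, and after concentration plus union bound one obtains $e_H(X^* \cap V(H)) + e_H(Y^* \cap V(H)) \ge \gamma n^2/(256 t^2)$ for every bipartition of $V(H)$.

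The main obstacle is establishing concentration with tail $e^{-\Omega(n)}$ for the quadratic statistic $e_H(A^* \cap V(H), B^* \cap V(H))$, sharp enough to beat the $2^n$ union bound. The cleanest route is to pass to the independent-inclusion model, in which each $v \in V(G')$ lies in $V(H)$ independently with probability $1/(2t)$ and $e_H(A^* \cap V(H), B^* \cap V(H)) = \sum_{uv \in E(G'[A^*, B^*])} Z_u Z_v$ with the $Z_v$ independent Bernoulli random variables. Applying two successive Chernoff bounds---first on $d_{G'}(v, B^* \cap V(H))$ for each $v \in A^*$, then on $\sum_{v \in A^*} Z_v \cdot d_{G'}(v, B^* \cap V(H))$---gives the required tail; re-conditioning on the prescribed cluster sizes for $U_i^{(3)}$ multiplies each bad-event probability by only a polynomial factor $n^{O(M)}$, which is dwarfed by $e^{-\Omega(n)}$.
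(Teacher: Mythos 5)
Your treatment of \ref{itm:U-1}--\ref{itm:U-5} coincides with the paper's: a uniformly random partition with the prescribed part sizes, hypergeometric concentration, and a union bound over the polynomially many triples $(v,i,j)$. The gap is in \ref{itm:U-6} and \ref{itm:U-7}. A union bound over the $2^{n}$ bipartitions of $V(G')$ (or even over the $2^{|V(H)|}$ bipartitions of $V(H)$) cannot be closed by the concentration that is actually available. The statistic $\sum_{uv\in E(G'[A^*,B^*])}Z_uZ_v$ is a quadratic form in the $Z_v$, not a sum of independent indicators, so its lower tail is not $e^{-\Omega(\mu)}$ with $\mu=\Theta(n^2)$: by Janson's inequality (or bounded differences, with per-coordinate Lipschitz constants up to $n$) the exponent is of order $\mu^2/\overline{\Delta}$ with $\overline{\Delta}\approx p^3\sum_v d_v^2\le p^3 n\cdot e_{G'}(A^*,B^*)$, i.e.\ of order $\zeta^2 n/\mathrm{poly}(t)$ for the cuts you need. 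Since $\zeta\ll 1$ and $\zeta\ll 1/t$, this constant is far below $\ln 2$, so $2^{n}\cdot e^{-c(\zeta,t)n}\to\infty$ and the union bound fails. The same defect already sinks your first Chernoff step: the event that $d_{G'}(v,B^*\cap V(H))$ deviates from $d_{G'}(v,B^*)/2t$ by a constant factor has probability only $e^{-\Theta(d_{G'}(v,B^*)/t)}$, whose exponent constant is again much smaller than $\ln 2$, and there are $2^{n}$ choices of $B^*$ (moreover $d_{G'}(v,B^*)$ can be $o(n)$ or zero for many $v$). The ``beat $2^n$ by Chernoff'' template only works when the relevant means are $\Theta(n)$ with constants exceeding $\ln 2$, which is not the case here.

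The paper avoids any union bound over cuts. Given an arbitrary cut $\{X,Y\}$ of $V(H)$ with both sides large (the small side being handled by minimum degree via \ref{itm:U-5} and \ref{itm:Gprime-8}, as you do), it lifts the cut to a cut $\{X',Y'\}$ of $V(G')$ chosen so that $|X\cap U_i^{(3)}|/|U_i^{(3)}|=|X'\cap U_i|/|U_i|$ for every $i$, and then uses the $5\eps$-regularity of every pair $G'[U_i,U_j]$ (property \ref{itm:Gprime-6}) to compare $e_H(X\cap U_i,Y\cap U_j)$ with $\frac{1}{4t^2}e_{G'}(X'\cap U_i,Y'\cap U_j)$ up to an additive error $10\eps|U_i||U_j|$; summing over the $O(m^2)$ pairs and applying \ref{itm:Gprime-8} to $\{X',Y'\}$ gives \ref{itm:U-6}, and replacing \ref{itm:Gprime-8} by \ref{itm:Gprime-9} gives \ref{itm:U-7}. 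Crucially, $\eps$-regularity is a statement about \emph{all} pairs of large subsets simultaneously, so this transfer is deterministic given \ref{itm:U-2} and \ref{itm:U-5}; the randomness of the partition is only needed for the finitely many degree conditions in \ref{itm:U-5}. To repair your argument you would have to replace the exponential union bound by exactly this kind of deterministic transfer through the regular pairs.
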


		\begin{proof}[ of claim]
			For every $i \in [2m]$, let $\left\{U_i^{(1)}, \ldots, U_i^{(5)}\right\}$ be a random partition of $U_i$ into sets of sizes $\xi |U_i|$, $\xi |U_i|$, $(1/2t)|U_i|$, $(2/3)|U_i|$, $(1/3 - 1/2t - 2\xi)|U_i|$, respectively.  Then \ref{itm:U-1}--\ref{itm:U-4} hold by definition.
			Moreover, by \Cref{hypergeometricconcentration} and a union bound, \ref{itm:U-5} holds with probability at least $1 - o(1)$; fix an outcome such that it holds. 

			For proving \ref{itm:U-6}, let $\{X, Y\}$ be a partition of $V(H)$ with $|X| \le |Y|$.
			Notice that by \ref{itm:U-5} and \ref{itm:Gprime-8}, $H$ has minimum degree at least $(\zeta/7) \cdot |V(H)|$. Thus, if $|X| \le (\zeta/14) \cdot |V(H)|$, then every vertex in $X$ has at least $(\zeta/14) \cdot |V(H)|$ neighbours in $Y$, showing that $e_H(X,Y) \ge |X| \cdot (\zeta/14) \cdot |V(H)| \ge (\zeta / 14) \cdot |X| \cdot |Y|$, as required.

			So suppose that $|X| \ge (\zeta / 14) \cdot |V(H)|$. 
			Let $\{X', Y'\}$ be a partition of $V(G')$ such that, for every $i \in [2m]$, we have 
            \begin{equation}
      \label{choosingXiY'proportionally}
               \frac{|X \cap U_i^{(3)}|}{|U_i^{(3)}|} = \frac{|X' \cap U_i|}{|U_i|}.
            \end{equation}
      By \ref{itm:U-2} and \eqref{choosingXiY'proportionally}, it follows that $|X \cap U_i| = |X \cap U_i^{(3)}| = \frac{|X' \cap U_i|}{2t}$ for every $i \in [2m]$. Thus, $|X| = \frac{|X'|}{2t}$. Similarly, $|Y| = \frac{|Y'|}{2t}$.

 We claim that for any $i, j \in [2m]$, we have
			\begin{equation} \label{eqn:HtoG'} 
				e_H(X \cap U_i, Y \cap U_j) 
				\ge \frac{1}{4t^2} \cdot e_{G'}(X' \cap U_i, Y' \cap U_j) - 10\eps |U_i| |U_j|
			\end{equation}
            
			Indeed, if $|X' \cap U_i| \le 10t\eps |U_i|$ or $|Y' \cap U_j| \le 10t\eps |U_j|$, this holds trivially, as then the right-hand side of \eqref{eqn:HtoG'} is at most $(1/4t^2) \cdot 10t\eps |U_i| |U_j| - 10\eps |U_i| |U_j| < 0$. If, instead, $|X' \cap U_i| \ge 10t\eps |U_i|$ and $|Y' \cap U_j| \ge 10t\eps|U_j|$, then $|X \cap U_i| = |X \cap U_i^{(3)}| \ge 5\eps |U_i|$ and $|Y \cap U_j| = |Y \cap U_j^{(3)}| \ge 5\eps |U_j|$ by \eqref{choosingXiY'proportionally} and \ref{itm:U-2}, so by $5\eps$-regularity of $G'[U_i, U_j]$ (see \ref{itm:Gprime-6}), we have 
			\begin{equation*}
				\frac{e_H(X \cap U_i, Y \cap U_j)}{|X \cap U_i| \cdot |Y \cap U_j|} 
				\ge  \frac{e_{G'}(X' \cap U_i, Y' \cap U_j)}{|X' \cap U_i|\cdot| Y' \cap U_j|} - 10\eps.
			\end{equation*}
			This implies that
			\begin{align*}
				e_H(X \cap U_i, Y \cap U_j)
				& \ge e_{G'}(X' \cap U_i, Y' \cap U_j) \cdot \frac{|X \cap U_i|}{|X' \cap U_i|} \cdot \frac{|Y \cap U_j|}{|Y' \cap U_j|} - 10\eps \cdot|X \cap U_i| \cdot |Y \cap U_j| \\
				& \overset{\eqref{choosingXiY'proportionally}}{\ge} \frac{1}{4t^2} \cdot e_{G'}(X' \cap U_i, Y' \cap U_j) - 10\eps |U_i| |U_j|,
			\end{align*}
			as required for \eqref{eqn:HtoG'}.

			Hence, we obtain the following lower bound on the number of edges between $X$ and $Y$ in $H$.
			\begin{align*}
				e_H(X,Y) 
				& = \sum_{i,j \in [2m]}e_H(X \cap U_i, Y \cap U_j) \\
				& \overset{\eqref{eqn:HtoG'}}{\ge} \frac{1}{4t^2} \cdot \sum_{i,j \in [2m]} e_{G'}(X' \cap U_i, Y' \cap U_j) 
				- 10\eps \cdot \sum_{i,j \in [2m]} (|U_i| \cdot |U_j|) \\
				& = \frac{1}{4t^2} \cdot e_{G'}(X', Y') - 10 \eps \cdot |V(G')|^2 \\
				& \overset{\ref{itm:Gprime-8}}{\ge} \frac{1}{4t^2} \cdot \frac{\zeta}{12} \cdot |X'| \cdot |Y'| - 10\eps n^2 \\
				& = \frac{\zeta}{12} \cdot |X| \cdot |Y| - 10\eps n^2 \\
				& \ge \frac{\zeta}{20} \cdot |X| \cdot |Y|,
			\end{align*}
			using that $|X|, |Y| \ge (\zeta/14)|V(H)| \ge (\zeta/56t)n$ and that $\eps \ll 1/t, \zeta$ for the last inequality.
			This proves \ref{itm:U-6}. 

			Finally, it remains to prove \ref{itm:U-7}.
			Note that by \ref{itm:Gprime-4} and \ref{itm:U-2}, if $G$ is bipartite and balanced, then $H$ is also bipartite and balanced, as desired. Now suppose that $G$ is $\gamma$-far-from-bipartite. Then, by \ref{itm:Gprime-9}, one needs to remove at least $\gamma n^2 /4$ edges from $G'$ to make it bipartite. Let $\{X, Y\}$ be an arbitrary partition of $V(H)$. To prove \ref{itm:U-7}, we need to show that $e_H(X) + e_H(Y) \ge \frac{\gamma n^2}{256t^2}$.
			Define the sets $X', Y'$ as in \eqref{choosingXiY'proportionally}. Then, analogously to \eqref{eqn:HtoG'}, for every $i, j \in [2m]$, we have
			\begin{equation}
				\label{eqn:HtoG'forX'}
				e_H(X \cap U_i, X \cap U_j) 
				\ge \frac{1}{4t^2} \cdot e_{G'}(X' \cap U_i, X' \cap U_j) - 10\eps |U_i| |U_j|.
			\end{equation}
			It follows that
			\begin{align*}
				e_H(X) 
				& = \sum_{1 \le i < j \le 2m}e_H(X \cap U_i, X \cap U_j) \\
				& \overset{\eqref{eqn:HtoG'forX'}}{\ge} \frac{1}{4t^2} \cdot \sum_{1 \le i < j \le 2m} e_{G'}(X' \cap U_i, X' \cap U_j) 
				- 10\eps \cdot \sum_{1 \le i < j \le 2m} (|U_i| \cdot |U_j|) \\
				& \ge \frac{1}{4t^2} \cdot e_{G'}(X') - 10 \eps n^2. 
			\end{align*}
			Analogously, $e_H(Y) \ge \frac{1}{4t^2} \cdot  e_{G'}(Y') - 10\eps n^2$. 
			Notice that $e_{G'}(X) + e_{G'}(Y) \ge \gamma n^2/4$ by \ref{itm:Gprime-9} and the assumption that $G$ is not bipartite.
			Hence, altogether, we have
			\begin{align*}
				e_H(X) + e_H(Y) 
				\ge \frac{1}{4t^2}\big(e_{G'}(X') + e_{G'}(Y')\big) - 20\eps n^2 
				\ge \frac{\gamma}{16t^2} \cdot n^2 - 20\eps \cdot n^2
				\ge \frac{\gamma}{256t^2} \cdot n^2,
			\end{align*}
			using $\eps \ll \gamma, 1/t$ for the last inequality. This proves \ref{itm:U-7}.
		\end{proof}

		\subsection{Covering $V_0$ using a well-distributed collection of copies of $K_{t,t}$}

			In the rest of the proof, let $U^{(j)} \coloneqq \bigcup_{i \in [2m]}U_i^{(j)}$ for $j \in [5]$.
			In this subsection, we find a collection of vertex-disjoint copies of $K_{t,t}$ in $G\left[V_0 \cup U^{(1)} \cup U^{(2)}\right]$ that covers $V_0$ such that the number of uncovered vertices in each $U_i$ is divisible by $2t$. In the rest of the proof, using a slight abuse of notation, we denote the set of vertices contained in a given collection $\cK$ of copies of $K_{t,t}$ in $G$ by $V(\cK)$.

        First, we cover the vertices in $V_0$ using vertex-disjoint $K_{t,t}$'s in $G\left[V_0 \cup U^{(1)}\right]$ (ignoring the divisibility requirement).

			\begin{claim} \label{claim:K1}
				There is a collection $\cK_1$ of pairwise vertex-disjoint $K_{t,t}$'s in $G[V_0 \cup U^{(1)}]$ such that $V_0 \subseteq V(\cK_1)$ and each copy of $K_{t,t}$ in $\cK_1$ contains exactly one vertex of $V_0$.
			\end{claim}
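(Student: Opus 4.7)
The plan is to construct $\cK_1$ greedily, processing the vertices of $V_0$ one at a time. At each step I will pick an unprocessed $v \in V_0$, find a copy of $K_{t,t}$ in $G[V_0 \cup U^{(1)}]$ that uses $v$ and $2t-1$ fresh vertices of $U^{(1)}$, add it to $\cK_1$, and update the set $W \subseteq U^{(1)}$ of vertices already consumed. Since $|V_0| \le 6\eps n$ by \eqref{eqn:boundV0aftermove} and each step consumes at most $2t-1$ vertices, $|W| \le 12\eps t n$ throughout the procedure, which is negligible compared to $\xi |U_j|$ for any $j \in [2m]$ because $\eps \ll \xi, 1/t$. Hence $W$ may be treated as a small perturbation at every iteration.

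For the current $v \in V_0$, the first task is to locate an index $i \in [2m]$ where $v$ has many neighbours in $U_i$. Since $G$ has no $\zeta$-sparse cuts by \ref{itm:pack-expander-cuts}, we have $d_G(v) \ge \zeta n/2$; subtracting the at most $6\eps n$ neighbours lying in $V_0$ and averaging over $\sum_{i \in [2m]} |U_i| \le n$ yields some $i \in [2m]$ with $d_G(v, U_i) \ge (\zeta/3)|U_i|$. Property \ref{itm:U-5} then gives $|N_G(v) \cap U_i^{(1)}| \ge (\zeta\xi/4)|U_i|$. Let $i'$ be the partner of $i$ in the super-regular matching, so that $\{i,i'\} = \{2k-1,2k\}$ for some $k \in [m]$ and $G'[U_i, U_{i'}]$ is $(5\eps, \mu/5)$-super-regular by \ref{itm:Gprime-5}. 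Define $N_v \coloneqq (N_G(v) \cap U_i^{(1)}) \setminus W$ and $B \coloneqq U_{i'}^{(1)} \setminus W$; both sizes easily exceed the $5\eps |U_i|$ and $5\eps |U_{i'}|$ thresholds required to exploit regularity. (In the bipartite case, \ref{itm:Gprime-4} places $N_G(v)$ in the part opposite to $v$, which merely dictates the parity of $i$ and does not affect the argument.)

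By $5\eps$-regularity of $G'[U_i, U_{i'}]$, the density $d_{G'}(N_v, B)$ is at least $\mu/5 - 5\eps \ge \mu/6$, and the super-regularity minimum-degree bound combined with \ref{itm:U-5} ensures that every vertex of $N_v$ has at least $(\xi\mu/7)|U_{i'}|$ neighbours in $B$. Applying \Cref{lem:unbalanced-kst} with $X = N_v$ and $Y = B$ therefore yields a copy of $K_{t,t}$ in $G[N_v, B]$. Take $V_2$ to be its side of size $t$ inside $N_v$ (so that $V_2 \subseteq N_G(v)$), and let $V_1$ consist of $v$ together with any $t-1$ of the $t$ vertices of the other side in $B$; then $V_1 \cup V_2$ spans a copy of $K_{t,t}$ in $G[V_0 \cup U^{(1)}]$ containing exactly one vertex of $V_0$, namely $v$. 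I add this copy to $\cK_1$ and iterate. I expect no substantive obstacle here; the only point requiring care, and the one that leverages the preparatory work of the previous subsection, is the verification that after restricting the super-regular pair $G'[U_i, U_{i'}]$ to the neighbourhood of $v$ on one side and removing the cumulative set $W$ from both sides, the resulting bipartite graph still has constant density, which follows immediately from $5\eps$-regularity together with $|W| \ll \xi n$.
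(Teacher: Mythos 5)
There is a genuine gap, and it is quantitative. Your whole argument rests on the assertion that $|W| \le 12\eps t n$ is ``negligible compared to $\xi|U_j|$ for any $j \in [2m]$''. This is false, and in fact the inequality goes the wrong way: by \ref{itm:reg-3} each cluster satisfies $|V_j| \le \eps n$, so $|U_j| \le \eps n/2$ and $\xi|U_j| \le \xi\eps n/2$, which is \emph{much smaller} than $12\eps t n$. (Equivalently: $|V_0|$ can be as large as $\eps n$ while a single cluster has size only about $n/m$ with $m$ as large as $M(\eps)$, so the exceptional set can dwarf any individual cluster.) Consequently your greedy process breaks down: after each vertex $v\in V_0$ is routed to a single super-regular pair $(U_i,U_{i'})$, that pair must supply $2t-1$ vertices of $U_i^{(1)}\cup U_{i'}^{(1)}$, a set of total size $2\xi|U_i|\le \xi\eps n$. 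If the vertices of $V_0$ all concentrate their neighbourhoods on the same pair (nothing rules this out), the sets $N_v$ and $B$ become empty long before $V_0$ is exhausted, and the regularity/super-regularity thresholds $5\eps|U_i|$ you invoke cannot be met. The same problem already invalidates the claim $|N_v|\ge(\zeta\xi/4)|U_i|-|W|>0$.

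The paper's proof sidesteps this entirely by never localising to a single pair: it takes a maximal collection $\cK_1$, sets $U'=U^{(1)}\setminus V(\cK_1)$, and shows that \emph{every} vertex $u$ of $G$ (not just $v$) satisfies $d_G(u,U')\ge(\xi\zeta/8)n$ --- a bound proportional to $n$ with constants depending only on $\xi,\zeta$, against which the loss $|V(\cK_1)|\le 12\eps tn$ really is negligible since $\eps\ll\xi,\zeta,1/t$. It then applies \Cref{lem:unbalanced-kst} to $G[W,Z]$ with $W\subseteq N(v,U')$ and $Z=U'\setminus W$, both of which spread across many clusters; no regularity or super-regularity is needed, only the uniform degree bound into $U'$. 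If you want to repair your argument you must either work globally in $U^{(1)}$ as the paper does, or add a load-balancing mechanism distributing the vertices of $V_0$ among the pairs in proportion to the pairs' capacities --- the latter is possible in principle but requires an argument you have not supplied.
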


			\begin{proof}
				Let $\cK_1$ be a maximal collection of pairwise vertex-disjoint copies of $K_{t,t}$ in $G[V_0 \cup U^{(1)}]$, where each copy of $K_{t,t}$ contains exactly one vertex from $V_0$. 
				We will show that $V_0 \subseteq V(\cK_1)$, proving the claim.
				Suppose not, and let $v \in V_0 \setminus V(\cK_1)$.
				Write $U \coloneqq U^{(1)}$ and $U' \coloneqq U \setminus V(\cK_1)$.

				We claim that every vertex $u$ in $G$ satisfies $d_G(u, U') \ge (\xi \zeta / 8) n$.
				Indeed, given a vertex $u$ in $G$, let $I$ be the set of indices $i \in [2m]$ such that $d_G(u, U_i) \ge \eps |U_i|$.
				Then 
				\begin{align*}
					d_G(u, U)
					= \sum_{i \in [2m]}d_G\big(u, U_i^{(1)}\big) 
					& \overset{\ref{itm:U-5}}{\ge} \sum_{i \in I}(1 - \eps) \cdot \frac{\big|U_i^{(1)}\big|}{|U_i|} \cdot d_G(u, U_i) \\
					& \ge (1 - \eps) \cdot \xi \cdot \left(d_G(u) - d_G(u, V_0) - \sum_{i \in [2m] \setminus I} d_G(u, U_i) \right) \\
					& \ge (1 - \eps) \cdot \xi \cdot \left(\frac{\zeta n}{2} - |V_0| - \sum_{i \in [2m] \setminus I} \eps |U_i| \right) \\[.5em]
					& \overset{\eqref{eqn:boundV0aftermove}}{\ge} (1 - \eps) \cdot \xi \cdot \left (\frac{\zeta}{2} - 7\eps \right ) \cdot n 
					\ge \frac{\xi \zeta}{4} \cdot n.
				\end{align*}
				For the third inequality we used that $G$ has minimum degree at least $\zeta (n-1) \ge \frac{\zeta}{2} n$, which follows from the fact that there are no $\zeta$-sparse cuts in $G$.
				Since $|V(\cK_1)| \le 2t |V_0| \le 12\eps t n$, every vertex $u$ in $G$ satisfies $d_G(u, U') \ge d_G(u, U) - |V(\cK_1)| \ge (\frac{\xi \zeta}{4} - 12\eps t)n \ge (\xi \zeta / 8)n$, as desired.

				Fix a subset $W \subseteq N(v, U')$ of size $(\xi \zeta / 16)n$ and write $Z \coloneqq U' \setminus W$. Since every vertex $u$ in $G$ satisfies $d_G(u, U') \ge (\xi \zeta / 8) n$, every vertex in $W$ has at least $(\xi \zeta / 16) n$ neighbours in $Z$.
                By \Cref{lem:unbalanced-kst}, the graph $G[W, Z]$ has a $K_{t,t}$, say, with vertex set $A \cup B$ where $A \subseteq W$ and $B \subseteq Z$. Let $B'$ be a subset of $B$ of size $t-1$. Then $G[A, B' \cup \{v\}]$ is a copy of $K_{t,t}$ in $G[V_0 \cup U^{(1)}]$, which is vertex-disjoint from $V(\mathcal{K}_1)$ and contains exactly one vertex from $V_0$, contradicting the maximality of $\mathcal{K}_1$.
			\end{proof}
            
\subsection{Constructing copies of $K_{t,t}$ to ensure divisibility}
\label{subsec:Kttdivisibility}

			Next, we find a small number of vertex-disjoint $K_{t,t}$'s in $G[U^{(2)}]$ that will allow us to ensure that the number of uncovered vertices in every $U_i$ is divisible by $t$. Recall that the \emph{support} of a function $f: A \to B$, denoted $\supp(f)$, is the set of elements $a$ in $A$ with $f(a) \neq 0$.

			\begin{claim}\label{claim:K2}
				Let $f : [2m] \to \{0, \ldots, t-1\}$ be a function such that $\sum_{i \in [2m]}f(i)$ is divisible by $t$, and if $G$ is bipartite, then $\sum_{i \in [m]} f(2i)$ is also divisible by $t$.
				Then there is a collection $\cK_2$ of pairwise vertex-disjoint $K_{t,t}$'s in $G[U^{(2)}]$ such that $|U_i \cap V(\cK_2)| \le 3t \cdot |\supp(f)|$ and $|U_i \cap V(\cK_2)| \equiv f(i) \pmod{t}$ for $i \in [2m]$.
			\end{claim}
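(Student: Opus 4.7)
My plan is to realize the required residues one ``group of $t$'' at a time. Define the multiset $M$ on $[2m]$ in which each $i$ appears with multiplicity $f(i)$, so $|M| = \sum_i f(i)$ is divisible by $t$, and partition $M$ into $\ell \coloneqq |M|/t \le |\supp(f)|$ sub-multisets $M_1, \ldots, M_\ell$, each of size exactly $t$. In the bipartite case, the hypothesis that $\sum_{i \in [m]} f(2i)$ is also divisible by $t$ — together with the divisibility of $\sum_i f(i)$, which forces $\sum_{i \in [m]} f(2i-1)$ to be divisible by $t$ as well — lets me further arrange that each $M_k$ lies entirely within odd indices or entirely within even indices of $[2m]$.

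For each $k \in [\ell]$ I then inductively find a copy $K_k$ of $K_{t,t}$ in $G[U^{(2)}]$, vertex-disjoint from $V(K_1 \cup \cdots \cup K_{k-1})$, whose $A$-side consists of $M_k(i)$ vertices in $U_i^{(2)}$ for each $i \in \supp(M_k)$ (totalling $t$ vertices), and whose $B$-side consists of $t$ vertices all in a single cluster $U_{j_k^*}^{(2)}$, chosen of the opposite parity to $\supp(M_k)$ in the bipartite case. The accounting is then immediate:
\[
|U_i \cap V(\cK_2)| \;=\; \sum_k M_k(i) \;+\; t \cdot |\{k : j_k^* = i\}| \;\equiv\; f(i) \pmod{t},
\]
since $\sum_k M_k(i) = f(i)$, and the bound $|U_i \cap V(\cK_2)| \le (t-1) + t\ell \le 3t|\supp(f)|$ follows from $\ell \le |\supp(f)|$.

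The main obstacle is proving that each $K_k$ can actually be found, which is where the density properties of $G$ come in. Since $G$ has no $\zeta$-sparse cuts (item \ref{itm:pack-expander-cuts}), its minimum degree is $\Omega(\zeta n)$, and via \ref{itm:U-5} this transfers into a $\Omega(\xi\zeta n)$ minimum degree for $G[U^{(2)}]$; an averaging argument (over clusters of the appropriate parity, possibly combined with the super-regular pair structure of \ref{itm:Gprime-5} to handle degenerate cases) then yields a cluster $U_{j_k^*}$ for which $G[U_{j_k^*}^{(2)}, U_i^{(2)}]$ has density at least some positive absolute constant $\rho_0 = \rho_0(\zeta, \mu, \xi, t)$ for every $i \in \supp(M_k)$. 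Given such a $j_k^*$, a K\H{o}v\'ari--S\'os--Tur\'an-type counting argument (in the spirit of \Cref{lem:unbalanced-kst}) applied to each bipartite graph $G[U_{j_k^*}^{(2)}, U_i^{(2)}]$, followed by a union bound over the constantly many $i \in \supp(M_k)$, produces a $t$-subset $B \subseteq U_{j_k^*}^{(2)}$ — avoiding the $O(t|\supp(f)|)$ previously used vertices — whose common neighborhood contains at least $M_k(i)$ unused vertices in each $U_i^{(2)}$; selecting $M_k(i)$ such vertices for each $i \in \supp(M_k)$ completes the copy $K_k$.
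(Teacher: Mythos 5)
Your overall bookkeeping (grouping the multiset $M$ into blocks of size $t$, putting the entire $B$-side of each $K_{t,t}$ into a single cluster so that it contributes $0 \bmod t$, and the parity splitting in the bipartite case) is sound, and it is a genuinely different organisation from the paper's, which instead inducts on $|\supp(f)|$ and transports the residue $f(a)$ from $U_a$ to $U_b$ along a walk of even length in the reduced graph $\Gref$. However, there is a genuine gap at the step you yourself flag as the main obstacle: the existence of a cluster $U_{j_k^*}$ such that $G[U_{j_k^*}^{(2)}, U_i^{(2)}]$ has density at least $\rho_0$ for \emph{every} $i \in \supp(M_k)$ simultaneously. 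The hypotheses do not supply such a cluster. The absence of $\zeta$-sparse cuts only guarantees that each cluster $U_i$ sends edges to a positive proportion (roughly $cm$ out of $2m$) of the other clusters, i.e.\ that $\Gref$ is connected with linear minimum degree; when $c$ is small, two clusters — let alone the up to $t$ clusters in $\supp(M_k)$ — need not have any common neighbour in $\Gref$, and no averaging over clusters can manufacture one. This failure already occurs in the simplest nontrivial instance $\supp(f) = \{a,b\}$ with $f(a)+f(b)=t$, where your single block $M_1$ forces you to find one cluster dense to both $U_a$ and $U_b$. The super-regular structure of \ref{itm:Gprime-5} does not help either, since it only concerns the matched pairs $(U_{2i-1},U_{2i})$.

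This is precisely the difficulty the paper's proof is designed to avoid: it only ever builds $K_{t,t}$'s inside single edges of $\Gref$ (which are $\eps$-regular pairs of density $>\mu/5$, so \Cref{lem:unbalanced-kst} applies), and it uses the connectivity of $\Gref$ (from \ref{itm:Gprime-8}) plus an odd cycle in the non-bipartite case (from \ref{itm:Gprime-9}) to find an even-length walk $U_{i_0},\ldots,U_{i_{2\ell}}$ from $U_a$ to $U_b$; placing a $K_{t,t}$ on each consecutive pair of edges of the walk, with $f(a)$ vertices in $U_{i_{2j-2}}$, $t$ in $U_{i_{2j-1}}$ and $t-f(a)$ in $U_{i_{2j}}$, shifts the residue one step at a time without ever needing a common dense neighbour. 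To repair your argument you would need to replace the single ``hub'' cluster $U_{j_k^*}$ by some such walk-based (or tree-based) gadget connecting the clusters of $\supp(M_k)$ through $\Gref$, at which point you would essentially be reproducing the paper's construction. The remaining ingredients of your proposal (the K\H{o}v\'ari--S\'os--Tur\'an-style extraction of a $K_{t,t}$ from a dense regular pair while avoiding the $O(t|\supp(f)|)$ previously used vertices, and the final congruence and size accounting) are fine.
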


			\begin{proof}[ of claim]
				We prove the claim by induction on the size of $\supp(f)$.
				If $|\supp(f)| = 0$, then we can take $\cK_2 = \emptyset$. So, suppose that $\supp(f) \neq \emptyset$, and let $a \in [2m]$ satisfy $f(a) \neq 0$. By the assumption that $\sum_{i \in [2m]}f(i)$ is divisible by $t$, there is $b \in [2m] \setminus \{a\}$ such that $f(b) \neq 0$, and, if $G$ is bipartite, we may take $b$ to have the same parity as $a$. 

				Define $f' : [2m] \to \{0, \ldots, t-1\}$ as follows.
				\begin{equation*}
					f'(i) = 
					\left\{
						\begin{array}{ll}
							f(i) & i \neq a,b, \\
							0 & i = a, \\
							f(a) + f(b) & i = b.
						\end{array}
					\right.
				\end{equation*}
				Notice that $\sum_{i \in [2m]}f'(i)$ is divisible by $t$, $\sum_{i \in [m]}f'(2i)$ is also divisible by $t$ if $G$ is bipartite and $|\supp(f')| < |\supp(f)|$. Thus, by the induction hypothesis, there is a collection $\cK'$ of pairwise vertex-disjoint $K_{t,t}$'s in $G[U^{(2)}]$ such that $|U_i \cap V(\cK')| \le 3t \cdot |\supp(f')|$ and $|U_i \cap V(\cK')| \equiv f'(i) \!\pmod{t}$ for $i \in [2m]$.

				We claim that there is a walk $W$ of even length from $U_a$ to $U_b$ in $\Gref$ that does not visit a vertex more than three times.
				Indeed, notice that $\Gref$ is connected since $G'$ has no $\zeta/12$-sparse cuts (see \ref{itm:Gprime-8}).
				If $G$ is bipartite, any path in $\Gref$ from $U_a$ to $U_b$ suffices: such a path exists by connectivity, and because $a$ and $b$ have the same parity, its length is necessarily even.
				If $G$ is not bipartite, then \ref{itm:Gprime-9} (and the fact that $\mu \ll \gamma$) implies that $\Gref$ is also not bipartite. Let $C$ be an odd cycle in $\Gref$, and let $v \in V(C)$. By connectivity of $\Gref$, there are paths $P_a$ and $P_b$ from $a$ to $v$ and from $b$ to $v$. If the lengths of $P_a$ and $P_b$ have the same parity, then $a P_a v P_b b$ is a walk of even length visiting each vertex at most twice. Otherwise, $a P_a v C v P_b b$ is a walk of even length that visits each vertex at most three times, as desired.

				Let $W$ be a walk in $\Gref$ as in the previous paragraph, and write $W = U_{i_0},\ldots, U_{i_{2\ell}}$, where $U_{i_0} = U_{a}$ and $U_{i_{2\ell}} = U_{b}$.  Since $W$ visits each vertex in $\Gref$ at most three times and $|U_i \cap V(\cK')| \le 3t \cdot |\supp(f')| \le 6tm$ for $i \in [2m]$, we can apply \Cref{lem:unbalanced-kst} to iteratively choose pairwise vertex-disjoint $K_{t,t}$'s in $U^{(2)} \setminus V(\cK')$, denoted $K_1, \ldots, K_{\ell}$, such that for every $j \in [\ell]$, $K_j$ consists of $t$ vertices in $U^{(2)}_{i_{2j-1}}$, $f(a)$ vertices in $U^{(2)}_{i_{2j-2}}$ and $t-f(a)$ vertices in $U^{(2)}_{i_{2j}}$.
				Write $\cK_2 \coloneqq \cK' \cup \{K_1, \ldots, K_{\ell}\}$.
				It is easy to check that $\cK_2$ satisfies the requirements of the induction hypothesis since $|U_i \cap V(\cK_2)| \le 3t \cdot |\supp(f')| + 3t \le 3t |\supp(f)|$ and $|U_i \cap V(\cK_2)| \equiv f(i) \!\pmod{t}$ for $i \in [2m]$. This completes the induction step and proves the claim.
			\end{proof}

			Let $\cK_1$ be a collection of copies of $K_{t,t}$ as guaranteed by \Cref{claim:K1}. 
			For $i \in [2m]$, let $f(i) \coloneqq - |U_i \cap V(\cK_1)| \pmod{t} = |U_i^{(1)} \cap V(\cK_1)| \pmod{t}$. 
			Then 
			\begin{equation}
            \label{eqsumfi}
				-\sum_{i \in [2m]}f(i) 
				\equiv \sum_{i \in [2m]} |U_i \cap V(\cK_1)|
				\equiv |V(\cK_1)| - |V_0|
				\equiv 0 \pmod{2t},
			\end{equation}
			where the second equality uses that $V(G) = V_0 \cup U_1 \cup \cdots \cup U_{2m}$ and $V_0 \subseteq V(\mathcal{K}_1)$, and the third follows from the fact that both $|V(\mathcal{K}_1)|$ and $|V_0|$ are divisible by $2t$, with the divisibility of $|V_0|$ given by \ref{itm:Gprime-4}.

			If $G$ is bipartite (and balanced), then, denoting by $\{X, Y\}$ the bipartition of $G$ with $X - V_0 = \bigcup_{i \in [m]}U_{2i-1}$ and $Y - V_0 = \bigcup_{i \in [m]}U_{2i}$,
			\begin{equation}
            \label{eq:sumf2i}
				-\sum_{i \in [m]}f(2i)
				\equiv \sum_{i \in [m]} |U_{2i} \cap V(\cK_1)|
				\equiv |(Y - V_0) \cap V(\cK_1)|
				\equiv \frac{|V(\cK_1)| - |V_0|}{2}
				\equiv 0 \pmod{t}.
			\end{equation}
			The first equality follows by the definition of $f$, and the second equality follows from the fact that $Y - V_0 = \bigcup_{i \in [m]}U_{2i}$. For the third equality, first recall that $|U_1| = \ldots = |U_{2m}|$ by \ref{itm:Gprime-4}, so $|X - V_0| = |Y - V_0|$. This implies that $|X \cap V_0| = |Y \cap V_0|$ since $G$ is bipartite and balanced.
            Combining this with the fact that $V_0 \subseteq V(\cK_1)$, and that every copy of $K_{t,t}$ in $\cK_1$ contains exactly one vertex of $V_0$, we get $|(X - V_0) \cap V(\cK_1)| = |(Y - V_0) \cap V(\cK_1)|$ from which the third equality follows easily.
			The final equality follows from $|V_0|$ and $|V(\cK_1)|$ being divisible by $2t$.
			Using \eqref{eqsumfi}, \eqref{eq:sumf2i} and \Cref{claim:K2}, we can find a collection $\cK_2$ of pairwise vertex-disjoint copies of $K_{t,t}$ in $G[U^{(2)}]$ satisfying $|U_i^{(2)} \cap V(\cK_2)| \equiv f(i) \equiv -|U_i^{(1)} \cap V(\cK_1)| \pmod{t}$ for $i \in [2m]$, which implies that $|U_i \cap V(\cK_1 \cup \cK_2)|$ is divisible by $t$, as desired.

		\subsection{Covering the remaining vertices of $G$ with vertex-disjoint copies of $K_{t,t}$}
        \label{subsec:templatematching}
        
			In this subsection, our aim is to cover the remaining vertices of $G$ with vertex-disjoint copies of $K_{t,t}$. To this end, we will find a collection $\cK_3$ of copies of $K_{t,t}$ such that $V(\cK_3) \subseteq V(G') \setminus V(\cK_1 \cup \cK_2)$ and such that the sets
			$U'_i \coloneqq U_i \setminus V(\cK_1 \cup \cK_2 \cup \cK_3)$, $i \in [2m]$, have the same size. This allows us to apply the blow-up lemma to find perfect $K_{t,t}$-packings in $G'[U'_{2i-1}, U'_{2i}]$ for every $i \in [m]$, thereby covering all of the remaining vertices, as desired. 

			Let us now carry out the above strategy. For $i \in [2m]$, let $U_i'^{(3)}$ be an arbitrary subset of $U_i^{(3)}$ such that
			\begin{equation} \label{eqn:Ui}
				\big|U_i'^{(3)}\big| = \big|U_i^{(3)}\big| - \frac{\big|U_i \cap V(\cK_1 \cup \cK_2) \big|}{t}.
			\end{equation}
			(Notice that the right-hand-side of \eqref{eqn:Ui} is an integer due to the choice of $\cK_2$.) Recall that $U^{(3)} = \bigcup_{i \in [2m]}U_i^{(3)}$. 
			Write $U'^{(3)} \coloneqq \bigcup_{i \in [2m]}U'^{(3)}_i$.
			In the rest of this section, let $H \coloneqq G'\left[U^{(3)}\right]$, $H' \coloneqq G'\left[U'^{(3)}\right]$ and let $d' \coloneqq d/2t$.

			\begin{claim}
            \label{pminH'}
				There is a perfect matching in $H'$.
			\end{claim}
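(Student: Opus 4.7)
The plan is to obtain the perfect matching in $H'$ by first finding a Hamilton cycle in $H'$ via \Cref{lem:hamilton-cycle} and then taking alternate edges. The bulk of the work lies in verifying that $H'$ satisfies the hypotheses of \Cref{lem:hamilton-cycle}, together with showing that $|V(H')|$ is even.

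First I would check parity. Since $m$ is even and each $|U_i|/(2t)$ is an integer (by \ref{itm:Gprime-4}), $|U^{(3)}| = 2m \cdot |U_1|/(2t)$ is even. The number of deleted vertices equals $\sum_i |U_i \cap V(\cK_1 \cup \cK_2)|/t = (|V(\cK_1 \cup \cK_2)| - |V_0|)/t$, which is divisible by $2$ because $|V(\cK_1)|$, $|V(\cK_2)|$, and $|V_0|$ are all divisible by $2t$. Hence $|V(H')|$ is even. Next, using $|V(\cK_1)| \le 2t|V_0|$ (Claim~\ref{claim:K1}), $|V(\cK_2)| \le O_t(m^2)$ (\Cref{claim:K2}), and \eqref{eqn:boundV0aftermove}, we get $|V(H) \setminus V(H')| \le O(\eps n)$, which is negligible compared to $|V(H)| \ge (1-O(\eps))n/(2t)$.

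I would then transfer each structural property from $H$ to $H'$. For expansion, starting from \ref{itm:U-6}, small cuts in $H'$ are handled via the minimum degree bound implied by no $\zeta/20$-sparse cuts in $H$, and larger cuts are handled by extending any partition of $V(H')$ to a partition of $V(H)$ and invoking \ref{itm:U-6}, losing only $O(\eps n^2)$ edges; this yields that $H'$ has no $\zeta/40$-sparse cuts. For the bipartite/far-from-bipartite dichotomy, I use \ref{itm:U-7}: if $G$ is $\gamma$-far-from-bipartite, removing $O(\eps n)$ vertices from $H$ reduces the bipartite-distance by at most $O(\eps n^2) \ll \frac{\gamma}{256 t^2} n^2$, so $H'$ remains, say, $\gamma/(512 t^2)$-far-from-bipartite. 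If $G$ is bipartite and balanced, then (as noted when defining $H$) $H$ is bipartite and balanced with bipartition $\{\bigcup_i U^{(3)}_{2i-1}, \bigcup_i U^{(3)}_{2i}\}$; to see that $H'$ remains balanced, observe that each $K_{t,t}$ in $\cK_1 \cup \cK_2$ sits inside the bipartite graph $G$, so it contains exactly $t$ vertices in each side of $G$, and combined with $|X \cap V_0| = |Y \cap V_0|$ (from balance of $G$ and equality of $|U_1|, \ldots, |U_{2m}|$), the number of deletions from each side of $H$'s bipartition is equal.

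For the degree condition, set $d^{\ast} \coloneqq \Delta(H')$. Using \ref{itm:U-5} and \ref{itm:Gprime-7}, one obtains $d_H(v) = d_{G'}(v)/(2t) \pm O(\eps n)$ for every $v \in V(H)$, so $d^{\ast} \le d/(2t) + O(\eps n)$; conversely, summing these estimates and using that $G'$ has average degree at least $d - 6\eta n$ shows that the average degree of $H$, and hence of $H'$, is at least $d^{\ast} - O(\eta n + \eps n)$. Since $|V(H')| \ge n/(4t)$, this becomes $d^{\ast} - \eta^{\ast} |V(H')|$ for a small $\eta^{\ast}$ with $\eta^{\ast} \ll \zeta/40 \ll \gamma/(512 t^2)$. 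Applying \Cref{lem:hamilton-cycle} to $H'$ with $d^{\ast}$ in place of $d$ yields a Hamilton cycle, and taking alternate edges gives the required perfect matching.

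The main obstacle I expect is the careful parameter tracking to ensure the hierarchy $1/|V(H')| \ll \eta^{\ast} \ll \zeta/40 \ll \gamma/(512 t^2) \ll 1/t$ required by \Cref{lem:hamilton-cycle}, and verifying the balance of $H'$ in the bipartite case (which hinges on the fact that $\cK_1 \cup \cK_2$ lives inside the bipartite graph $G$ and respects its bipartition).
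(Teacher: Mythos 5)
Your proposal is correct and follows essentially the same route as the paper: verify the hypotheses of \Cref{lem:hamilton-cycle} and extract the perfect matching by taking alternate edges of the resulting Hamilton cycle (the balance argument in the bipartite case and the divisibility of $|V(H')|$ are handled just as in the paper). The only difference is presentational: the paper applies \Cref{lem:hamilton-cycle} to $H$ with $W = V(H)\setminus V(H')$, exploiting the lemma's built-in tolerance for removing a small balancing set, whereas you transfer the expansion, degree, and bipartiteness properties to $H'$ and apply the lemma to $H'$ directly with $W=\emptyset$ — both are valid.
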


			\begin{proof}[ of claim]
				We claim that the following properties hold.
				\begin{enumerate}[label = \rm(\alph*)]
					\item \label{itm:HH-1}
						$H$ has maximum degree at most $d'(1 + \eta)$.
					\item \label{itm:HH-2}
						$H$ has average degree at least $d'(1 + \eta) - 32\eta t \cdot |V(H)|$.
					\item \label{itm:HH-3}
						$H$ has no $\zeta/20$-sparse cuts.
					\item \label{itm:HH-4}
						$H$ is either bipartite and balanced, or one needs to  remove at least $\frac{\gamma}{256t^2}n^2$ edges from $H$ to make it bipartite.
					\item \label{itm:HH-5}
						$|V(H) \setminus V(H')| \le 8t\xi \cdot |V(H)|$.
					\item \label{itm:HH-6}
						If $G$ is bipartite and balanced, then $H'$ is also bipartite and balanced.
				\end{enumerate}
				Indeed, by \ref{itm:U-2} and \ref{itm:U-5}, any vertex $v$ in $H$ has degree at most $ \sum_{i \in [2m]} \left((1+\eps) \frac{d_{G'}(v, U_i)}{2t}\right) + \eps n \le d'(1+\eps) + \eps n \le d'(1 + \eta)$, proving \ref{itm:HH-1}. 
				For proving \ref{itm:HH-2}, we have the following sequence of inequalities.
				\begin{align*}
					2e(H) 
					= \sum_{v \in V(H)} \sum_{i \in [2m]}d_{G'}(v, U^{(3)}_i)
					& \overset{\ref{itm:U-5}}{\ge} (1 - \eps)\sum_{v \in V(H)} \sum_{i \in [2m]} d_{G'}(v, U_i) \cdot \frac{|U_i^{(3)}|}{|U_i|} \\
					& \overset{\ref{itm:U-2}}{=} \frac{1-\eps}{2t} \sum_{v \in V(H)} d_{G'}(v) \\
					& = \frac{1-\eps}{2t} \left(\sum_{v \in V(G')} d_{G'}(v) - \sum_{v \in V(G') \setminus V(H)} d_{G'}(v) \right) \\[.5em]
					& \overset{\ref{itm:Gprime-7}}{\ge} \frac{1-\eps}{2t} \cdot \Big((d - 6\eta n) \cdot |V(G')| - d \cdot \big(|V(G')| - |V(H)|\big)\Big) \\
					& = \frac{1-\eps}{2t} \cdot \Big(d \cdot |V(H)| - 6\eta n \cdot |V(G')|\Big) \\
					& \ge d' \cdot |V(H)| - \eps d' \cdot |V(H)| - 6 \eta n \cdot |V(H)| \\
					& = d'(1 + \eta) \cdot |V(H)| - (\eps d' + 6\eta n + \eta d') \cdot |V(H)| \\
					& \ge d'(1 + \eta) \cdot |V(H)| - 8\eta n \cdot |V(H)| \\
					& \ge d'(1 + \eta) \cdot |V(H)| - 32 \eta t \cdot |V(H)|^2.
				\end{align*}
				It follows that $H$ has average degree at least $d'(1 + \eta) - 32 \eta t \cdot |V(H)|$, as required for \ref{itm:HH-2}.
				Notice that \ref{itm:HH-3} is the same as \ref{itm:U-6} and \ref{itm:HH-4} is the same as \ref{itm:U-7}.
				For proving \ref{itm:HH-5}, note that $|V(H) \setminus V(H')| \le |U^{(1)}| + |U^{(2)}| \le 2\xi n \le 8t \xi \cdot |V(H)|$.
				Finally, for proving \ref{itm:HH-6}, suppose that $G$ is bipartite and balanced, and denote its bipartition by $\{X, Y\}$. Recall that, by \ref{itm:Gprime-4}, $|V_0 \cap X| = |V_0 \cap Y|$ (since $|X| = |Y|$). By the choice of $\cK_1$ and $\cK_2$ (see \Cref{claim:K1} and \Cref{claim:K2}), we have $|V(G') \cap (V(\cK_1 \cup \cK_2) \cap X)| = |V(G') \cap (V(\cK_1 \cup \cK_2) \cap Y)|$, and thus
				\begin{align*}
					|V(H') \cap X| 
					& = \sum_{i \in [m]}\left(|U_{2i-1}^{(3)}| - \frac{|U_{2i-1} \cap V(\cK_1 \cup \cK_2)|}{t}\right) \\[.5em]
					& \overset{\ref{itm:U-2}}{=} \left (\sum_{i \in [m]}\frac{|U_{2i-1}|}{2t} \right ) - \frac{|V(G') \cap (V(\cK_1 \cup \cK_2) \cap X)|}{t} \\[.5em]
					& \overset{\ref{itm:Gprime-4} }{=} \left( \sum_{i \in [m]}\frac{|U_{2i}|}{2t} \right )- \frac{|V(G') \cap (V(\cK_1 \cup \cK_2) \cap Y)|}{t} \\[.5em]
					& = \sum_{i \in [m]}\left(|U_{2i}^{(3)}| - \frac{|U_{2i} \cap V(\cK_1 \cup \cK_2|)}{t}\right)
					= |V(H') \cap Y|, 
				\end{align*}
				proving that $H'$ is balanced, as required for \ref{itm:HH-6}.

				Thus, by applying \Cref{lem:hamilton-cycle} with $H$, $d'(1 + \eta)$, $c/2t$, $32 \eta t$, $8t\xi$, $\gamma/(256 t^2)$, $\zeta/20$,  $V(H) \setminus V(H')$, $V(H)$ playing the roles of $G$, $d$, $c$, $\eta$, $\xi$, $\gamma$, $\zeta$, $W$, $n$ respectively, we find that there is a Hamilton cycle in $H'$, which contains a perfect matching in $H'$ (since $|V(H')|$ is even), thus proving the claim.
			\end{proof}
			Let $\cM'$ be a perfect matching in $H'$ (guaranteed by \Cref{pminH'}).
			For $i,j \in [2m]$, denote by $f(i,j)$ the number of edges in $\cM'[U_i'^{(3)}, U_j'^{(3)}]$.

			\begin{claim} \label{claim:Ktt-f}
				There is a collection $\cK_3$ of pairwise vertex-disjoint copies of $K_{t,t}$ in $G'\left[U^{(4)}\right]$ such that for every $i,j \in [2m]$, $\cK_3$ consists of $f(i,j)$ copies of $K_{t,t}$ in $G'[U_i,U_j]$ (and there are no other $K_{t,t}$'s).
			\end{claim}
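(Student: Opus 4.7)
The plan is to build $\cK_3$ greedily. Enumerate the pairs $(i,j)$ with $1 \le i < j \le 2m$ in an arbitrary order, and for each such pair iteratively add $f(i,j)$ pairwise vertex-disjoint copies of $K_{t,t}$ inside $G'[U_i^{(4)}, U_j^{(4)}]$, keeping everything vertex-disjoint from all previously chosen copies. First observe that $f(i,i) = 0$ for every $i \in [2m]$: each $U_i$ is contained in a single cluster $V_{\ell}$ of the regularity partition, so by \ref{itm:reg-5} the graph $G'$ has no edges inside $U_i$, and hence $\cM'$ has no edge with both ends in $U_i'^{(3)}$. So only pairs with $i \neq j$ need to be processed.

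The key budget estimate is the following. Summing $f(i,j)$ over $j \in [2m]$ counts the edges of the perfect matching $\cM'$ incident to $U_i'^{(3)}$, so
\begin{equation*}
    \sum_{j \in [2m]} f(i,j) \;=\; \big|U_i'^{(3)}\big| \;\le\; \big|U_i^{(3)}\big| \;=\; \frac{|U_i|}{2t},
\end{equation*}
using \ref{itm:U-2}. Consequently, the total number of vertices of $U_i$ ever used by $\cK_3$ is at most $t \cdot |U_i|/(2t) = |U_i|/2$.

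Now for the greedy step, suppose we are about to select a new $K_{t,t}$ between $U_i^{(4)}$ and $U_j^{(4)}$, and for each $\ell \in [2m]$ let $S_\ell \subseteq U_\ell^{(4)}$ denote the set of vertices of $U_\ell^{(4)}$ already used. The budget bound gives $|S_\ell| \le |U_\ell|/2$, so by \ref{itm:U-3},
\begin{equation*}
    \big|U_\ell^{(4)} \setminus S_\ell\big| \;\ge\; \tfrac{2}{3}|U_\ell| - \tfrac{1}{2}|U_\ell| \;=\; \tfrac{1}{6}|U_\ell|.
\end{equation*}
Since $f(i,j) > 0$ guarantees an edge of $\cM'$ between $U_i^{(3)}$ and $U_j^{(3)}$, the pair $G'[U_i, U_j]$ contains an edge, hence by \ref{itm:Gprime-6} it is $5\eps$-regular with density more than $\mu/5$. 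Applied to the subsets $U_i^{(4)} \setminus S_i$ and $U_j^{(4)} \setminus S_j$, which are each of size at least $|U_\ell|/6 \ge 5\eps|U_\ell|$ (as $\eps \ll 1$), $5\eps$-regularity yields that at most $5\eps|U_i^{(4)} \setminus S_i|$ vertices of $U_i^{(4)} \setminus S_i$ have fewer than $(\mu/5 - 5\eps)|U_j^{(4)} \setminus S_j| \ge (\mu/40) \cdot 2n$ neighbours in $U_j^{(4)} \setminus S_j$. Hence there are at least $|U_i|/7$ vertices in $U_i^{(4)} \setminus S_i$ each with at least $(\mu/40) \cdot 2n$ neighbours in $U_j^{(4)} \setminus S_j$. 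Since $|U_i| \ge n/(2M)$ with $M = M(\eps)$ fixed, this number of high-degree vertices exceeds $2t(e/(\mu/40))^t$ for $n$ large enough, so \Cref{lem:unbalanced-kst} (applied with $\mu/40$ in place of $\delta$ and $2n$ in place of $n$) furnishes a $K_{t,t}$ in $G'[U_i^{(4)} \setminus S_i, U_j^{(4)} \setminus S_j]$, which we add to the collection.

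I do not anticipate a serious obstacle: the argument is a standard greedy extraction, and the only things to verify are the vertex budget $|S_\ell| \le |U_\ell|/2$ (which is immediate from the edge count in $\cM'$) and that regularity passes to the large subsets $U_i^{(4)} \setminus S_i$ and $U_j^{(4)} \setminus S_j$ (which holds because their sizes remain a constant fraction of $|U_i|, |U_j|$ throughout). Iterating the greedy step $f(i,j)$ times for each pair $(i,j)$, and over all such pairs, produces the required collection $\cK_3$.
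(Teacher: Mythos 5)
Your proposal is correct and follows essentially the same route as the paper: the paper takes a maximal collection subject to the cap $f(i,j)$ and derives a contradiction, while you build it greedily, but the substance is identical — the budget bound $t\sum_j f(i,j) = t|U_i'^{(3)}| \le |U_i|/2$ giving $|U_\ell^{(4)}\setminus S_\ell| \ge |U_\ell|/6$, followed by $5\eps$-regularity of $G'[U_i,U_j]$ from \ref{itm:Gprime-6} and the K\H{o}v\'ari--S\'os--Tur\'an lemma. One small normalization slip: the degree bound should be $(\mu/5-5\eps)|U_j^{(4)}\setminus S_j| = \Omega(\mu |U_j|)$, not $(\mu/40)\cdot 2n$ (note $|U_j| \le \eps n$), so \Cref{lem:unbalanced-kst} should be applied with the vertex count $|U_i|+|U_j|$ (and $\delta$ a constant multiple of $\mu$) in place of $n$; since $|U_i| = \Omega(n/M)$ with $M$ constant, this changes nothing.
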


			\begin{proof}
				Let $\cK_3$ be a maximal collection of pairwise vertex-disjoint $K_{t,t}$'s in $G'\left[U^{(4)}\right]$, such that for every $i,j \in [2m]$, $\cK_3$ consists of at most $f(i,j)$ copies of $K_{t,t}$ in $G'[U_i^{(4)},U_j^{(4)}]$ (and no other copies of $K_{t,t}$).  
				Suppose towards a contradiction that there exist $i,j \in [2m]$ such that the number of copies of $K_{t,t}$ in $\cK_3$ that are in $G'[U_i,U_j]$ is strictly smaller than $f(i,j)$ (implying that $f(i,j) > 0$).
				Write $U_{\ell}'' \coloneqq U_{\ell}^{(4)} \setminus V(\cK_3)$ for every $\ell \in [2m]$.
				
				We claim that $|U_{\ell}''| \ge |U_{\ell}|/6$ for every $\ell \in [2m]$. Indeed, note that for every $\ell \in [2m]$ we have $|U_{\ell}^{(4)} \cap V(\cK_3)| \le t \cdot \sum_{ j \in [2m]} f(\ell,j) = t \cdot |U_{\ell}'^{(3)}| \le t \cdot |U_{\ell}^{(3)}| = |U_{\ell}|/2$, where in the last equality we used \ref{itm:U-2}. Thus, by \ref{itm:U-3}, for every $\ell \in [2m]$, $|U_{\ell}''| = |U_{\ell}^{(4)}| - |U_{\ell}^{(4)} \cap V(\cK_3)|  \ge (2/3)|U_{\ell}| - (1/2)|U_{\ell}| = |U_{\ell}|/6$, as claimed. 

				Now recall that, by \ref{itm:Gprime-6}, for all $i, j \in [2m]$, the graph $G'[U_i, U_j]$ is $5\eps$-regular, and it has density more than $\mu/5$ if it has at least one edge. Hence, we have $e_{G'}(U_i'', U_j'') > (\mu/5 - 5\eps) \cdot |U_i''| \cdot |U_j''|$, so by the K\H{o}v\'ari--S\'os--Tur\'an theorem (\Cref{lem:unbalanced-kst}), there is a $K_{t,t}$ in $G'[U_i'', U_j'']$, contradicting the maximality of $\cK_3$ and proving the claim.
			\end{proof}

			Let $\cK_3$ be a collection of copies of $K_{t,t}$ as guaranteed by \Cref{claim:Ktt-f}.
			For $i \in [2m]$, write $U_i' \coloneqq U_i \setminus V(\cK_1 \cup \cK_2 \cup \cK_3)$.
			For $i \in [2m]$, by the choice of $U_i'^{(3)}$ as in \eqref{eqn:Ui} and the fact that $|U_i \cap V(\mathcal K_3)| = t \cdot \sum_{j \in [2m]}f(i,j) = t \cdot |U_i'^{(3)}|$, we have
			\begin{align*}
				|U_i'| 
				& = |U_i \setminus V(\cK_1 \cup \cK_2 \cup \cK_3)| 
				= |U_i| - \big|U_i \cap V(\cK_1 \cup \cK_2)\big| - \big|U_i \cap V(\cK_3)\big|\\
				& = |U_i| - \big|U_i \cap V(\cK_1 \cup \cK_2)\big| - t \cdot \left(\big|U_i^{(3)}\big| - \frac{\big|U_i \cap V(\cK_1 \cup \cK_2)\big|}{t}\right)
				= |U_i| - t\big|U_i^{(3)}\big| = \frac{|U_i|}{2}.
			\end{align*}
			Recall that the sets $U_i$ for $i \in [2m]$ have the same size, which is divisible by $2t$ by \ref{itm:Gprime-4}. Hence, the sets $U'_i$ for $i \in [2m]$ have the same size, which is divisible by $t$. 
			Moreover, by \ref{itm:U-4}, \ref{itm:U-5}, \ref{itm:Gprime-5} and the fact that $U_i^{(5)}$ is a subset of $U_i'$ for each $i \in [2m]$, we have that $G'[U_{2i-1}', U_{2i}']$ is $(10\eps, \mu/50)$-super-regular for $i \in [m]$. 
			Therefore, it follows from the blow-up lemma (\Cref{lem:blowup}) that $G'[U_{2i-1}', U_{2i}']$ has a perfect $K_{t,t}$-packing (since $\eps \ll \mu, 1/t$). These perfect $K_{t,t}$-packings of $G'[U_{2i-1}', U_{2i}']$ for $i \in [m]$, along with $\cK_1$, $\cK_2$ and $\cK_3$, form a perfect $K_{t,t}$-packing in $G$. This completes the proof of Lemma~\ref{lem:Ktt-packing-expander}.
		\end{proof}

\section{$F$-packings in dense regular graphs} \label{sec:proof}

	In this section, we combine \Cref{lem:balancing} and \Cref{cor:Ktt-packing-expander} to prove Theorem~\ref{mainthm:packingsubgraphs}, which asserts that for every bipartite graph $F$ and every constant $0 < c \le 1$, there is a constant $C > 0$ such that every $d$-regular graph $G$ of order $n$, with $d \ge cn$, has an $F$-packing that covers all but at most $C$ vertices of $G$. Denote the sizes of the two parts of $F$ by $a$ and $b$, and write $t \coloneqq a + b$.  Then, it is easy to see that there are two vertex-disjoint copies of $F$ covering all the vertices of the complete bipartite graph $K_{t,t}$. Hence, Theorem~\ref{mainthm:packingsubgraphs} follows immediately from the following. 

	\begin{theorem}
	\label{mainthm:Ktt}
	Let $0 < c \le 1$, and let $t \ge 2$ be an integer. Then there exists a constant $C = C(t,c)$ such that every $d$-regular graph $G$ of order $n$, where $d \ge cn$, has a $K_{t,t}$-packing that covers all but at most $C$ vertices of $G$.
\end{theorem}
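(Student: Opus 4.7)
The plan is to combine the two key lemmas, \Cref{lem:balancing} and \Cref{cor:Ktt-packing-expander}, in a direct way. First, I would apply \Cref{lem:balancing} to $G$ to obtain pairwise disjoint sets $Z_1, \ldots, Z_r, L \subseteq V(G)$ (with $r \le \lceil 1/c \rceil$), constants $\eta \ll \zeta \ll \gamma \ll c, 1/t$, and a spanning subgraph $G' \subseteq G$ such that the leftover $V(G) \setminus (Z_1 \cup \ldots \cup Z_r \cup L)$ has size bounded by the absolute constant $C_0 \coloneqq 64 r^2 \cdot (t/c) \cdot (e/(8\zeta))^t$, the set $L$ already admits a perfect $K_{t,t}$-packing, and each $G'[Z_i]$ is an expander which is either balanced bipartite or far from bipartite.

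Next, for each $i \in [r]$, I would apply \Cref{cor:Ktt-packing-expander} to $G'[Z_i]$ to produce a $K_{t,t}$-packing in $G'[Z_i] \subseteq G[Z_i]$ covering all but at most $2t-1$ vertices. Taking the union of these packings with the perfect $K_{t,t}$-packing of $G[L]$ from \ref{lem:balance:perfectpackL} yields a $K_{t,t}$-packing of $G$ whose uncovered vertices lie either in the leftover set or among the at most $2t-1$ exceptional vertices of each $Z_i$. This gives a total of at most $C_0 + r(2t-1)$ uncovered vertices, which is a constant depending only on $t$ and $c$.

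The only thing to verify carefully is that the parameters guaranteed by \Cref{lem:balancing} actually match the hypotheses of \Cref{cor:Ktt-packing-expander} when applied to each $G'[Z_i]$. For this, one observes the following. Since $G'[Z_i]$ has minimum degree at least (essentially) $\zeta |Z_i|/2$ by the no-sparse-cut condition \ref{lem:balance:nosparsecut}, the size $|Z_i|$ is linear in $n$, say $|Z_i| \ge \zeta n/4$. Consequently, writing $n_i \coloneqq |Z_i|$, one has $d \ge cn \ge c' n_i$ for a suitable $c' = c'(c, \zeta, t) > 0$, the maximum degree of $G'[Z_i]$ is trivially at most $d$ (since $G' \subseteq G$ and $G$ is $d$-regular), the average degree condition \ref{lem:balance:highaveragedegree} gives the required bound $d - \eta n_i$, and \ref{lem:balance:nosparsecut} supplies the no-$\zeta$-sparse-cut condition. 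Finally, \ref{lem:balance:dichotomy} provides the bipartite-and-balanced or far-from-bipartite dichotomy: if $G'[Z_i]$ is not balanced bipartite, then at least $\gamma n^2 \ge \gamma n_i^2$ edges must be removed to make it bipartite, so $G'[Z_i]$ is $\gamma$-far-from-bipartite as an $n_i$-vertex graph.

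The main (minor) obstacle is just this bookkeeping: one has to check that the hierarchy $1/n_0 \ll \eta \ll \zeta \ll \gamma \ll c', 1/t$ required by \Cref{cor:Ktt-packing-expander} is inherited from that given by \Cref{lem:balancing}, and that $n_i \ge n_0$ for $n$ sufficiently large. Both are immediate from $n_i \ge \zeta n/4$ and the explicit constant dependencies above, so no genuine difficulty arises; the bulk of the work for \Cref{mainthm:Ktt} has been concentrated in the proofs of the two key lemmas.
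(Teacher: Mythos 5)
Your proposal is correct and follows essentially the same route as the paper: apply \Cref{lem:balancing} to obtain the balanced expanders, the set $L$ with its perfect packing, and the constant-size leftover, then apply \Cref{cor:Ktt-packing-expander} to each $G'[Z_i]$ and sum the at most $2t-1$ uncovered vertices per expander. Your bookkeeping of the hypotheses (including $|Z_i|$ being linear in $n$ so that the hierarchy and the $n_i \ge n_0$ requirement are inherited) matches the paper's argument; the only cosmetic difference is that the paper uses $d \ge cn \ge c|Z_i|$ directly rather than introducing a modified constant $c'$, and explicitly disposes of the case $n < n_0$ by enlarging $C$.
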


\begin{proof}[ of Theorem~\ref{mainthm:Ktt}]
	Let $n_0 \in \mathbb{N}$ be chosen sufficiently large depending on $c$ and $t$, that is, $1/n_0 \ll c$. If $n < n_0$, then the conclusion holds trivially for a suitably large constant $C = C(t, c)$. So, assume $n \ge n_0$.
Then by applying Lemma~\ref{lem:balancing} to $G$, we obtain positive numbers $\eta, \gamma, \zeta, r$ with $r \le \lceil 1/c \rceil$, a subgraph $G' \subseteq G$ with $V(G') = V(G)$ and pairwise disjoint sets 
		$Z_1, \ldots, Z_{r}, L \subseteq V(G)$ satisfying properties \ref{lem:balance:leftover}--\ref{lem:balance:perfectpackL}
		such that $$1/n_0 \ll \eta \ll \zeta \ll \gamma  \ll c, 1/t.$$
		For all $i \in [r]$, by \ref{lem:balance:highaveragedegree}, $G'[Z_i]$ has average degree at least $d - \eta|Z_i|$, by \ref{lem:balance:nosparsecut},  $G'[Z_i]$ has no $\zeta$-sparse cuts
		and, by \ref{lem:balance:dichotomy}, either $G'[Z_i]$ is bipartite and balanced, or one needs to remove at least $\gamma n^2 \ge \gamma |Z_i|^2$ edges to make it bipartite, and $d \ge cn \ge c|Z_i|$. Thus, for every $i \in [r]$, we can apply \Cref{cor:Ktt-packing-expander} with $G'[Z_i]$ playing the role of $G$, to obtain a $K_{t,t}$-packing of $G'[Z_i]$ covering all but at most $2t-1$ vertices in $Z_i$. Moreover, $G[L]$ has a perfect $K_{t,t}$-packing by \ref{lem:balance:perfectpackL}, and $|V(G) \setminus (Z_1 \cup \ldots \cup Z_{r} \cup L)| \le 64r^2 \cdot \frac{t}{c} \cdot \left(\frac{e}{8\zeta}\right)^t$ by \ref{lem:balance:leftover}. This yields a $K_{t,t}$-packing of $G$ covering all but at most the following number of vertices.
		\begin{equation*}
			r \cdot (2t-1) + 64 \cdot r^2 \cdot \frac{t}{c} \cdot \left(\frac{e}{8\zeta}\right)^t 
			\le 66 \cdot r^2 \cdot \frac{t}{c} \cdot \left(\frac{e}{8\zeta}\right)^t 
			\le 66\left(\ceil{\frac{1}{c}}\right)^2 \cdot \frac{t}{c} \cdot \left(\frac{e}{8\zeta}\right)^t. 
		\end{equation*}
		This completes the proof of Theorem~\ref{mainthm:Ktt}.
	\end{proof}

\section{Packing subdivisions in dense regular graphs} \label{sec:Kt-subdivisions-dense}

	In this section, we prove \Cref{thm:Kt-subdivision-dense}. We start by giving a detailed sketch of its proof here. Let $F$ be a graph, and let $0 \le c \le 1$. Let $G$ be a $d$-regular graph of order $n$, where $d \ge cn$. We start, as before, by partitioning the vertex set of $G$ into sets $Z_1, \ldots, Z_r$ such that the subgraphs $G[Z_i]$, $i \in [r]$ induced by these sets are expanders with few edges between them (using Lemma~\ref{lem:expander-decomposition}). Let $\{X_i, Y_i\}$ be a bipartition of $G$ maximising the number of edges in $G[X_i, Y_i]$. Let $G'$ be the graph obtained from $G$ by removing all edges within $X_i$ and $Y_i$ for all $i \in [r]$ such that $G[Z_i]$ is close to being bipartite. This ensures that for all $i \in [r]$, $G'[Z_i]$ is either bipartite with the bipartition $\{X_i, Y_i\}$ (with roughly the same average degree and minimum degree as $G[Z_i]$) or it is far from being bipartite. 

	Next, we find short paths $P_1, \ldots, P_r$ (where each $P_i$ has both of its ends in $Z_i$) whose removal `balances' the expanders that are close to being bipartite.
	More precisely, for every $i \in [r]$, if $G[Z_i]$ is close to being bipartite, then, writing $Q \coloneqq P_1 \cup \ldots \cup P_r$, we have $|X_i \setminus V(Q)| = |Y_i \setminus V(Q)|$, and $P_i$ has one end in $X_i$ and the other end in $Y_i$. The key tool for building these paths is \Cref{lem:path-forest} due to Gruslys and Letzter~\cite{gruslys2021cycle}, which produces a small linear forest $H$ that balances the expanders that are close to being bipartite, while also ensuring that each expander contains zero or two leaves of $H$. To obtain the desired paths $P_1, \ldots, P_r$, we iteratively merge pairs of components of $H$ whose leaves lie in the same expander and, if necessary, we define some paths $P_i$ as arbitrary edges within $G'[Z_i]$.

	Our next step is to find two small, vertex-disjoint $F$-subdivisions within each expander $G'[Z_i]$, denoted $F_i$ and $F_i'$, which are disjoint from $Q$ and whose union is balanced; that is, if $G[Z_i]$ is close to being bipartite, then $|X_i \cap V(F_i \cup F_i')| = |Y_i \cap V(F_i \cup F_i')|$. We construct $F_i$ and $F_i'$ greedily: we begin by selecting $|V(F)|$ vertices from $X_i$ to serve as the branch vertices of $F_i$, and $|V(F)|$ vertices from $Y_i$ to serve as the branch vertices of $F_i'$. We then iteratively connect pairs of branch vertices (corresponding to the edges of $F$) one pair at a time, to complete each subdivision. 
    
    Finally, for each $i \in [r]$, we absorb the path $P_i$ and the remaining uncovered vertices in $Z_i$ into the subdivision $F_i$. To do this, we replace an arbitrary edge $x_i y_i$ in $F_i$ with a path that starts at $x_i$, connects to one end of $P_i$, traverses $P_i$, and then continues through all the remaining uncovered vertices in $Z_i$ before returning to $y_i$.
	
	In the last three steps, we rely on the fact that the expanders $G'[Z_i]$ are \emph{robustly connected} via short paths; that is, any two vertices can be joined by a short path that avoids any given small set of forbidden vertices (see \Cref{claim:findshortpathinexpander}). In the final step, we also use the \emph{robust Hamiltonicity} property of the expanders $G'[Z_i]$, meaning that for any small set of forbidden vertices $W$ (which is balancing if $G'[Z_i]$ is bipartite), and any two vertices $x, y \in Z_i \setminus W$ (lying in different parts of $G'[Z_i]$ if it is bipartite), there exists a Hamilton path in $G'[Z_i] \setminus W$ with ends $x$ and $y$.

   The proof of \Cref{thm:Kt-subdivision-dense} is given in \Cref{subsec:formalproofthm1.7}, after establishing two preparatory lemmas in the following two subsections.

	\subsection{Expanders are robustly connected via short paths}

	In the following lemma, we show that our expanders are `robustly connected' via short paths, a property we will use several times in the proof.

		\begin{lemma}
\label{claim:findshortpathinexpander}
			Let $n$ be a positive integer, and let $\beta, \xi, \zeta, \delta \in (0,1)$ satisfy $1/n \ll \xi, \beta \ll \zeta \ll \delta$. Let $H$ be a graph on at most $n$ vertices with minimum degree at least $\delta n$ and no $\zeta$-sparse cuts. If $H$ is $\beta$-almost-bipartite, let $H'$ be a largest bipartite subgraph of $H$; otherwise, set $H' := H$. Then, $H'$ has no $\zeta/2$-sparse cuts. Moreover, if $W \subseteq V(H)$ is a subset of size at most $\xi n$, then $H'' \coloneqq H' \setminus W$ satisfies the following properties.

\begin{enumerate}[label = \rm(\roman*)]
	\item \label{itm:S-Z-cuts}
		 $H''$ has no $\zeta/2$-sparse cuts.
	\item \label{itm:S-Z-path}
		For every pair $p, p' \in V(H'')$, there exists a $(p, p')$-path in $H''$ of length at most $15/\delta$.
\end{enumerate}
		\end{lemma}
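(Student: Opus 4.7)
My plan is to handle the three conclusions of the lemma separately, reducing each to the previous one.

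For the first claim that $H'$ has no $\zeta/2$-sparse cuts: if $H$ is not $\beta$-almost-bipartite, then $H' = H$ and the conclusion is immediate. Otherwise $H'$ is the bipartite subgraph corresponding to a maximum cut $\{X_0, Y_0\}$ of $H$, so the standard max-cut argument forces every vertex to have at least half its $H$-degree across the cut, giving $\delta(H') \ge \delta n/2$, while the $\beta$-almost-bipartite assumption forces $e(H) - e(H') \le \beta n^2$ (since this equals the number of edges of $H$ within the two parts of the bipartition minimizing that quantity). For a cut $\{A, B\}$ of $H'$ with $|A| \le |B|$, I would split into two cases. When $|A| \le \delta n/10$, the minimum-degree bound yields
\[
e_{H'}(A,B) \ge |A|\bigl(\delta n/2 - |A|\bigr) \ge (2\delta/5)\cdot |A| \cdot n \ge (\zeta/2)|A||B|,
\]
using $\zeta \ll \delta$ and $|B| \le n$. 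Otherwise $|A||B| \ge (\delta n/10)^2 \gg \beta n^2/\zeta$ (using $\beta \ll \zeta, \delta$), so
\[
e_{H'}(A,B) \ge e_H(A,B) - \beta n^2 \ge \zeta|A||B| - \beta n^2 \ge (\zeta/2)|A||B|.
\]

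For part (i), since $|W| \le \xi n$ and $\xi \ll \delta$, we have $\delta(H'') \ge \delta(H') - \xi n \ge \delta n/3$. Any cut $\{A, B\}$ of $H''$ extends to the cut $\{A, V(H)\setminus A\}$ of $H'$, for which the first step gives at least $(\zeta/2)|A|(|B|+|W|)$ edges in $H'$; passing to $H''$ loses at most $|A||W| \le |A|\xi n$ edges. A nearly identical case analysis — small $|A|$ handled via the minimum degree of $H''$, and large $|A|$ handled via the $(\zeta/2)$-expansion of $H'$ with the $\xi n$ error absorbed by $\xi \ll \zeta, \delta$ — then yields $e_{H''}(A,B) \ge (\zeta/2)|A||B|$, establishing (i).

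For part (ii), note that (i) already implies that $H''$ is connected (a disconnected component would give a cut with zero edges, which is certainly $(\zeta/2)$-sparse), and recall $\delta(H'') \ge \delta n/3$. I would then invoke the classical diameter-versus-minimum-degree bound of Erd\H{o}s, Pach, Pollack and Tuza, which states that any connected $n'$-vertex graph with minimum degree $m$ has diameter at most $3n'/(m+1) - 1$. Applied to $H''$ this yields diameter at most $3|V(H'')|/(\delta n/3 + 1) - 1 < 9/\delta \le 15/\delta$, producing the required $(p,p')$-path. The main technical obstacle will be to carefully track the constant losses at each reduction step — from no $\zeta$-sparse cuts in $H$ to no $\zeta/2$-sparse cuts in both $H'$ and $H''$, and from $\delta(H) \ge \delta n$ to $\delta(H'') \ge \delta n/3$ — and to verify that they are consistently absorbed by the hierarchy $\xi, \beta \ll \zeta \ll \delta$; by contrast, the short-path conclusion itself reduces cleanly to the classical result once part (i) is in hand.
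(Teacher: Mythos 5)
Your treatment of the sparse-cut statements follows the same route as the paper: the max-cut observation giving $\delta(H') \ge \delta n/2$ and $e(H)-e(H') \le \beta n^2$, followed by the two-case analysis (small side handled by minimum degree, large side by the expansion of $H$ with the $\beta n^2$ and $\xi n^2$ error terms absorbed using $|A||B| = \Omega(\delta^2 n^2)$). One small bookkeeping point: in part~(i) you propose to deduce the bound for $H''$ from the $(\zeta/2)$-expansion of $H'$, but that chain only yields $e_{H''}(A,B) \ge (\zeta/2)|A|(|B|+|W|) - |A||W| = (\zeta/2)|A||B| - (1-\zeta/2)|A||W|$, which falls just short of $(\zeta/2)|A||B|$. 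In the large-$|A|$ case you should instead go back to the full $\zeta$-expansion of $H$ (as the paper does), writing $e_{H''}(A,B) \ge e_H(A, B\cup W) - \beta n^2 - \xi n^2 \ge \zeta|A||B| - (\beta+\xi)n^2 \ge (\zeta/2)|A||B|$; alternatively, prove a slightly stronger intermediate constant for $H'$. This is a fixable constant issue, not a conceptual one. The genuine difference is in part~(ii): the paper gives a self-contained argument (on a shortest path, take every third vertex; the degree sum of these vertices exceeds $n$, so some vertex has two neighbours among them, yielding a shortcut and contradicting minimality), whereas you invoke the Erd\H{o}s--Pach--Pollack--Tuza diameter bound $3n'/(m+1)-1$, which with minimum degree $\Omega(\delta n)$ gives diameter $O(1/\delta)$ well within $15/\delta$. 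Both are valid; the paper's version is essentially an inline proof of (a weak form of) that classical bound, so your citation buys brevity at the cost of self-containment.
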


\begin{proof}
			Notice that $H'$ has minimum degree at least $\delta n / 2$, and $H''$ has minimum degree at least $\delta n / 4$. First, we show that $H'$ has no $\zeta/2$-sparse cuts. Consider a partition $\{X, Y\}$ of $V(H')$ with $|X| \le |Y|$. If $|X| \le \delta n/4$, then, since $H'$ has minimum degree at least $\delta n / 2$, we have $e_{H'}(X,Y) \ge \frac{\delta n}{4} \cdot |X| \ge \frac{\delta}{4} \cdot |X| \cdot |Y| \ge \frac{\zeta}{2} \cdot |X| \cdot |Y|$. So we may assume that $|X|, |Y| \ge \delta n/4$. Then, since $H$ has no $\zeta$-sparse-cuts, we have $e_{H'}(X, Y) \ge e_H(X, Y) - \beta n^2 \ge \zeta \cdot |X| \cdot |Y| - \beta n^2 \ge \zeta/2 \cdot |X| \cdot |Y|$, showing that $H'$ has no $\zeta/2$-sparse cuts, as desired. To prove \ref{itm:S-Z-cuts}, we use a very similar argument. Consider a partition $\{X, Y\}$ of $V(H'')$ with $|X| \le |Y|$. If $|X| \le \delta n/8$, then, since $H''$ has minimum degree at least $\delta n / 4$, we have $e_{H''}(X,Y) \ge \frac{\delta n}{8} \cdot |X| \ge \frac{\delta}{8} \cdot |X| \cdot |Y| \ge \frac{\zeta}{2} \cdot |X| \cdot |Y|$. So we may assume that $|X|, |Y| \ge \delta n/8$. But then, since $H$ has no $\zeta$-sparse-cuts, 
			\begin{align*}
				e_{H''}(X,Y) = e_{H'}(X, Y)
				& = e_{H'}(X, Y \cup W) - e_{H'}(X, W)\\
				& \ge (e_H(X, Y \cup W) - \beta n^2) - \xi n^2 \\
				& \ge \zeta \cdot |X| \cdot |Y| - \beta n^2 - \xi n^2 \\
				& \ge \zeta /2 \cdot |X| \cdot |Y|.
			\end{align*}
			since $\xi, \beta \ll \delta, \zeta$. This shows that $H''$ has no $\zeta/2$-sparse cuts, proving \ref{itm:S-Z-cuts}.

			Now we prove \ref{itm:S-Z-path}. Let $p, p' \in V(H'')$. By \ref{itm:S-Z-cuts}, $H''$ is connected, so it contains a $(p, p')$-path. Let $P$ be a shortest $(p, p')$-path in $H''$. We claim that $P$ has length at most $15/\delta$. Indeed, suppose otherwise, and let $P$ be the path $u_0 \ldots u_t$, where $u_0 = p$ and $u_t = p'$ such that $t > 15/\delta$. Set $U \coloneqq \{u_0, u_3, \ldots, u_{3q} \}$, where $q \coloneqq \lceil 4/\delta \rceil$ (this is well defined because $t > \frac{15}{\delta} \ge 3 \cdot (\frac{4}{\delta} + 1) \ge 3 \cdot \ceil{\frac{4}{\delta}} = 3q$).
			Since $H''$ has minimum degree at least $\delta n / 4$, the number of edges of $H''$ incident to vertices in $U$ is at least $(q+1) \cdot \delta n / 4> n$, implying that there is a vertex $v$ in $H''$ with at least two neighbours in $U$, say $u_{3i}$ and $u_{3j}$, where $i < j$. Denote by $W$ the walk $u_0 u_1 \ldots u_{3i} v u_{3j} \ldots u_t$. Note that $W$ is a $(p,p')$-walk in $H''$. Let $P'$ be a $(p,p')$-path contained in $W$. Since $W$ is shorter than $P$, it follows that $P'$ is also shorter than $P$, a contradiction to the minimality of $P$. This proves \ref{itm:S-Z-path}, completing the proof of \Cref{claim:findshortpathinexpander}.
		\end{proof}

	\subsection{Balancing the expanders}

		As further preparation for the proof of \Cref{mainthm:packingsubgraphs}, we state the following corollary of \Cref{lem:path-forest}, which will allow us to balance the expanders using a suitable collection of paths. This is somewhat similar to \Cref{lem:balancing}, where we also balance the expanders, but using a collection of vertex-disjoint $K_{t,t}$'s instead of paths. 

		\begin{lemma} \label{cor:path-forest}
			Let $\eta, \beta, \xi, \gamma, \zeta, \delta, c \in (0,1)$ and $n \in \bN$ satisfy $1/n \ll \eta \ll \beta \ll \xi \ll \gamma \ll \zeta \ll \delta \ll c$.
			Let $G$ be a $d$-regular graph on $n$ vertices, where $d \ge cn$. 
			Suppose that $\{Z_1, \ldots, Z_r\}$ is a partition of $V(G)$ satisfying properties \ref{itm:expander-decomp-1}--\ref{itm:expander-decomp-4} in Lemma~\ref{lem:expander-decomposition}, where $r \le \ceil{1/c}$. For $i \in [r]$ such that $G[Z_i]$ is $\beta$-almost-bipartite, let $\{X_i, Y_i\}$ be a partition of $Z_i$ maximising $e_G(X_i, Y_i)$. Then there are vertex-disjoint paths $P_1, \ldots, P_r$ in $G$ satisfying the following properties, where $Q \coloneqq \bigcup_{i \in [r]}P_i$.
			\begin{enumerate}[label = \rm(P'\arabic*)]
				\item \label{itm:Hforest-1-var}
					For every $i \in [r]$, we have $2 \le |V(P_i)| \le  \xi n$.
				\item \label{itm:Hforest-3-var}
					For every $i \in [r]$, both leaves of $P_i$ are in $Z_i$. Moreover, for each $i \in [r]$ such that $G[Z_i]$ is $\beta$-almost-bipartite, one of the leaves of $P_i$ is in $X_i$ and the other leaf is in $Y_i$.
				\item \label{itm:Hforest-4-var}
					For each $i \in [r]$ such that $G[Z_i]$ is $\beta$-almost-bipartite, $|X_i \setminus V(Q)| = |Y_i \setminus V(Q)|$.
			\end{enumerate}
		\end{lemma}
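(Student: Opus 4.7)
My plan is to bootstrap directly from \Cref{lem:path-forest}: I first apply it with parameter $\xi/2$ in place of $\xi$ to obtain a linear forest $H$ satisfying \ref{itm:Hforest-1}--\ref{itm:Hforest-4}, and then surgically modify $H$ into the desired system of paths $P_1, \ldots, P_r$. Throughout the modification I maintain the invariants that (a) each $Z_i$ contains either zero or exactly two leaves of the current linear forest $Q$, with the two leaves lying one in $X_i$ and one in $Y_i$ whenever $G[Z_i]$ is $\beta$-almost-bipartite; and (b) the balance condition \ref{itm:Hforest-4} continues to hold with $Q$ in place of $H$. Both invariants hold initially by \ref{itm:Hforest-3} and \ref{itm:Hforest-4}.

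The main surgery step is as follows: while there exist $i \in [r]$ and two distinct components $C_1, C_2$ of $Q$ each with a leaf in $Z_i$, I merge them. Let $u_1 \in V(C_1) \cap Z_i$ and $u_2 \in V(C_2) \cap Z_i$ be these leaves, so that when $G[Z_i]$ is $\beta$-almost-bipartite we may assume $u_1 \in X_i$ and $u_2 \in Y_i$ by invariant (a). Applying \Cref{claim:findshortpathinexpander} to $G[Z_i]$ --- which has minimum degree at least $\delta n$ by \ref{itm:expander-decomp-2}, no $\zeta$-sparse cuts by \ref{itm:expander-decomp-3}, and satisfies the bipartiteness dichotomy by \ref{itm:expander-decomp-4} --- with forbidden set $W \coloneqq V(Q) \setminus \{u_1, u_2\}$, I obtain a $(u_1, u_2)$-path $\pi$ in $G[Z_i]$ of length at most $15/\delta$ avoiding the rest of $Q$. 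In the almost-bipartite case the lemma is applied to the largest bipartite subgraph of $G[Z_i]$, so that $\pi$ necessarily alternates between $X_i$ and $Y_i$. I then replace $C_1, C_2$ in $Q$ by the concatenation $C_1 \cup \pi \cup C_2$, a single new component whose two leaves are precisely the non-$Z_i$ leaves of $C_1$ and $C_2$. Invariant (a) is preserved since $Z_i$ now contains zero leaves of $Q$ while no other $Z_j$'s leaf count changes, and invariant (b) is preserved because an odd-length $(X_i, Y_i)$-alternating path has equally many interior vertices on each side. Each merge strictly decreases the number of components, so the loop terminates in at most $r \le \ceil{1/c}$ iterations.

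When the loop halts, every component of $Q$ has both leaves in a single $Z_i$, and no $Z_i$ contains leaves from distinct components. For each $i \in [r]$, if some component of $Q$ has its leaves in $Z_i$ I let $P_i$ be that component; otherwise I let $P_i$ be any edge of $G[Z_i]$ disjoint from $V(Q)$, choosing it to lie in $G[X_i, Y_i]$ whenever $G[Z_i]$ is $\beta$-almost-bipartite. Such an edge exists because $G[Z_i]$ has minimum degree at least $\delta n$, the bipartite subgraph $G[X_i, Y_i]$ retains minimum degree at least $\delta n / 2$ (as $\{X_i, Y_i\}$ was chosen to maximise the cut), and $|V(Q)| \le \xi n \ll \delta n$. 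Property \ref{itm:Hforest-1-var} then follows from $|V(Q)| \le |V(H)| + r \cdot (15/\delta + 2) \le \xi n$; \ref{itm:Hforest-3-var} follows directly from the construction together with invariant (a); and \ref{itm:Hforest-4-var} follows from invariant (b) once one observes that each leftover edge lying in an almost-bipartite $Z_i$ contributes exactly one vertex to each of $X_i, Y_i$.

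The point requiring the most care is balance preservation in invariant (b) during the merging step. Without forcing the merging paths to alternate between $X_i$ and $Y_i$ in the almost-bipartite case, the balance inherited from \ref{itm:Hforest-4} could be destroyed. This is resolved by applying \Cref{claim:findshortpathinexpander} to the largest bipartite subgraph of $G[Z_i]$ rather than to $G[Z_i]$ itself, and by choosing the endpoints $u_1, u_2$ on opposite sides of the bipartition (guaranteed by invariant (a) together with \ref{itm:Hforest-3}).
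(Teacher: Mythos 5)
Your proposal is correct and follows essentially the same route as the paper: apply \Cref{lem:path-forest}, merge components whose leaves meet the same $Z_i$ via short paths from \Cref{claim:findshortpathinexpander} taken inside the bipartite subgraph $G[X_i,Y_i]$ (so the connecting paths alternate and preserve balance), and fill in the remaining $P_i$ with edges of $G[X_i,Y_i]$ avoiding $V(Q)$. The only difference is cosmetic — a while-loop with explicit invariants in place of the paper's for-loop over $i \in [r]$ — and your justification of each invariant and of termination is sound.
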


		\begin{proof}
			Let $H$ be the linear forest guaranteed by \Cref{lem:path-forest}, satisfying properties \ref{itm:Hforest-1}--\ref{itm:Hforest-4}.
			Let $G'$ be the subgraph of $G$ obtained by removing all edges with both endpoints in $X_i$ or both endpoints in $Y_i$, for each $i \in [r]$ such that $G[Z_i]$ is $\beta$-almost-bipartite.

			We will iteratively add some edges from $G'$ to $H$ using the following procedure to construct subgraphs $H_0 \coloneqq H, H_1, \ldots, H_r$ satisfying $|V(H_{i})| \le \xi n + 15 i/\delta$ for every $i \in [r]$.

			For $i \in [r]$, suppose that $H_{i-1}$ is already defined such that $|V(H_{i-1})| \le \xi n + 15(i-1)/\delta$; we construct $H_i$ as follows. 
			If $H_{i-1}$ has two distinct components, say $P$ and $P'$, with leaves $p, p'$ in $Z_i$, say $p \in V(P) \cap Z_i$, $p' \in V(P') \cap Z_i$, let $L_i$ be a shortest $(p,p')$-path in $G'[Z_i] \setminus (V(H_{i-1}) \setminus \{p,p'\})$. 
			Applying \ref{itm:S-Z-path} of \Cref{claim:findshortpathinexpander} with $G[Z_i]$, $G'[Z_i]$, $V(H_{i-1}) \setminus \{p,p'\}$, $2 \xi$ playing the roles of $H$, $H'$, $W$, $\xi$, respectively, we obtain that the path $L_i$ has length at most $15/\delta$. (Note that \Cref{claim:findshortpathinexpander} indeed applies because $|V(H_{i-1})| \le \xi n + 15(i-1)/\delta \le 2\xi n$.)
			Define $H_i \coloneqq H_{i-1} \cup L_i$. Note that $|V(H_{i})| \le |V(H_{i-1})|+ 15/\delta \le \xi n + 15 i/\delta$, as required.

			If $G[Z_i]$ is $\beta$-almost-bipartite, then, by \ref{itm:Hforest-3}, one of the two vertices $p,p'$ is in $X_i$ and the other one is in $Y_i$. Therefore, we have the following.
			\begin{equation} \label{eqn:balancedpath} 
				\text{If $G[Z_i]$ is $\beta$-almost-bipartite, then $|V(L_i) \cap X_i| = |V(L_i) \cap Y_i|$}.
			\end{equation}

			Notice that for every $i \in [r]$, the number of leaves of $H_j$ in $Z_i$ is the same as the number of leaves of $H_{j-1}$ in $Z_i$, unless $j = i$, in which case it either remains the same, or it decreases from $2$ to $0$. Moreover, if $H_i$ has two leaves in $Z_i$, then they must belong to the same component of $H_i$, which is then also a component of $H_r$; denote this component by $P_i$.
			For every $i \in [r]$ such that $P_i$ is not defined (which means that $H_r$ has no components with leaves in $Z_i$), define $P_i$ to be an edge in $G'[Z_i] \setminus V(H_r)$.
			This is indeed possible because 
			\begin{equation} \label{eqn:Hforest-size}
				|V(H_r)|
				\le \xi n + 15r/\delta
				\le 2\xi n,
			\end{equation}
			which implies that $G'[Z_i] \setminus V(H_r)$ contains at least one edge.

			Notice that $P_1, \ldots, P_r$ are pairwise vertex-disjoint. We claim that they satisfy \ref{itm:Hforest-1-var}--\ref{itm:Hforest-4-var} (with $2\xi$ playing the role of $\xi$ for \ref{itm:Hforest-1-var}).
			Indeed, by \eqref{eqn:Hforest-size} we have $|V(P_i)| \le \max\{2, |V(H_r)|\} \le 2\xi n$, and since $P_i$ contains two leaves, we have $|V(P_i)| \ge 2$, proving \ref{itm:Hforest-1-var}.
			Note that \ref{itm:Hforest-3-var} holds when $P_i$ is a component of $H_r$ by the discussion in the previous paragraph together with \ref{itm:Hforest-3}; otherwise, it holds by the choice of $P_i$ as an edge in $G'[Z_i] \setminus V(H_r)$.
			For proving \ref{itm:Hforest-4-var}, recall that $Q = P_1 \cup \ldots \cup P_r$ and consider $i \in [r]$ such that $G[Z_i]$ is $\beta$-almost-bipartite. Also recall that if $L_i$ is defined, then its leaves are $p, p'$. Then we have,
			\begin{equation*}
				V(Q) \cap Z_i =
					\left\{
						\begin{array}{ll}
							V(H) \cap Z_i & \text{if $P_i \subseteq H_r$} \\
							(V(H) \cap Z_i) \,\cupdot\, V(P_i) & \text{if $P_i \nsubseteq H_r$ and $L_i$ is not defined} \\
							(V(H) \cap Z_i) \,\cupdot\, V(P_i) \,\cupdot\, (V(L_i) \setminus \{p, p'\}) & \text{if $P_i \nsubseteq H_r$ and $L_i$ is defined}.
						\end{array}
					\right.
			\end{equation*}
			(Here $A \cupdot B$ denotes the union of the disjoint sets $A$ and $B$.)
			Hence, \ref{itm:Hforest-4-var} follows from \ref{itm:Hforest-4}, \eqref{eqn:balancedpath}, and the choice of $P_i$ as an edge in $G'[Z_i] \setminus V(P_r)$ when $P_i \nsubseteq H_r$, completing the proof of \Cref{cor:path-forest}.
		\end{proof}

	\subsection{Proof of \Cref{thm:Kt-subdivision-dense}}
    \label{subsec:formalproofthm1.7}

		We are now ready to prove \Cref{thm:Kt-subdivision-dense}. 

		\begin{proof}[ of \Cref{thm:Kt-subdivision-dense}]
			Let $F$ be a graph with at least one edge, let $n_0 \in \mathbb{N}$ be such that $1/n_0 \ll c$, and let $G$ be a $d$-regular graph of order $n \ge n_0$ and let $d \ge cn$. Our aim is to show that $G$ has a perfect $\TF$-packing.

			By applying \Cref{lem:expander-decomposition} to $G$, we obtain positive numbers $\eta, \beta, \gamma, \zeta, \delta$ where
			\begin{equation*}
				1/n_0 \ll \eta \ll \beta \ll \gamma \ll \zeta \ll \delta \ll c
			\end{equation*}
			and a partition $\{Z_1, \ldots, Z_r\}$ of $V(G)$ satisfying properties \ref{itm:expander-decomp-1}--\ref{itm:expander-decomp-4} such that $r \le \lceil 1/c \rceil$.

			Let $\xi \in (0,1)$ satisfy
			\begin{equation*}
				\beta \ll \xi \ll \gamma. 
			\end{equation*}
            
			For each $i \in [r]$, let ${X_i, Y_i}$ be a partition of $Z_i$ that maximizes $e_G(X_i, Y_i)$. Let $G'$ be the subgraph of $G$ obtained by removing all edges with both endpoints in $X_i$ or both in $Y_i$, for each $i \in [r]$ such that $G[Z_i]$ is $\beta$-almost-bipartite. Let $P_1, \ldots, P_r$ be vertex-disjoint paths satisfying \ref{itm:Hforest-1-var}--\ref{itm:Hforest-4-var}, guaranteed by \Cref{cor:path-forest}, and let $Q \coloneqq P_1 \cup \ldots \cup P_r$.

\subsubsection{Constructing a balanced pair of $F$-subdivisions in each expander}

			For every $i \in [r]$, we will construct two vertex-disjoint subdivisions of $F$ in $G'[Z_i] \setminus V(Q)$ as follows. 

			\begin{claim}
				\label{claim:findtwosubdivisionsbalanced}
				For every $i \in [r]$, there exist two vertex-disjoint $F$-subdivisions $F_i, F'_i$ in $G'[Z_i] \setminus V(Q)$ such that $|V(F_i)|, |V(F_i')| \le e(F) \cdot 15 / \delta$. Moreover, if $G[Z_i]$ is $\beta$-almost-bipartite, then 
				\begin{equation} \label{eqn:balanced-subdivision}
					|X_i \cap (V(F_i \cup F'_i))| = |Y_i \cap (V(F_i \cup F'_i))|.
				\end{equation}
			\end{claim}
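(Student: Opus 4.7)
The plan is to build $F_i$ and $F'_i$ greedily, using \Cref{claim:findshortpathinexpander} to connect branch vertices by short paths inside $G'[Z_i]$. The crucial observation is that, in the almost-bipartite case, if we place \emph{all} branch vertices of $F_i$ inside $X_i$ and \emph{all} branch vertices of $F'_i$ inside $Y_i$, then \eqref{eqn:balanced-subdivision} will hold automatically, independently of the particular lengths of the subdivision paths we choose.

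First, I would select the branch vertices. If $G[Z_i]$ is $\beta$-almost-bipartite, then by the maximality of $\{X_i, Y_i\}$ combined with \ref{itm:expander-decomp-2}, every vertex of $Z_i$ has at least $\delta n / 2$ neighbours across the cut, so $|X_i|, |Y_i| \ge \delta n / 2$; pick $|V(F)|$ distinct branch vertices for $F_i$ in $X_i \setminus V(Q)$ and $|V(F)|$ distinct branch vertices for $F'_i$ in $Y_i \setminus V(Q)$. If $G[Z_i]$ is $\gamma$-far-from-bipartite, simply pick $2|V(F)|$ distinct vertices in $Z_i \setminus V(Q)$ and split them arbitrarily between the two subdivisions. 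Either way, this is possible since $|V(Q)| \le r\xi n$ by \ref{itm:Hforest-1-var} and $\xi \ll \delta$, while $|V(F)|$ depends only on $F$.

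Next, fix an arbitrary ordering of the edges of $F$ and process them one at a time. When constructing the subdivision path for an edge $uv \in E(F)$ in (say) $F_i$, let $W$ be the union of $V(Q)$, all vertices already used in the partial construction of $F_i$ and $F'_i$, and all branch vertices other than the two corresponding to $u$ and $v$. A crude bound gives $|W| \le r\xi n + 2|V(F)| + 2e(F) \cdot 15/\delta \le 2\xi n$, since everything apart from the $V(Q)$ term is a constant depending only on $F$ and $\delta$. I may then apply \Cref{claim:findshortpathinexpander} to $G[Z_i]$ (whose minimum degree and expansion are supplied by \ref{itm:expander-decomp-2} and \ref{itm:expander-decomp-3}), with $G'[Z_i]$ playing the role of $H'$ and $2\xi$ playing the role of $\xi$; item \ref{itm:S-Z-path} then produces a path of length at most $15/\delta$ in $G'[Z_i] \setminus W$ joining the two branch vertices of $uv$, which I take as the subdivision path for $uv$. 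Repeating this for every edge of $F$ and for both $F_i$ and $F'_i$ yields the two desired vertex-disjoint $F$-subdivisions, each containing at most $e(F) \cdot 15/\delta$ vertices.

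Finally, in the almost-bipartite case I would verify \eqref{eqn:balanced-subdivision}. Since $G'[Z_i]$ is bipartite with parts $\{X_i, Y_i\}$, every subdivision path of $F_i$ has both endpoints in $X_i$ and hence even length $2k_e$, contributing $k_e-1$ internal vertices to $X_i$ and $k_e$ to $Y_i$; symmetrically, every subdivision path of $F'_i$ has even length $2k'_e$, contributing $k'_e$ internal vertices to $X_i$ and $k'_e-1$ to $Y_i$. Summing over all edges of $F$ and including the $|V(F)|$ branch vertices of $F_i$ (all in $X_i$) and of $F'_i$ (all in $Y_i$), I obtain
\begin{equation*}
|X_i \cap V(F_i \cup F'_i)| = |V(F)| - e(F) + \sum_{e \in E(F)}(k_e + k'_e) = |Y_i \cap V(F_i \cup F'_i)|,
\end{equation*}
establishing \eqref{eqn:balanced-subdivision}. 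The main conceptual step is the realisation that putting the branch vertices of $F_i$ and $F'_i$ into opposite parts forces the balance automatically; without this trick one would have to carefully match path lengths between $F_i$ and $F'_i$, which is the principal obstacle that this choice circumvents.
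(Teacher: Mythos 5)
Your proposal is correct and follows essentially the same route as the paper: branch vertices of $F_i$ in $X_i$ and of $F'_i$ in $Y_i$, subdivision paths of length at most $15/\delta$ obtained from \Cref{claim:findshortpathinexpander} while forbidding $V(Q)$, the other branch vertices and the previously built paths, and the same parity count showing each subdivision contributes an imbalance of $|V(F)| - e(F)$ toward its own side so that the two cancel. The paper phrases the path construction as choosing a maximal internally-disjoint collection and deriving a contradiction if it is incomplete, which is just a repackaging of your greedy iteration.
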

			\begin{proof}
				Fix $i \in [r]$. Let $S, S' \subseteq Z_i \setminus V(Q)$ be disjoint sets of size $|V(F)|$, where $S \subseteq X_i$ and $S' \subseteq Y_i$ if $G[Z_i]$ is $\beta$-almost-bipartite, and let $\phi : S \to V(F)$ and $\phi' : S' \to V(F)$ be two injections chosen arbitrarily. Let $T$ be the set of unordered pairs $\{s,s'\}$ with $s,s' \in S$ such that $\phi(s) \phi(s')$ is an edge in $F$, and similarly, let $T'$ be the set of unordered pairs $\{s,s'\}$ with $s,s' \in S'$ such that $\phi'(s) \phi'(s')$ is an edge in $F$. 

				Let $\cP$ be a maximal collection of pairwise internally vertex-disjoint paths in $G'[Z_i] \setminus V(Q)$, with internal vertices in $Z_i \setminus (V(Q) \cup S \cup S')$, whose length is at most $15/\delta$ and such that each of the paths in $\cP$ is an $(s,s')$-path for a distinct pair $\{s, s'\} \in T \cup T'$. We claim that $|\cP| = 2e(F)$. Indeed, suppose otherwise. Then there is a pair $(s,s') \in T \cup T'$ for which there is no $(s,s')$-path in $\cP$. Fix such a pair $(s,s')$, and let $W \coloneqq V(\cP) \cup V(Q) \cup S \cup S'$.
				Then 
				\begin{equation*}
					|W| \le 2e(F) \cdot 15/\delta + \xi n + 2|V(F)| 
					\le 2\xi n.
				\end{equation*}
				Thus, by applying \ref{itm:S-Z-path} of \Cref{claim:findshortpathinexpander} (with $2 \xi$ playing the role of $\xi$), we obtain an $(s,s')$-path $P$ in $G'[Z_i] \setminus W$ of length at most $15/\delta$. But then $\cP \cup \{P\}$ contradicts the maximality of $\cP$. This shows that $|\cP| = 2e(F)$, as claimed. 

				Pick a collection $\cP$ as guaranteed by the previous paragraph.
				Take $F_i$ to be the subgraph consisting of the paths in $\cP$ with ends in $S$, and take $F_i'$ to be the subgraph consisting of the paths in $\cP$ with ends in $S'$.
				We claim that $F_i$ and $F_i'$ satisfy the requirements of \Cref{claim:findtwosubdivisionsbalanced}.
				Indeed, first note that $F_i$ is an $F$-subdivision in $G'[Z_i] \setminus V(Q)$ whose branch vertices are in $S$, and similarly $F_i'$ is an $F$-subdivision in $G'[Z_i] \setminus V(Q)$ whose branch vertices are in $S'$.
				Second, notice that $F_i$ and $F_i'$ are vertex-disjoint, since the paths in $\cP$ are internally vertex-disjoint whose  internal vertices are in $Z_i \setminus (V(Q) \cup S \cup S')$.
				Third, we have $|V(F_i)|, |V(F_i')| \le e(F) \cdot 15/\delta$, since every path in $\cP$ has length at most $15/\delta$ by our choice of $\cP$.
				Finally, if $G[Z_i]$ is $\beta$-almost-bipartite, then for every path $P \in \cP$ with ends in $S$, we have
				\begin{equation*}
					|V(P^{\circ}) \cap X_i| = |V(P^{\circ}) \cap Y_i| - 1,
				\end{equation*}
				where $P^{\circ}$ denotes the interior of the path $P$.
				Hence
				\begin{align*}
					|V(F_i) \cap X_i| - |V(F_i) \cap Y_i|
					& = \sum_{\text{$P$ a path in $\cP$ with ends in $S$}} \Big(|V(P^{\circ}) \cap X_i| -  |V(P^{\circ}) \cap Y_i|\Big) + |S| \\
					& = -e(F) + |V(F)|,
				\end{align*}
				and analogously,
				\begin{equation*}
					|V(F_i') \cap Y_i| - |V(F_i') \cap X_i|
					= -e(F) + |V(F)|.
				\end{equation*}
				Altogether, since $F_i$ and $F_i'$ are vertex-disjoint,
				\begin{align*}
					&|V(F_i \cup F_i') \cap X_i| - |V(F_i \cup F_i') \cap Y_i| \\
					& \qquad = |V(F_i) \cap X_i| - |V(F_i) \cap Y_i| + |V(F_i') \cap X_i| - |V(F_i') \cap Y_i|
					= 0.
				\end{align*}
				This proves \eqref{eqn:balanced-subdivision}, completing the proof of \Cref{claim:findtwosubdivisionsbalanced}.
			\end{proof}

		\subsubsection{Absorbing the paths $P_1, \ldots, P_r$ and all the uncovered vertices into the subdivisions $F_i$} 

			For every $i \in [r]$, let $F_i, F'_i$ be the two vertex-disjoint subdivisions of $F$ in $G'[Z_i] \setminus V(Q)$, as guaranteed by~\Cref{claim:findtwosubdivisionsbalanced}. 
			Denote the leaves of $P_i$ by $u_i$ and $v_i$, where $u_i \in X_i$ and $v_i \in Y_i$ if $G[Z_i]$ is $\beta$-almost-bipartite. 
			Let $x_iy_i$ be an arbitrary edge of $F_i$, where $x_i \in X_i$ and $y_i \in Y_i$ if $G[Z_i]$ is $\beta$-almost-bipartite. 
			Let $Q_i$ be a shortest $(y_i, u_i)$-path in $G'[Z_i]$ whose interior avoids $V(Q) \cup V(F_i) \cup V(F_i')$. By \ref{itm:Hforest-1-var},
            \begin{equation}
            \label{QfiF'i}
             |V(Q)| + |V(F_i)| + |V(F_i')| \le r\xi n + 2 |V(F)| \le \lceil 1/c \rceil \xi n + 2 |V(F)| \le \sqrt{\xi} n.   
            \end{equation}
   Hence, it follows that $Q_i$ exists and is of length at most $15/\delta$ by applying \ref{itm:S-Z-path} of \Cref{claim:findshortpathinexpander} (with $\sqrt{\xi}$ playing the role of $\xi$). Let $H \coloneqq G[Z_i]$, and let $H' \coloneqq G'[Z_i]$. Then, by \Cref{claim:findshortpathinexpander}, $H'$ has no $\zeta/2$-sparse cuts. Now let
			\begin{equation*}
				W_i \coloneqq Z_i \cap \Big(\big(V(Q) \cup V(F_i) \cup V(F_i') \cup V(Q_i)\big) \setminus \{x_i, v_i\}\Big).
			\end{equation*}
			By \eqref{QfiF'i} and the fact that $|V(Q_i)| \le \frac{15}{\delta} + 1$, we have $|W_i| \le \frac{15}{\delta} + 1 + \sqrt{\xi} n \le 2\sqrt{\xi} n.$
            
            Note that $H'$ has maximum degree at most $d$ since it is a subgraph of $G$. By \ref{itm:expander-decomp-1}, and the fact that $|V(H')| = |Z_i| \ge \delta n$ (by \ref{itm:expander-decomp-2}), we have
			\begin{equation*}
				2e(H')
				\ge d \cdot |V(H')| - \eta n^2 - \beta n^2
				\ge d \cdot |V(H')| - \frac{2\beta}{\delta^2} \cdot |V(H')|^2,
			\end{equation*}
			showing that $H'$ has average degree at least $d - (2\beta/\delta^2) |V(H')|$.
			Moreover, $H'$ is either bipartite or $\gamma$-far-from-bipartite by  definition. 
			Therefore, using $|W_i| \le 2\sqrt{\xi} n$, we can apply \Cref{lem:hamilton-cycle} with $H'$, $W_i$, $2\beta/\delta^2$, $2\xi$, $\zeta/2$ playing the roles of $G$, $W$, $\eta$, $\xi$, $\zeta$, respectively, to find a Hamilton path $Q_i'$ in $H' \setminus W_i = G'[Z_i] \setminus W_i$ with ends $x_i$ and $v_i$. Note that \Cref{lem:hamilton-cycle} indeed applies because when $G[Z_i]$ is $\beta$-almost-bipartite (so $H'$ is bipartite), \ref{itm:Hforest-4-var} and \eqref{eqn:balanced-subdivision} together imply that $|X_i \setminus W_i| = |Y_i \setminus W_i|$.

			Now replace the edge $x_iy_i$ in $F_i$ with the path $x_i Q_i' v_i P_i u_i Q_i y_i$ to obtain a subgraph $F''_i$ which is a subdivision of $F$. Note that $V(F''_i) \cup V(F'_i) = (Z_i \setminus V(Q)) \cup V(P_i)$.
			Hence, $\{F'_i, F''_i \mid i \in [r]\}$ is a collection of $F$-subdivisions that are pairwise vertex-disjoint covering all vertices of $G$, completing the proof of Theorem~\ref{thm:Kt-subdivision-dense}.
		\end{proof}

	\section{Concluding remarks}

		Recall that our main result (\Cref{mainthm:packingsubgraphs}) states that for every bipartite graph $H$, every dense regular graph $G$ contains an $H$-packing covering all but $O(1)$ vertices. As noted in the introduction, this was already known for unbalanced bipartite graphs $H$ by K\"uhn and Osthus~\cite{kuhn2005packings} (see \Cref{KOdifferentvertexclasses}). In fact, they proved a slightly stronger statement in this case: the host graph $G$ need only be \emph{almost regular} rather than regular. More precisely, for every bipartite $H$ and every $c>0$, there exist $\eps>0$ and $C$ such that any $n$-vertex graph $G$ in which every vertex has degree between $(c-\eps)n$ and $(c+\eps)n$ contains an $H$-packing covering all but at most $C$ vertices. This fails, however, for balanced bipartite $H$ (for example, if $G$ is a slightly unbalanced bipartite graph) so our result cannot, in general, be extended to the almost regular setting.

		Keevash proved an analogous result for hypergraphs (see Theorem~5.14 in~\cite{keevash2011hypergraph}).\footnote{Keevash’s theorem is stated and proved for $3$-uniform hypergraphs, but the analogous statement for higher uniformities can be obtained by the same methods.} Specifically, he showed that for every $3$-partite $3$-graph $H$ whose parts are not all of equal size, and for every $0 < c_1, c_2 < 1$, there exist $\eps>0$ and $C>0$ such that if $G$ is a $3$-graph on $n$ vertices in which every vertex has degree between $(1 - \eps) c_1n^2$ and $(1+ \eps)c_1 n^2$, and every pair of vertices has degree more than $c_2 n$, then $G$ contains an $H$-packing covering all but at most $C$ vertices. Here, the \emph{degree of a pair} $\{u,v\}$ of vertices in $G$ is the number of vertices $w \in V(G) \setminus \{u,v\}$ such that $\{u,v,w\} \in E(G)$.
        
        It is plausible that, analogous to \Cref{mainthm:packingsubgraphs}, the conclusion holds for \emph{all} tripartite $3$-graphs $H$ when $G$ is assumed to be regular.

\begin{qn}
Is it true that for every tripartite $3$-graph $H$ and every $0 < c < 1$, there exist $\eps>0$ and $C>0$ such that the following holds? If $G$ is a $3$-graph on $n$ vertices in which all vertices have the same degree, which is at least $c n^2$, and every pair of vertices has degree at least $c n$, then $G$ contains an $H$-packing covering all but at most $C$ vertices.
\end{qn}

        The problem of partitioning the vertex set of edge-coloured complete graphs into a small number of monochromatic subgraphs has a very rich history; see \cite{gyarfas2016vertex} for a recent survey. An early example of a problem of this kind is Lehel's conjecture. 
		An interesting problem in this direction, suggested by Matija Buci\'c, is to consider a variant of \Cref{mainthm:packingsubgraphs} where the edges of $G$ are coloured and we seek packings using monochromatic $H$-copies (where different copies of $H$ may receive different colours). 
        
		\begin{qn}
			Is it true that for every bipartite graph $H$, integer $r \ge 2$, and $0 < c < 1$, there exists $C>0$ such that the following holds? If $G$ is an $n$-vertex $d$-regular graph with $d \ge c n$, whose edges are coloured with $r$ colours, then $G$ contains a collection of vertex-disjoint monochromatic copies of $H$ covering all but at most $C$ vertices.
		\end{qn}
The problem of finding a large monochromatic $H$-packing in a graph with a given minimum degree was studied by Balogh, Freschi, and Treglown~\cite{balogh2026ramsey}.

	\bibliographystyle{abbrv}
	\bibliography{references}

	\appendix

	\section{Perfect fractional matching implies perfect $2$-matching}
    \label{fracmatchingimplies2matching}

		\begin{proof}[ of \Cref{fact:fractional-matching}]
			Suppose that $G$ is a graph with a perfect fractional matching. Let $w : E(G) \mapsto [0,1]$ be a perfect fractional matching in $G$ that minimises the number of edges whose weight is neither $0$ nor $1$, and denote by $G'$ the subgraph of $G$ whose edges have non-zero weight in $w$. We claim that $G'$ is a $2$-matching; that is, every connected component of $G'$ is either a single edge or an odd cycle. Notice that $G'$ is spanning, because $w$ is a perfect fractional matching. Thus, if $G'$ is a $2$-matching, this would show that $G'$ is a perfect $2$-matching in $G$ as required.

			Therefore, it remains to show that $G'$ is a $2$-matching. Suppose for a contradiction that it is not, and let $F$ be a component of $G'$ which is neither an edge nor an odd cycle.

			First, notice that $F$ has minimum degree at least $2$. Indeed, otherwise, let $v$ be a vertex with degree $1$ in $F$, let $u$ be its unique neighbour, and let $x$ be a neighbour of $u$ which is not $v$. Then $w(vu), w(ux) > 0$, implying that $w(vu) < 1$, contradicting the assumption that $w$ is a perfect fractional matching.

			Next, we claim that $F$ does not have even cycles. Indeed, suppose that $C = (v_1 \ldots v_{2s})$ is an even cycle in $F$. For $x > 0$, let $w_x : E(G) \mapsto [0,1]$ be the weighting of the edges of $G$ defined as follows (with the addition of indices taken modulo $2s$). 
			\begin{equation*}
				w_x = \left\{
					\begin{array}{ll}
						w(e) & e \notin E(C) \\
						w(e) + x & e = v_{2i-1}v_{2i} \text{ for $i \in [s]$}\\
						w(e) - x & e = v_{2i}v_{2i+1} \text{ for $i \in [s]$}.
					\end{array}
				\right.
			\end{equation*}
			Note that the edges $v_iv_{i+1}$, with $i \in [2s]$, have weight strictly between $0$ and $1$, and so there exists $x > 0$ such that all weights in $w_x$ are in $[0,1]$. Let $x'$ be the maximum $x$ with this property. Then at least one of the edges in $C$ has weight either $0$ or $1$ in $w_{x'}$, showing that $w_{x'}$ is a perfect fractional matching in $G$ with fewer edges whose weight is neither $0$ nor $1$ compared to $w$, a contradiction to the choice of $w$. This shows that $F$ does not have even cycles, as desired. Moreover, this shows that $F$ cannot be a cycle since we assumed that $F$ is not an odd cycle.

			Since $F$ has minimum degree at least $2$ and is not a cycle, it contains two distinct cycles $C_1$ and $C_2$. Because $F$ has no even cycles, these cycles are odd and share at most one vertex (otherwise their union contains an even cycle). Let $P$ be a shortest path with one end in $C_1$ and the other end in $C_2$. (Note that $P$ is a singleton if the cycles share a vertex.) Write $C_1 = (v_1 \ldots v_{2s+1})$, $C_2 = (u_1 \ldots u_{2t+1})$, and suppose that $P$ is a path from $v_1$ to $u_1$. Now, for $x > 0$, let $w_x : E(G) \mapsto [0,1]$ be the weighting of the edges of $G$ obtained from $w$ by making the following modifications.
			\begin{itemize}
				\item
					Decrease the weight of edges $v_{2i-1}v_{2i}$, with $i \in [s+1]$, by $x$ (where addition in the indices is taken modulo $2s+1$).
				\item
					Increase the weight of edges $v_{2i}v_{2i+1}$, with $i \in [s]$, by $x$.
				\item
					Increase the weight of the first, third, fifth, etc.\ edges of $P$ by $2x$ (starting from $v_1$).
				\item
					Decrease the weight of the second, fourth, sixth, etc.\ edges of $P$ by $2x$ (starting from $v_1$).
				\item
					If $P$ has odd length, decrease the weight of edges $u_{2i-1}u_{2i}$ with $i \in [t+1]$ by $x$ (where addition of indices is taken modulo $2t+1$). Otherwise, increase the weight of these edges by $x$.
				\item
					If $P$ has odd length, increase the weight of edges $u_{2i}u_{2i+1}$, with $i \in [t]$, by $x$. Otherwise, decrease the weight of these edges by $x$.
			\end{itemize}
			As before, there exists $x > 0$ such that all weights in $w_x$ are in $[0,1]$, and let $x'$ be the maximum $x$ with this property. Then $w_{x'}$ is a perfect fractional matching of $G$ with fewer edges whose weight is neither $0$ nor $1$ compared to $w$, a contradiction.

			In conclusion, the assumption that $F$ is a component of $G'$ that is neither an odd cycle nor an edge leads to a contradiction, so $G'$ is a $2$-matching (and thus $G'$ is a perfect $2$-matching in $G$), as required.
		\end{proof}

	\section{Hamiltonicity of clusters} \label{sec:ham}

		\def \RN{\mathrm{RN}}
        
In this section, we present the proof of \Cref{lem:hamilton-cycle}. Our proof closely follows that of Lemma 4 in \cite{gruslys2021cycle}, but we include it here for completeness and to address differences in the lemma's statement.

The proof is based on known results concerning the Hamiltonicity of \emph{robust out-expanders}, a concept introduced by K\"uhn, Osthus and Treglown \cite{kuhn2010hamiltonian}. Before stating the relevant result, we first introduce some necessary definitions.
		
Given a digraph $G$ on $n$ vertices, a set of vertices $S$ and a parameter $\nu \in (0, 1)$, the \emph{robust $\nu$-out-neighbourhood} of $S$ in $G$, denoted $\RN^+_{\nu, G}(S)$, is the set of vertices in $G$ that have at least $\nu n$ in-neighbours in $S$; we omit the subscript $G$ when it is clear from the context.  Given $0 < \nu \le \tau < 1$, we say that $G$ is a \emph{robust $(\nu, \tau)$-out-expander} if $|\RN^+_{\nu }(S)| \ge |S| + \nu n$ for every set of vertices $S$ with $\tau n \le |S| \le (1 - \tau) n$.  We shall also use the following undirected version of a robust out-neighbourhood. In a graph $G$ on $n$ vertices, the \emph{robust $\nu$-neighbourhood} of a set of vertices $S$, denoted $\RN_{\nu, G}(S)$, is the set of vertices in $G$ with at least $\nu n$ neighbours in $S$. As before, we may omit the subscript $G$ when it is clear from the context. We say that $G$ is a \emph{robust $(\nu, \tau)$-expander} if $|\RN_{\nu, G }(S)| \ge |S| + \nu n$ for every set of vertices $S$ with $\tau n \le |S| \le (1 - \tau) n$. 

We will need the following theorem from \cite{kuhn2010hamiltonian}. The minimum semi-degree of a digraph $G$, denoted $\delta^0(G)$, is defined as $\delta^0(G) = \min\{ \delta^+(G), \delta^-(G)\}$, where $\delta^+(G), \delta^-(G)$ are the minimum out-degree and in-degree of $G$, respectively.
		\begin{theorem}[K\"uhn, Osthus and Treglown \cite{kuhn2010hamiltonian}] \label{thm:robust-expanders-hamilton}
			Let $n \in \N$ and let $ \nu, \tau, \gamma$ be reals such that $1 / n \ll \nu \le \tau \ll \gamma < 1$. Let $G$ be a digraph on $n$ vertices with $\delta^0(G) \ge \gamma n$ which is a robust $(\nu, \tau)$-out-expander. Then $G$ contains a Hamilton cycle.
		\end{theorem}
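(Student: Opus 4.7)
I would prove this theorem of K\"uhn--Osthus--Treglown via the absorbing method, combined with the directed regularity lemma. The overall strategy is: (i) build a small \emph{absorbing structure} that can swallow any small leftover set; (ii) cover almost all remaining vertices with a long path through super-regular pairs corresponding to a Hamilton cycle of the reduced digraph; (iii) close up using the absorber. Throughout, robust out-expansion will provide the crucial ``flexibility'' that lets us both find absorbers and make connections.

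First, I would apply the directed version of Szemer\'edi's regularity lemma (with an appropriate density parameter $d \ll \gamma$) to obtain a partition of $V(G)$ into an exceptional set $V_0$ of size $o(n)$ and clusters $V_1,\dots,V_m$ of equal size, together with a spanning subdigraph $G'$ in which every pair $(V_i,V_j)$ is either empty or $\eps$-regular with density more than $d$. A standard calculation shows that the reduced digraph $R$ (with $V_iV_j$ an arc whenever $(V_i,V_j)$ is regular with density $>d$) inherits the hypotheses with slightly weaker parameters: $\delta^0(R)\ge (\gamma-2d)m$ and $R$ is a robust $(\nu/2,2\tau)$-out-expander. Because these parameters still satisfy $1/m\ll \nu/2\ll 2\tau\ll \gamma-2d$, one can show (by the iterative ``rotation-and-extension'' argument that is the backbone of the Kelly--Kühn--Osthus program) that $R$ itself contains a Hamilton cycle $C_R = V_{\sigma(1)}\dots V_{\sigma(m)}$. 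Using standard super-regularization, I would then adjust a small set of vertices so that each consecutive pair $(V_{\sigma(i)},V_{\sigma(i+1)})$ along $C_R$ is $(\eps',d')$-super-regular.

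Second, I would build an absorbing path $P_{\mathrm{abs}}$ of length $o(n)$ inside $G$. For each vertex $v$, robust $\nu$-out-expansion (together with robust in-expansion, which follows from minimum semi-degree and a double-counting on $\RN^+_\nu$) guarantees that $v$ has $\Omega(n^2)$ ``absorbing triples'' $(x,v,y)$ such that both $xv,vy$ and $xy$ are arcs of $G$. A standard random-greedy construction then yields a path $P_{\mathrm{abs}}$ of bounded length that contains, for every vertex $v\in V(G)$, many internally-disjoint absorbing triples; hence for every sufficiently small set $T$ disjoint from $V(P_{\mathrm{abs}})$, there is a path on $V(P_{\mathrm{abs}})\cup T$ with the same endpoints as $P_{\mathrm{abs}}$. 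I would embed $P_{\mathrm{abs}}$ carefully so that its endpoints lie in chosen clusters and it uses only a negligible fraction of each cluster.

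Third, I would construct an almost-spanning cycle through $P_{\mathrm{abs}}$. After removing $V(P_{\mathrm{abs}})$ from each cluster, the pairs along $C_R$ remain super-regular (with slightly worse parameters). Iterating the blow-up/Hamilton-path lemma for super-regular pairs, I would find, for each arc $V_{\sigma(i)}V_{\sigma(i+1)}$ of $C_R$, a Hamilton path in the corresponding super-regular pair that avoids a small reserve. Connecting consecutive such paths through short ``linking'' paths (produced using robust out-expansion: for any two vertex sets of size $\ge \tau n$, robust expansion guarantees an arc between them, so a simple connectivity argument gives bounded-length linking paths avoiding any $o(n)$-set) and gluing in $P_{\mathrm{abs}}$, I would obtain a cycle $C^*$ that covers all but a set $T$ of size $o(n)$, consisting of a few leftover vertices from each cluster and from $V_0$. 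Finally, by the absorbing property, $C^*$ can be modified on $V(P_{\mathrm{abs}})\cup T$ to produce a genuine Hamilton cycle.

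The main obstacle is the simultaneous control of three things: ensuring the absorber has the ``universal absorption'' property (which requires the correct lower bound on absorbing triples for every vertex, and this is exactly where robust expansion is used rather than mere expansion), maintaining super-regularity after removing the absorber and any reserved linking vertices, and keeping the leftover set $T$ small enough to be absorbed. A subtler point is showing that $R$ itself has a Hamilton cycle: this essentially reduces to the same theorem on a smaller graph, so one really proves the statement in a self-contained way by the rotation-extension method, using robust out-expansion to guarantee that at each stage, many rotations are available and hence a longest path can be extended or closed.
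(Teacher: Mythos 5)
First, a remark on scope: the paper never proves this statement -- it is quoted from K\"uhn, Osthus and Treglown \cite{kuhn2010hamiltonian} and used as a black box in Appendix~\ref{sec:ham} -- so there is no in-paper proof to compare yours against; I can only assess your sketch on its own merits, and it has two genuine gaps.

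The first is your claim that robust out-expansion together with $\delta^0(G) \ge \gamma n$ gives every vertex $\Omega(n^2)$ absorbing triples $(x,v,y)$ with $xv, vy, xy \in E(G)$. This is false. Take $V(G) = \{v\} \cup A \cup B \cup C$ with $|A| = |B| = n/4$ and $|C| = n/2 - 1$, and include all arcs inside each of $A$, $B$, $C$, all arcs from $A$ to $C$, from $C$ to $B$ and from $B$ to $A$, together with all arcs from $A$ to $v$ and from $v$ to $B$, but no arcs from $A$ to $B$. Then $\delta^0 \ge n/4 - 1 \ge \gamma n$ for $\gamma = 1/5$, and one checks directly that this digraph is a robust $(\nu,\tau)$-out-expander for, say, $\nu = \tau = 10^{-2}$: any $S$ with $\tau n \le |S| \le (1-\tau)n$ meets one of $A, B, C$ in at least $\nu n$ vertices, and its robust out-neighbourhood then contains $A \cup C$, $A \cup B$ or $B \cup C$ accordingly, which yields the required expansion in every case. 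Yet $N^-(v) = A$ and $N^+(v) = B$, and there is not a single arc from $N^-(v)$ to $N^+(v)$, so $v$ admits no absorbing triple at all, let alone $\Omega(n^2)$ of them. Hence the absorbing path with the universal absorption property cannot be built as you describe; an absorption proof would need genuinely different, longer connection-based gadgets, and constructing those is essentially the difficulty you are assuming away.

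The second gap is the Hamiltonicity of the reduced digraph $R$. P\'osa-type rotation--extension is not available in digraphs (reversing a segment of a directed path reverses its arcs), and, as you concede, your fallback is to invoke ``the same theorem on a smaller graph'', which is circular. The actual argument of \cite{kuhn2010hamiltonian} avoids both issues: after the diregularity lemma it only extracts a $1$-factor of $R$ (an easy consequence of robust expansion via a Hall/K\"onig-type argument on an auxiliary bipartite graph), it inserts the exceptional vertices and merges the cycles of the $1$-factor using short ``shifted walks'', whose existence and balance-preserving property come from robust out-expansion, and it finishes with an embedding lemma for a cycle of super-regular pairs; no absorber and no Hamilton cycle of $R$ is needed. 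Relatedly, your final step ignores the exact balancing of cluster sizes required to span a cycle of super-regular pairs after deleting the absorber and the linking vertices -- maintaining this balance is precisely what the shifted-walk machinery is designed for.
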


		We shall use the following corollary of \Cref{thm:robust-expanders-hamilton}.
        
		\begin{corollary} \label{cor:robust-expanders-hamilton}
			Let $ \nu, \tau, \gamma$ be reals and let $n$ be an integer such that $1 / n \ll \nu \le \tau \ll \gamma < 1$. Let $G$ be a digraph on $n$ vertices with $\delta^0(G) \ge \gamma n$ which is a robust $(\nu, \tau)$-out-expander. Then for every choice of distinct vertices $x$ and $y$, there is a Hamilton path in $G$ with ends $x$ and $y$.
		\end{corollary}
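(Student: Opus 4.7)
The plan is to reduce to \Cref{thm:robust-expanders-hamilton} by identifying $x$ and $y$ into a single new vertex of an auxiliary digraph $G^{*}$, where any Hamilton cycle of $G^{*}$ lifts to a Hamilton $(x,y)$-path of $G$. Formally, I set $V(G^{*}) \coloneqq (V(G)\setminus\{x,y\}) \cup \{w\}$ with $w$ a new vertex; keep every arc of $G$ between vertices of $V(G)\setminus\{x,y\}$; and for each $u \in V(G)\setminus\{x,y\}$, include the arc $uw$ in $G^{*}$ iff $uy \in E(G)$ and the arc $wu$ iff $xu \in E(G)$ (discarding any self-loop at $w$ arising from $xy$ or $yx$). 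A Hamilton cycle $(w, u_1, \ldots, u_{n-2}, w)$ of $G^{*}$ then corresponds to the desired Hamilton path $(x, u_1, \ldots, u_{n-2}, y)$ in $G$, since the arc $wu_1$ lifts to $xu_1 \in E(G)$ and the arc $u_{n-2}w$ lifts to $u_{n-2}y \in E(G)$.

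It remains to verify that $G^{*}$, on $n^{*} \coloneqq n-1$ vertices, satisfies the hypotheses of \Cref{thm:robust-expanders-hamilton} with slightly weakened constants, say $\gamma^{*} \coloneqq \gamma/2$, $\tau^{*} \coloneqq 2\tau$, and $\nu^{*} \coloneqq \nu/2$, which still satisfy $1/n^{*} \ll \nu^{*} \le \tau^{*} \ll \gamma^{*} < 1$. The semi-degree check is immediate: identifying two vertices removes at most two arcs at each remaining vertex, and the in- and out-degrees of $w$ inherit from those of $y$ and $x$ respectively (losing at most one if $xy$ or $yx$ is an arc of $G$), so $\delta^{0}(G^{*}) \ge \gamma n - 2 \ge \gamma^{*} n^{*}$ for $n$ large.

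To check the robust out-expansion, given $S \subseteq V(G^{*})$ with $\tau^{*} n^{*} \le |S| \le (1-\tau^{*}) n^{*}$, I let $S_G \subseteq V(G)$ equal $(S \setminus \{w\}) \cup \{x,y\}$ if $w \in S$ and $S_G \coloneqq S$ otherwise, so that $\tau n \le |S_G| \le (1-\tau) n$ for our choice of $\tau^{*}$. The hypothesis on $G$ then gives $|T| \ge |S_G| + \nu n$, where $T \coloneqq \RN^{+}_{\nu, G}(S_G)$. For any $v \in T \cap (V(G)\setminus\{x,y\})$, one computes $|N^{-}_{G^{*}}(v) \cap S| \ge |N^{-}_{G}(v) \cap S_G| - 2 \ge \nu n - 2 \ge \nu^{*} n^{*}$, so $v \in \RN^{+}_{\nu^{*}, G^{*}}(S)$; and an analogous count shows that if $x \in T$ or $y \in T$ then $w \in \RN^{+}_{\nu^{*}, G^{*}}(S)$. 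Combining these contributions gives
\begin{equation*}
|\RN^{+}_{\nu^{*}, G^{*}}(S)| \ge |T| - 1 \ge |S_G| + \nu n - 1 \ge |S| + \nu^{*} n^{*}
\end{equation*}
for $n$ large, establishing the required robust expansion.

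The main thing to be careful about is this bookkeeping in the robust-expansion check, but the perturbations caused by the identification are of size at most two and are comfortably absorbed by passing to the slightly weakened constants $\nu^{*}, \tau^{*}, \gamma^{*}$. With both hypotheses verified, \Cref{thm:robust-expanders-hamilton} applied to $G^{*}$ yields a Hamilton cycle, and unfolding $w$ to $x$ and $y$ produces the desired Hamilton $(x,y)$-path in $G$.
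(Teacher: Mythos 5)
Your proposal is correct and follows essentially the same route as the paper: the paper also merges $x$ and $y$ into a single vertex (by adding the arc $xy$, pruning the other arcs at $x$ and $y$, and contracting) and then applies \Cref{thm:robust-expanders-hamilton} to the resulting digraph, which it asserts is a robust $(\nu/2,2\tau)$-out-expander. Your version merely makes explicit the degree and robust-expansion bookkeeping that the paper leaves as ``easy to check,'' and orients the lifted path from $x$ to $y$ rather than from $y$ to $x$, which is immaterial.
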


		\begin{proof}
			Given vertices $x$ and $y$, form $G'$ by adding the edge $xy$ to $G$, removing the edge $yx$ (if it exists), and removing all the edges directed toward $y$ or away from $x$.
			Next, form $G''$ by contracting the edge $xy$. It is easy to check that $G''$ is a robust $(\nu/2, 2\tau)$-out-expander. Thus, by \Cref{thm:robust-expanders-hamilton}, it contains a Hamilton cycle. This cycle corresponds to a Hamilton cycle in $G'$ which contains the edge $xy$, which in turn corresponds to a Hamilton path in $G$ with ends $x$ and $y$.
		\end{proof}

		\begin{proof} [ of \Cref{lem:hamilton-cycle}]
			Let $G$ be a graph with $n$ vertices, maximum degree at most $d$, and average degree at least $d - \eta n$. Suppose $G$ has no $\zeta$-sparse cuts and is either $\gamma$-far-from-bipartite or bipartite with a bipartition $\{X, Y\}$.
			Let $W \subseteq V(G)$ be a set of size at most $\xi n$, which satisfies $|X \setminus W| = |Y \setminus W|$ if $G$ is bipartite.
			Define $H = G \setminus W$, $V = V(H)$, and, if $G$ is bipartite, then also define $X' = X \setminus W$ and $Y' = Y \setminus W$.
			Then $H$ has maximum degree at most $d$, average degree at least $d - 2\xi n$, and no $\zeta/2$-sparse cuts.

			The following claim will allow us to use \Cref{cor:robust-expanders-hamilton} above. 

			\begin{claim} \label{claim:robust-nbd}
				Let $S \subseteq V$ be a set satisfying $\xi^{1/7} n \le |S| \le (1 - \xi^{1/7})n$ if $G$ is $\gamma$-far-from-bipartite, or $\xi^{1/7} n \le |S| \le (1/2 - \xi^{1/7})n$ if $G$ is bipartite.
				Then $|\RN_{\xi, H}(S)| \ge |S| + \xi n$.
			\end{claim}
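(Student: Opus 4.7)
The plan is to argue by contradiction, following the lines of \cite[Lemma~2.2]{gruslys2021cycle}. Suppose $|R| < |S| + \xi n$ where $R \coloneqq \RN_{\xi, H}(S)$, and set $A \coloneqq V(H) \setminus R$. By definition every $v \in A$ satisfies $d_H(v, S) < \xi n$, and hence $e_H(S, A) < \xi n \cdot |A| \le \xi n^2$. This `almost no edges from $S$ to $A$' inequality is the only input we will use from the failure of robust expansion; the goal is to contradict either the no-$\zeta/2$-sparse-cut property or the bipartite/far-from-bipartite structure of $H$.

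First I would record that $H$ inherits the relevant quantitative properties from $G$: its maximum degree is still at most $d$, its average degree is at least $d - O(\xi n)$ (removing $W$ with $|W| \le \xi n$ destroys only $O(\xi n d)$ edge-endpoints), it has no $\zeta/2$-sparse cuts (a routine consequence of the corresponding property of $G$), and in the non-bipartite case it is $\gamma/2$-far-from-bipartite (since at most $\xi n^2$ edges are lost with $W$, while $G$ is $\gamma$-far-from-bipartite and $\xi \ll \gamma$).

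In the bipartite balanced case, I would let $\{X', Y'\}$ be the bipartition of $H$ and set $S_X = S \cap X'$, $S_Y = S \cap Y'$. Since $H$ is bipartite, $R \cap X' = \{v \in X' : d_H(v, S_Y) \ge \xi n\}$ and $R \cap Y'$ is determined symmetrically by $S_X$. The key estimate is the degree-sum identity $e_H(S_X, Y') = \sum_{v \in S_X} d_H(v) \ge d \cdot |S_X| - O(\xi n^2)$, obtained from the average-degree lower bound on $H$; combined with the upper bound $e_H(S_X, Y') \le d \cdot |R \cap Y'| + \xi n \cdot |Y' \setminus (R \cap Y')|$ it yields $|R \cap Y'| \ge |S_X| - O(\xi n/c)$, and a symmetric argument gives $|R \cap X'| \ge |S_Y| - O(\xi n/c)$. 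The strict gain $+\xi n$ is extracted by additionally applying the no-$\zeta/2$-sparse-cut hypothesis to $\{S_X, V(H) \setminus S_X\}$ and $\{S_Y, V(H) \setminus S_Y\}$: the assumption $|S| \le (1/2 - \xi^{1/7}) n$ together with $|X'| = |Y'|$ forces $|X' \setminus S_X|, |Y' \setminus S_Y| \ge \xi^{1/7} n / 2$, and these cuts must carry enough edges to tighten each of the above estimates by $\Omega(\xi n)$.

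In the $\gamma$-far-from-bipartite case, I would consider the partition $\{S \cup A,\, R \setminus S\}$ of $V(H)$. The no-$\zeta/2$-sparse-cut property gives $e_H(S \cup A, R \setminus S) \ge (\zeta/2) |S \cup A||R \setminus S|$, whereas the max-degree bound and $|R \setminus S| \le |R| < |S| + \xi n$ give $e_H(S \cup A, R \setminus S) \le d \cdot |R \setminus S| < d(|S| + \xi n)$. These force $|R \setminus S|$ to be tiny (of order $O(d/(\zeta n)) \cdot (|S| + \xi n)$ times a small factor), which in turn means the partition $\{S, V(H) \setminus S\}$ has the property that almost all edges leaving $S$ also leave into $A$, and hence the total number of edges of $H$ not crossing $\{S, V(H) \setminus S\}$ can be estimated as below $\gamma n^2/2$, contradicting $H$ being $\gamma/2$-far-from-bipartite. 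The hierarchy $\xi \ll \gamma \ll \zeta \ll c$ absorbs the error terms throughout.

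The main obstacle is closing the $+\xi n$ gap in the bipartite case: the straightforward degree-counting argument only yields $|R| \ge |S| - O(\xi n/c)$, which is $O(\xi n)$ \emph{shy} of the required bound. Bridging this gap requires carefully leveraging the no-sparse-cut property together with the $|S| \le (1/2 - \xi^{1/7}) n$ assumption, and in the degenerate regime when one of $|S_X|, |S_Y|$ is very small one has to replace $S$ by $V(H) \setminus S$ and exploit the fact that the complement has the same structure. In the far-from-bipartite case a similar complementation handles the boundary $|S|$ close to $(1-\xi^{1/7})n$.
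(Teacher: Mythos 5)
Your opening move coincides with the paper's: assume $|R| < |S| + \xi n$ for $R \coloneqq \mathrm{RN}_{\xi,H}(S)$ and exploit that every vertex outside $R$ has fewer than $\xi n$ neighbours in $S$. But both of your case analyses have genuine gaps, and the key structural idea of the actual proof is missing.

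In the bipartite case you yourself concede that degree counting only yields $|R| \ge |S| - O(\xi n/c)$, and you propose to recover the missing $O(\xi n/c) + \xi n$ by applying the no-sparse-cut hypothesis to $\{S_X, V(H)\setminus S_X\}$ and $\{S_Y, V(H)\setminus S_Y\}$. That is exactly the step you have not supplied, and those cuts do not appear to help: they only assert that many edges leave $S_X$, which is already implied by the degree information and does not improve the lower bound on $|R\cap Y'|$ by an additive $\Omega(\xi n/c)$ (for instance, when $d$ is much larger than $\zeta n$, the cut bound $e(S_X,Y')\ge(\zeta/2)|S_X||V(H)\setminus S_X|$ is weaker than $e(S_X,Y')\ge d|S_X|-O(\xi n^2)$). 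The cut that actually works is a different one, namely $\{S_1\cup T_1,\, V(H)\setminus(S_1\cup T_1)\}$ with $S_1 \coloneqq S\setminus R$ and $T_1 \coloneqq R\setminus S$: this cut carries at most $5\xi n^2$ edges, because edges from $S_1$ to anything other than $T_1$ land in $S$ or in $V\setminus(S\cup R)$, both of which consist of vertices with fewer than $\xi n$ neighbours in $S$, and edges from $T_1$ to anything other than $S_1$ are few since $d|T_1| < d|S_1| + d\xi n$ while $e(S_1,T_1)\ge d|S_1|-3\xi n^2$. The no-sparse-cut property then forces one of the two sides to have at most $\xi^{1/3}n$ vertices, and each alternative is refuted using the size constraints on $S$ (the upper bound $|S|\le(1/2-\xi^{1/7})n$ enters only in the second alternative).

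In the far-from-bipartite case your argument is incorrect as written. From $(\zeta/2)|S\cup A||R\setminus S| \le e_H(S\cup A, R\setminus S) \le d|R\setminus S|$ you can only conclude $|S\cup A|\le 2d/\zeta$, which is vacuous since $d$ may be comparable to $n \le 2d/\zeta$; combining with $e_H(S\cup A, R\setminus S) < d(|S|+\xi n)$ gives at best $|R\setminus S| = O\big((|S|+\xi n)/(\zeta\xi^{1/7})\big)$, which is not "tiny" — it exceeds $|S|$. Worse, even granting that $|R\setminus S|$ were small, contradicting far-from-bipartiteness via the partition $\{S, V(H)\setminus S\}$ cannot work: the failure of robust expansion gives no control whatsoever on the edges \emph{inside} $S$ (which could induce a clique), so the non-crossing edges of that partition are unbounded. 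The paper instead uses the bipartition $\{S_1, T_1\}$ (after discarding the at most $\xi^{1/3}n$ remaining vertices and $W$), for which the edges inside $S_1$ and inside $T_1$ are bounded by exactly the two sparse-cut inequalities described above. You would need to restructure both cases around the sets $S\setminus R$ and $R\setminus S$ to complete the proof.
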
	

			\begin{proof}
				Define $S_1 = S \setminus \RN_{\xi }(S)$, $S_2 = S \cap \RN_{\xi }(S)$, $T_1 = \RN_{\xi }(S) \setminus S$, $T_2 = V \setminus (S \cup T_1)$. Assume, towards a contradiction, that $|\RN_{\xi}(S)| < |S| + \xi n$, which implies that $|T_1| < |S_1| + \xi n$.

				Given sets $A, B \subseteq V$, denote by $e(A, B)$ the number of ordered pairs $ab$ such that $ab$ is an edge of $H$ and $a \in A, b \in B$.
				Then
				\begin{equation} \label{eqn:robust-a}
					e(S_1, V \setminus T_1) \le e(S_1, S) + e(S_1, T_2) \le (|S_1| + |T_2|) \cdot \xi n \le \xi n^2,
				\end{equation}
				using that the vertices in $S_1 \cup T_2$ are not in $\RN_{\xi}(S)$.
				By the maximum and average degree assumption on $H$, we have
				\begin{equation*}
					e(S_1, T_1) 
					= \sum_{v \in S_1}d_H(v) - e(S_1, V \setminus T_1)
					\ge d|S_1| - 2 \xi n^2 - \xi n^2
					= d|S_1| - 3\xi n^2.
				\end{equation*}
				As $|T_1| < |S_1| + \xi n$, we obtain the following bound on $e(T_1, V \setminus S_1)$.
				\begin{equation} \label{eqn:robust-b}
					e(T_1, V \setminus S_1) \le d|T_1|  - e(S_1, T_1) 
					\le d(|T_1| - |S_1|) + 3 \xi n^2 
					\le 4 \xi n^2.
				\end{equation}
				Consider the quantity $e(S_1 \cup T_1, V \setminus (S_1 \cup T_1))$. 
				By \eqref{eqn:robust-a} and \eqref{eqn:robust-b}, this quantity is at most $5 \xi n^2$, and since $H$ has no $\zeta/2$-sparse cuts, it is at least $(\zeta/2) \cdot |S_1 \cup T_1| \cdot |V \setminus (S_1 \cup T_1)|$.
				As $\xi \ll \zeta$, we find that either $|S_1 \cup T_1| \le \xi^{1/3} n$ or $|V \setminus (S_1 \cup T_1)| \le \xi^{1/3} n$.

				First, suppose that $|S_1 \cup T_1| \le \xi^{1/3} n$. Then
				\begin{equation*}
					e(S_2, V \setminus S_2) \le e(S_1 \cup T_1, V) + e(S_2, T_2) \le \xi^{1/3} n^2 + |T_2| \xi n \le 2\xi^{1/3}n^2,
				\end{equation*}
				using that $|S_1 \cup T_1| \le \xi^{1/3} n$ and $T_2 \cap \RN_{\xi}(S_2) = \emptyset$.
			However, since $H$ does not have $\zeta/2$-sparse cuts and given the assumption of the size of $S$, we have \begin{equation*}
					e(S_2, V \setminus S_2) \ge (\zeta/2) \cdot |S_2| \cdot |V \setminus S_2|
					\ge \frac{\zeta}{2} \cdot (\xi^{1/7}n - |S_1|) \cdot \frac{\xi^{1/7}n}{2}
					\ge \frac{\zeta}{2} \cdot \frac{\xi^{2/7}n^2}{4} 
					> 2\xi^{1/3}n^2,
				\end{equation*}
				a contradiction.

				Next, suppose that $|V \setminus (S_1 \cup T_1)| \le \xi^{1/3} n$. If $G$ is bipartite, then using that $|T_1| < |S_1| + \xi n$ and $|S| \le (1/2 - \xi^{1/7}) n$, we have 
				\begin{align*}
					|V| = |V \setminus (S_1 \cup T_1)| + |S_1 \cup T_1|
					& < \xi^{1/3}n + |S_1| + (|S_1| + \xi n)  \\
					& \le \xi^{1/3}n+ n - 2\xi^{1/7}n + \xi n 
					< n - \xi n \le n - |W|,
				\end{align*}
				a contradiction. Hence, we may assume that $G$ is $\gamma$-far-from-bipartite. Note that $G$ can be made bipartite by removing edges incident to $W \cup S_2 \cup T_2$ or within $S_1$ or $T_1$. However, there are at most $(\xi + \xi^{1/3})n^2$ edges of the former type (since $|W| \le \xi n$ and $|V \setminus (S_1 \cup T_1)| = |S_2 \cup T_2| \le \xi^{1/3} n$) and at most $5 \xi n^2$ edges of the latter type (by \eqref{eqn:robust-a} and \eqref{eqn:robust-b}), so there are fewer than $\gamma n^2$ edges in total (using $\xi \ll \gamma$). This contradicts the assumption that $G$ is $\gamma$-far-from-bipartite, thus completing the proof of the claim.
			\end{proof}

            It follows from \Cref{claim:robust-nbd} that $H$ is a robust $(\xi, 2\xi^{1/7})$-expander. Let $x, y \in V$, where $x \in X$ and $y \in Y$ if $G$ is bipartite.
			Our goal is to show that $H$ contains a Hamilton path with ends $x$ and $y$. First, consider the case where $G$ is $\gamma$-far-from-bipartite. Form a digraph $D$ by replacing each edge $uv$ of $H$ with the two directed edges $uv$ and $vu$. It follows from \Cref{claim:robust-nbd} that $D$ is a robust $(\xi, 2\xi^{1/7})$-out-expander. Then, \Cref{cor:robust-expanders-hamilton} implies the existence of a Hamilton path in $D$ with ends $x$ and $y$, which corresponds to a Hamilton path in $H$ with the same ends.

			Now, suppose that $G$ is bipartite. Note that in this case $H$ is a balanced bipartite graph (by our choice of the set $W$). We first show that $H$ has a perfect matching by verifying Hall's condition i.e., we will show that for every $S \subseteq X'$, we have $|N_H(S)| \geq |S|$. Notice that $H$ has minimum degree at least $\zeta (n-1) - \xi n \ge \zeta n / 2$, by the assumption that $G$ has no $\zeta$-sparse cuts and by $|W| \le \xi n$.
			Thus, if $|S| < \zeta n / 2$ (and $S \neq \emptyset$) then $|N_H(S)| \ge \zeta n / 2 \ge |S|$.
			Similarly, if $|S| > |V|/2 - \zeta n / 2$ then $|N_H(S)| = |V|/2 \ge |S|$.
			Hence, we may assume that $\zeta n / 2 \le |S| \le |V|/2 - \zeta n / 2$, which implies that $\xi^{1/7}n \le |S| \le (1/2 - \xi^{1/7})n$, but then \Cref{claim:robust-nbd} implies that $|N_H(S)| \ge |S|$, showing that $H$ has a perfect matching.
            
            Let $\{a_1 b_1, \ldots, a_t b_t\}$ be a perfect matching in $H$, where $t = |X'|$ and $a_i \in X', b_i \in Y'$ for $i \in [t]$. We assume for convenience that $a_i b_i$ is not the edge $xy$ (if the edge $xy$ exists) for $i \in [t]$ -- this is possible because removal of the edge $xy$ from $H$ does not affect the arguments above. Without loss of generality, $a_1 = x$ and $b_t = y$.
			Form an auxiliary directed graph $D$ with vertex set $\{v_1, \ldots, v_t\}$ where $v_i v_j$ is a directed edge whenever $b_i a_j$ is an edge of $H$. It follows from \Cref{claim:robust-nbd} that $D$ is a robust $(\xi, 3\xi^{1/7})$-out-expander. Then, by \Cref{cor:robust-expanders-hamilton}, there is a Hamilton path in $D$ starting at $v_1$ and ending at $v_t$. Without loss of generality, suppose that this path is $v_1, \ldots, v_t$. This path corresponds to the Hamilton path $x = a_1, b_1, \ldots, a_t, b_t = y$ in $H$, as desired. 
		\end{proof}
\end{document}